\newcommand{\x}{\mathbf{x}}
\newcommand{\y}{\mathbf{y}}
\newcommand{\bb}{\mathbf{b}}
\newcommand{\cc}{\mathbf{c}}
\newcommand{\0}{\mathbf{0}}
\newcommand{\ds}{\displaystyle}
\crefname{secinapp}{Section}{Sections}
\Crefname{secinapp}{Section}{Sections}
\newlength{\dhatheight}
\newcommand{\newsiamthmnn}[2]{
  \theoremstyle{nonumberplain}
  \theoremheaderfont{\normalfont\sc}
  \theorembodyfont{\normalfont\itshape}
  \theoremseparator{.}
  \theoremsymbol{}
  \newtheorem{#1}[theorem]{#2}
}
\title{Resistance distance in straight linear 2-trees}
\author{Wayne Barrett \thanks{Department of Mathematics,
  Brigham Young University,
  Provo, Utah 84602, USA\funding{Supported by the Defense Threat Reduction Agency -- Grant Number HDTRA1-15-1-0049}}
  \and
Emily. J. Evans \thanks{Department of Mathematics,
  Brigham Young University,
  Provo, Utah 84602, USA\funding{Supported by the Defense Threat Reduction Agency -- Grant Number HDTRA1-15-1-0049}}
  \and
Amanda E. Francis \thanks{Department of Mathematics, Engineering, and Computer Science, 
  Carroll College,
  Helena, Montana 59625, USA\funding{Supported by the Defense Threat Reduction Agency -- Grant Number HDTRA1-15-1-0049}}}
\begin{document}
\maketitle
%\begin{abstract}
%We consider the graph $G_n$ with vertex set $V(G) = \{ 1, 2, \ldots, n\}$ and $\{i,j\} \in E(G_n)$ if and only if $0<|i-j| \leq 2$.  
%We call $G_n$ the straight linear 2-tree on $n$ vertices.   Using $\Delta$--Y transformations and identities for the Fibonacci and Lucas numbers we obtain two explicit formulas for the resistance distance $r_{G_n}(j,k)$ between any two vertices $j$ and $k$ of $G_n$. To our knowledge $\{G_n\}_{n=3}^\infty$ is the first nontrivial family with diameter going to $\infty$ for which all resistance distances have been explicitly calculated.  An important consequence of one formula is that for $1 \leq j' \leq j \leq k \leq k' \leq n$, $r_n(j,k) \leq r_n(j',k')$, from which the intuitively plausible result $r_n(1,n) = \max\{r_n(i,j): \ 1 \leq i < j \leq n\}$ follows.  Moreover, we show that $r_{n+1}(1,n+1) - r_n(1,n) \to \frac{1}{5}$ as $n \to \infty$, establishing that $r_n(1,n) \to \infty$ as $n \to \infty$. Consequently, $G_n$ is a geometric graph with entirely different properties than the random geometric graphs investigated in \cite{lostinspace}.  This gives preliminary evidence that resistance distance may yet be a viable method for link prediction, machine learning, etc. for certain subclasses of geometric graphs. 
%\end{abstract}
\begin{abstract}
%We consider the graph $G_n$ with vertex set $V(G) = \{ 1, 2, \ldots, n\}$ and $\{i,j\} \in E(G_n)$ if and only if $0<|i-j| \leq 2$.  
%We call $G_n$ the straight linear 2-tree on $n$ vertices.   Using $\Delta$--Y transformations and identities for the Fibonacci and Lucas numbers we obtain an explicit formula for the resistance distance $r_{G_n}(j,k)$ between any two vertices $j$ and $k$ of $G_n$. To our knowledge $\{G_n\}_{n=3}^\infty$ is the first nontrivial family with diameter going to $\infty$ for which all resistance distances have been explicitly calculated. Our result gives formula for the number of spanning trees and 2-forests in a straight linear 2-tree. Another consequence of our result is that for $1 \leq j' \leq j \leq k \leq k' \leq n$, $r_n(j,k) \leq r_n(j',k')$, from which the intuitively plausible result $r_n(1,n) = \max\{r_n(i,j): \ 1 \leq i < j \leq n\}$ follows.  Moreover, we show that $r_{n+1}(1,n+1) - r_n(1,n) \to \frac{1}{5}$ as $n \to \infty$, establishing that $r_n(1,n) \to \infty$ as $n \to \infty$. Consequently, $G_n$ is a geometric graph with entirely different properties than the random geometric graphs investigated in \cite{lostinspace}.  This gives preliminary evidence that resistance distance may yet be a viable method for link prediction, machine learning, etc. for certain subclasses of geometric graphs. \\
%
We consider the graph $G_n$ with vertex set $V(G_n) = \{ 1, 2, \ldots, n\}$ and $\{i,j\} \in E(G_n)$ if and only if $0<|i-j| \leq 2$.  
We call $G_n$ the straight linear 2-tree on $n$ vertices.   Using $\Delta$--Y transformations and identities for the Fibonacci and Lucas numbers we obtain explicit formulae for the resistance distance $r_{G_n}(i,j)$ between any two vertices $i$ and $j$ of $G_n$. To our knowledge $\{G_n\}_{n=3}^\infty$ is the first nontrivial family with diameter going to $\infty$ for which all resistance distances have been explicitly calculated. Our result also gives formulae for the number of spanning trees and 2-forests in a straight linear 2-tree.  We show that the maximal resistance distance in $G_n$ occurs between vertices 1 and $n$ and the minimal resistance distance occurs between vertices $n/2$ and $n/2+1$ for $n$ even (with a similar result for $n$ odd).  It follows that $r_n(1,n) \to \infty$ as $n \to \infty$.  Moreover, our explicit formula makes it possible to order the non-edges of $G_n$ exactly according to resistance distance, and this ordering agrees with the intuitive notion of distance on a graph. Consequently, $G_n$ is a geometric graph with entirely different properties than the random geometric graphs investigated in [6].  These results for straight linear 2-trees along with an example of a bent linear 2-tree and empirical results for additional graph classes convincingly demonstrate that resistance distance should not be discounted as a viable method for link prediction in geometric graphs.  %The explicit formulae for resistance distance enable one to linearly order all resistance distances for the non-edges of a straight linear 2-tree reducing the link prediction problem for this graph to a trivial exercise.  Since link prediction is notoriously difficult for large real-life networks, the enterprise of obtaining exact results for highly structured, yet arbitrarily large, networks could be a way to gain considerable insight into this challenging problem
\end{abstract}

\begin{keyword}
effective resistance, resistance distance, 2--tree, spanning tree
\end{keyword}

\section{Introduction}
The resistance distance (also known as the effective resistance) of a graph is one measure of quantifying structural properties of a given graph. % that has recently increased in popularity.  
%The origin of the resistance distance is Kirchoff's  law, and in fact it is frequently called the Kirchoff index. 
Roughly speaking the resistance distance between two nodes on a graph is determined by considering the graph as an electrical circuit with edges being represented as resistors.  Using the standard laws of electrical conductance, the resistance distance of the graph can be determined.  For undirected graphs, a well known result links the resistance distance between two nodes to the Moore-Penrose inverse of the graph Laplacian. It is also well known that the resistance distance of a graph is directly related to the commute time in a graph.  For a few graphs, such as the wheel and the fan, resistance distance between any two vertices has been calculated explicitly as a function of the number of vertices in the graph~\cite{BapatWheels, YangKlein}.

The resistance distance has been used by several authors to perform operations on graphs and to quantify graph behavior.   Spielman and Srivastava~\cite{SpielSparse} have used resistance distance between nodes of graphs to develop an algorithm to rapidly sparsify a given graph while maintaining spectral properties.  Resistance distance is also seen in the field of distributed control and estimation.  In particular, one problem in this field is the estimation of several variables in the presence of noisy data. This is of particular interest in the design and operation of sensor arrays~\cite{Barooah06grapheffective}.

Recently, concerns were raised that resistance distance fails a number of desirable properties of a distance function for certain random geometric graphs \cite{lostinspace}. For these graphs they obtain the asymptotic result that 
\begin{equation}\label{eqn_LIS}
r_G(i,j) \approx \frac{1}{\deg(i)}  + \frac{1}{\deg(j)}.
\end{equation}
Since the value of $r_G(i,j)$ here depends only on the degrees of vertices $i$ and $j$, they conclude that $r_G(i,j)$ is completely meaningless as a distance function on these large geometric graphs. 

Of course, the preceding result does not hold for some classes of graphs.  For trees $r_G(i,j) = d_G(i,j)$, so $r_G(i,j)$ is still a distance function.  Although resistance distance has been calculated for a number of special graphs \cite{BapatWheels, YangKlein}, there seems to be a paucity of results for infinite classes of graphs. (Some exceptions are paths, wheels, and fans.) We investigate another infinite class of 2-trees in this paper for which $r_G(i,j)$ retains all desirable properties of a distance function. 

\begin{definition}[straight linear 2-tree]\label{def:lin2treest}
A straight linear 2-tree is a graph $G_n$ with $n$ vertices with adjacency matrix that is symmetric, banded, with the first and second subdiagonals equal to one, and first and second superdiagonals equal to one, and all other entries equal to zero. See Figure~\ref{fig:2tree}. \end{definition} 

\begin{figure}[h!]
\begin{center}

\begin{tikzpicture}[line cap=round,line join=round,>=triangle 45,x=1.0cm,y=1.0cm,scale = 1.2]
\draw [line width=1.pt] (-3.,0.)-- (-2.,0.);
\draw [line width=1.pt] (-2.,0.)-- (-1.,0.);
\draw [line width=1.pt,dotted] (-1.,0.)-- (0.,0.);
\draw [line width=1.pt] (0.,0.)-- (1.,0.);
\draw [line width=1.pt] (1.,0.)-- (2.,0.);
\draw [line width=1.pt] (2.,0.)-- (1.5,0.866025403784435);
\draw [line width=1.pt] (1.5,0.866025403784435)-- (1.,0.);
\draw [line width=1.pt] (1.,0.)-- (0.5,0.8660254037844366);
\draw [line width=1.pt] (0.5,0.8660254037844366)-- (0.,0.);
%\draw [line width=1.pt,dotted] (0.,0.)-- (-0.5,0.8660254037844378);
\draw [line width=1.pt] (-0.5,0.8660254037844378)-- (-1.,0.);
\draw [line width=1.pt] (-1.,0.)-- (-1.5,0.8660254037844385);
\draw [line width=1.pt] (-1.5,0.8660254037844385)-- (-2.,0.);
\draw [line width=1.pt] (-2.,0.)-- (-2.5,0.8660254037844388);
\draw [line width=1.pt] (-2.5,0.8660254037844388)-- (-3.,0.);
\draw [line width=1.pt] (-2.5,0.8660254037844388)-- (-1.5,0.8660254037844385);
\draw [line width=1.pt] (-1.5,0.8660254037844385)-- (-0.5,0.8660254037844378);
\draw [line width=1.pt,dotted] (-0.5,0.8660254037844378)-- (0.5,0.8660254037844366);
\draw [line width=1.pt] (0.5,0.8660254037844366)-- (1.5,0.866025403784435);
\begin{scriptsize}
\draw [fill=black] (-3.,0.) circle (1.5pt);
\draw[color=black] (-3.02279181666165,-0.22431183338253265) node {$1$};
\draw [fill=black] (-2.,0.) circle (1.5pt);
\draw[color=black] (-2.0001954862580344,-0.22395896857501957) node {$3$};
\draw [fill=black] (-2.5,0.8660254037844388) circle (1.5pt);
\draw[color=black] (-2.5018465162673555,1.100526386601008717) node {$2$};
\draw [fill=black] (-1.5,0.8660254037844385) circle (1.5pt);
\draw[color=black] (-1.5081915914412003,1.100526386601008717) node {$4$};
\draw [fill=black] (-1.,0.) circle (1.5pt);
\draw[color=black] (-1.0065405614318794,-0.22290037415248035) node {$5$};
\draw [fill=black] (-0.5,0.8660254037844378) circle (1.5pt);
\draw[color=black] (-0.4952423962300715,1.100526386601008717) node {$6$};
\draw [fill=black] (0.,0.) circle (1.5pt);
\draw[color=black] (-0.03217990699069834,-0.22431183338253265) node {$n-4$};
\draw [fill=black] (0.5,0.8660254037844366) circle (1.5pt);
\draw[color=black] (0.4887653934035965,1.103344389715898) node {$n-3$};
\draw [fill=black] (1.,0.) circle (1.5pt);
\draw[color=black] (0.9904164234129174,-0.22431183338253265) node {$n-2$};
\draw [fill=black] (1.5,0.866025403784435) circle (1.5pt);
\draw[color=black] (1.5692445349621338,1.1042991524908385) node {$n-1$};
\draw [fill=black] (2.,0.) circle (1.5pt);
\draw[color=black] (1.993718483431559,-0.22431183338253265) node {$n$};
\end{scriptsize}
\end{tikzpicture}
%\begin{tikzpicture}[auto,node distance=1.5cm, thick,main node/.style={circle,draw, fill=red!10!}]
%\node[main node] (1) [] {n};
%\node[main node] (2) [above left of=1] {n-1};
%\node[main node] (3) [below left of=2] {n-2};
%\node[main node] (4) [above left of=3] {n-3};
%\node[main node] (5) [below left of=4] {n-4};
%\node[main node] (6) [above left of=5] {6};
%\node[main node] (7) [below left of=6] {5};
%\node[main node] (8) [above left of=7] {4};
%\node[main node] (9) [below left of=8] {3};
%\node[main node] (10) [above left of=9] {2};
%\node[main node] (11) [below left of=10] {1};
%\foreach \s/\t in {2/1, 2/3, 3/1, 2/4, 3/5,3/4, 4/5, 6/7, 6/8, 7/8, 7/9, 8/9, 8/10, 9/10, 10/11, 9/11} {\path[draw] (\s) edge (\t);}  
%\foreach \s/\t in {4/6,5/7,5/6} {\path[draw,dotted] (\s) edge (\t);}  
%
%
%\end{tikzpicture}\\
\end{center}
\caption{A straight linear 2-tree}
\label{fig:2tree}
\end{figure}
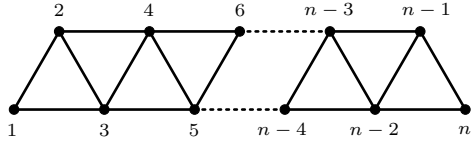

\begin{remark} We observe that $G_n$ is a geometric graph. This can easily be seen by placing the vertices so that all of the triangles in Figure~\ref{fig:2tree} are equilateral.
\end{remark}

Our main result is: 

\begin{main1} 
Let $G_n$ be the straight linear 2-tree on $n$ vertices labeled as in Figure \ref{fig:2tree}, and let $m = n-2$ be the number of triangles in $G_n$.  Then for any two vertices $j$ and $j+k$ of $G_n$, 
\[
r_m(j,j+k) = \frac{\sum_{i=1}^{k} \left( F_iF_{i+2j-2} - F_{i-1}F_{i+2j-3}\right)F_{2m-2i-2j+5}}{F_{2m+2}},
\] or in closed form 
	 \[r_m(j,j+k)=\frac{F_{m+1}^2+F_k^2F_{m-2j-k+3}^2+\frac{F_{m+1}}{5}\left[F_{m-k}(kL_k-F_k)+F_{m-k+1}\left((k-5)F_{k+1}+(2k+2)F_{k}\right)\right]}{F_{2m+2}}\]
\noindent where $F_p$ is the $p$th Fibonacci number and $L_k$ is the $k$th Lucas number. 
\end{main1}

An important consequence of the first formula is Corollary~\ref{cor:monotonicity}, which verifies that resistance distance behaves as a distance function.  There is one additional property that $r_m(j,j+k)$ should have to be a genuine distance function, namely $r_m(j,j+k)$ should increase without bound as $k \to \infty$. 

It is easy to see from the second formula that  
\begin{main2}
Let $G_n$ and $G_{n+1}$ be the straight linear 2-trees on $n$ and $n+1$ vertices respectively.  Then 
\[
\lim_{n \to \infty} [r_{m+1}(1, n+1) - r_m(1,n)] = \frac{1}{5}.
\]
In particular, $\displaystyle{\lim_{n \to \infty} r_m(1,n) = \infty}$. 

\end{main2}
%This establishes that $r_m(i,j)$ behaves exactly as a distance function should on the straight linear 2-tree $G_n$ with $n$ vertices. 

Now suppose $H_n$ is any connected subgraph of $G_n$ containing the two degree two vertices 1 and $n$. Recall Rayleigh's monotonicity law \cite[Lemma D]{KleinRandic} which states that if $P$ is an electrical circuit with a resistance on each edge, and a new circuit $K$ is created from $P$ by lowering the resistance on an existing edge, or inserting a new edge, then $r_K(i,j) \leq r_H(i,j)$ for
 all vertices $i$ and $j$ of $H$.  Applying this law we have the following result
\[
r_{H_n}(1,n) \geq r_{G_n}(1,n).
\]
So, as a consequence 
\begin{corollary}
Let $G_n$ be the linear 2-tree in Figure \ref{fig:2tree} and let $H_n$ be any connected subgraph of $G_n$ containing vertices 1 and $n$.  Then 
\[
\lim_{n \to \infty} r_{H_n}(1,n) = \infty. 
\]

\end{corollary}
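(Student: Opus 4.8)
The plan is to combine Theorem~\ref{cor:onefifth} with Rayleigh's monotonicity law, exactly as foreshadowed in the paragraph preceding the statement; essentially no computation is needed. First I would record the monotonicity input in the form I actually need. Since $H_n$ is a connected subgraph of $G_n$ whose vertex set is contained in $V(G_n)$, the graph $G_n$ can be obtained from $H_n$ by a finite sequence of two kinds of moves: (i) inserting one of the missing edges, and (ii) introducing one of the missing vertices $v$ together with its incident edges in $G_n$. Move (i) is covered verbatim by \cite[Lemma D]{KleinRandic}. Move (ii) is handled by the same lemma after a trivial reduction: view $v$ as already present but joined to the rest of the network only through edges of infinite resistance (equivalently, absent), and then lower those resistances to $1$; by Rayleigh monotonicity this does not increase the effective resistance between any pair of vertices already present. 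Iterating moves (i) and (ii) along this sequence yields
\[
r_{G_n}(1,n) \le r_{H_n}(1,n).
\]

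Next I would invoke Theorem~\ref{cor:onefifth}. With $m=n-2$ triangles we have $r_{G_n}(1,n)=r_m(1,n)$ in the notation of the theorem, and Theorem~\ref{cor:onefifth} asserts $\lim_{n\to\infty} r_m(1,n)=\infty$. Combining this with the previous display gives
\[
\liminf_{n\to\infty} r_{H_n}(1,n) \ \ge\ \lim_{n\to\infty} r_{G_n}(1,n) \ =\ \infty ,
\]
hence $r_{H_n}(1,n)\to\infty$, which is the claim. (Connectedness of $H_n$ is used only to guarantee that $r_{H_n}(1,n)$ is finite, so that the statement is not vacuous; if $H_n$ were disconnected with $1$ and $n$ in different components the conclusion would hold trivially.)

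The argument is immediate once Theorem~\ref{cor:onefifth} is available, so there is no real obstacle. The only point that warrants a sentence of care is the application of Rayleigh's law when $H_n$ omits some vertices of $G_n$, since the version quoted in the text is phrased for a fixed vertex set (lowering resistances and inserting edges only). The reduction in move (ii) above — modeling an absent vertex as one attached through infinite resistances and then decreasing them — bridges that gap, and nothing further is required.
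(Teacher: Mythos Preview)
Your proposal is correct and follows essentially the same route as the paper: the paper simply notes (in the paragraph immediately preceding the corollary) that Rayleigh's monotonicity law gives $r_{H_n}(1,n)\ge r_{G_n}(1,n)$ and then invokes Theorem~\ref{cor:onefifth}. Your additional sentence handling the case where $H_n$ omits vertices of $G_n$ (via the ``infinite resistance then lower'' device) is a nice point of rigor that the paper leaves implicit.
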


In particular, neither the geometric graph $G_n$, nor any of its connected subgraphs containing vertices 1 and $n$, exhibits the behavior in Equation \ref{eqn_LIS}. 

We conjecture that similar asymptotic behavior holds for the straight linear $k$-trees and its subgraphs, where the straight linear $k$-tree is the graph with vertex set $\{1, 2, \ldots, n\}$ and $\{i,j\} \in E(G)$ if and only if $0<|i-j| \leq k$. This leads us to believe there are large classes of geometric graphs for which the results in \cite{lostinspace} do not hold. Further investigation is needed, but we believe that resistance distance still deserves consideration as an effective tool for link prediction and other applications. 

A major question is: Which real-life networks behave like subgraphs of straight linear 2-trees (or $k$-trees), and which behave like the random geometric graphs in \cite{lostinspace}? 

%In Lost in Space Paper the authors claim that the effective resistance for random geometric graphs is bounded from above by
%
%OPPS - I don't have the lost in space paper on my computer and since I am on an airplane and unwilling to fork over 20 dollars just to download it I guess I won't say anything.
%
%
%In this paper we consider a linear 2-tree graph as seen in Figure~\ref{fig:2tree} and show in this geometric graph the resistance distance is in fact unbounded.  In particular we show that if $G$ is a linear 2-tree with $n$ vertices and $H$ is the new linear 2-tree given by $H=(\{1, \ldots, n+1\}, E(G) \cup (n-1,n+1) \cup (n+1,n))$, then 
%	\[\lim_{n\rightarrow \infty} r_{H} (1, n+1) - r_{G}(1,n) = \frac{1}{5}.\]
%
%formally define a linear 2-tree, 
In Section~\ref{sec:note} we provide needed background, explain our main tool, the $\Delta$--Y transform, and cite important Fibonacci and Lucas number identities that will be used in the paper. In Section 3 we illustrate our method for determining resistance distance between any two nodes in a straight linear 2-tree by considering the simpler problem of determining the resistance distance between the extreme vertices.  Next, in Section 4 we give the proof of our main result.  In Section 5 we apply our main result to give an explicit formula for the number of spanning trees and 2-forests in a straight linear 2-tree. As we will show, the denominator in the formulae in Theorem~\ref{thm:main} is the number of spanning trees of $G_n$ matching the results for fans in \cite{BapatWheels}. Next, in Section 6, we use our main result to obtain a monotonicity result and apply this result to the link prediction problem.  Finally, in Section 7 we give some conclusions and some conjectures.

\section{Notation and Necessary Identities}\label{sec:note}
An undirected graph $G$ consists of a finite set $V$ called the vertices, a subset $E$ of two-element subsets of $V$ called the edges, and a set $w$ of positive weights associated with each edge in $G$.  Our focus is on connected simple graphs, although we occasionally allow a single multiple edge when calculating resistance using the parallel rule.  For convenience we usually take the vertex set $V$ to be $\{1, 2, \ldots, n\}$. 

The adjacency matrix of $G$, $A(G)$, is the $n\times n$ nonnegative symmetric matrix defined by 
\[
a_{ij} = \left\{ \begin{array}{cl} w(i,j) & \text{if } \{i,j\} \text{ is an edge of }G\\
0 & \text{otherwise} \end{array}\right. 
\]
and its Laplacian matrix, $L(G)$ is the $n\times n$ real symmetric matrix defined by 
\[
\ell_{ij} = \left\{ \begin{array}{cl} \deg(i) & \text{if } i = j \\
-w(i,j) & \text{if } i \neq j \text{ and } \{i,j\} \text{ is an edge of } G\\
0 & \text{otherwise,}\end{array}\right.
\]
where $\deg(i)$ is the sum of the edge weights of the edges incident to $i$.  The Laplacian matrix of a connected graph is a singular positive semidefinite matrix whose null space is spanned by the all ones vector. 

Now assume that the graph $G$ represents an electrical circuit with resistances on each edge.  The resistance on a weighted edge is the reciprocal of its edge weight. Given any two nodes $i$ and $j$ assume that one unit of current flows into node $i$ and one unit of current flows out of node $j$.  The potential difference $v_i - v_j$ between nodes $i$ and $j$ needed to maintain this current is the \emph{effective resistance} or \emph{resistance distance} between $i$ and $j$ since by Ohm's law, the resistance, $r_G(i,j)$ is 
\[
r_G(i,j) = \frac{v_i-v_j}{1}=v_i-v_j.
\]
The following method for calculating $r(i,j)$ works for every circuit.  Let $v_i$ be the potential at node $i$ and let $\mathbf{v} = [v_1, \ldots, v_n]^T$ be the $n$-vector of voltages.  The source vector of currents is $\mathbf{s} = \mathbf{e}_i -\mathbf{e}_j$.  Then by Kirchoff's law and Ohm's law, 
\[
L(G)\mathbf{v} = \mathbf{s}.
\]

It follows that for any generalized inverse $X$ of $L(G)$, $\mathbf{v} = X \mathbf{s}$. Then 
\[
r_G(i,j) ={v}_i - {v}_j = (\mathbf{e}_i - \mathbf{e}_j)^T X (\mathbf{e}_i - \mathbf{e}_j).
\]
Usually $X$ is taken to be $L^\dagger$, the Moore-Penrose inverse of $L(G)$, so we have 
\begin{equation}
r_G(i,j) = (\mathbf{e}_i - \mathbf{e}_j)^T L^\dagger (\mathbf{e}_i - \mathbf{e}_j).
\end{equation}
This formula works extremely well for small and intermediate sized graphs, but is difficult to apply both theoretically and computationally for very large graphs.  However, many other techniques can be employed to calculate resistance distance, including the well-known series and parallel rules and the $\Delta$--Y transformation. 

%The resistance distance has a number of properties that justify calling it a distance \cite{Bapatbook}.  Let $d_{G}(i,j)$ be the usual shortest path distance in a graph $G$.  Then in the case of unit resistances on each edge, 
%\[
%\begin{array}{l}
%r_G(i,j) \leq d_G(i,j) \text{ for all vertices }i, \ j \text{ in } G.\\[1mm]
%r_G(i,j) = d_G(i,j) \text{ if there is a unique path from  }i \text{ to } j \text{ in } G.\\[1mm]
%r_G(i,j) \text{ is a metric on } G\\
%\end{array}
%\]
%

% ALREADY DEFINED
%The adjacency matrix of the weighted connected undirected graph $G$ can be defined using the graph laplacian as
%\[A_G = \text{diag}{L_G} - L_G.\]
%Similar to the Laplacian matrix, we will omit the $G$ when the graph is clear from context.

%A \emph{linear 2-tree} on $n\geq 4$ vertices is a 2-tree with exactly two vertices of degree 2. We call the graph found in Figure \ref{fig:1} the straight linear 2-tree. 
%Instead of using the Moore-Penrose pseudo inverse of the graph Laplacian, in this paper we will use equivalent network transformations including series, parallel and $\Delta$--Y transformations to determine the maximal resistance distance between any two given vertices in a straight linear 2--tree. For reference we define these transformations below.

\begin{definition}[Series Transformation] Let $N_1$, $N_2$, and $N_3$ be nodes in a graph where $N_2$ is adjacent to only $N_1$ and $N_3$.  Moreover, let $R_A$ equal the resistance between $N_1$ and $N_2$ and $R_B$ equal the resistance between node $N_2$ and $N_3$.  A series transformation transforms this graph by deleting $N_2$ and setting the resistance between $N_1$ and $N_3$ equal to $R_A + R_B$.\end{definition}

\begin{definition}[Parallel Transformation] Let $N_1$ and $N_2$ be nodes in a multi-edged graph where $e_1$ and $e_2$ are two edges between $N_1$ and $N_2$ with resistances $R_A$ and $R_B$, respectively.   A parallel transformation transforms the graph by deleting edges $e_1$ and $e_2$ and adding a new edge between $N_1$ and $N_2$ with edge resistance $r = \left(\frac{1}{R_A} + \frac{1}{R_B}\right)^{-1}$.
\end{definition}

A $\Delta$--Y transformation is a mathematical technique to convert resistors in a triangle ($\Delta$) formation to an equivalent system of three resistors in a ``Y'' format as illustrated in Figure~\ref{fig:dy}.  We formalize this transformation below. 
\begin{definition}[$\Delta$--Y transformation]\label{def:dy}
Let $N_1, N_2, N_3$ be nodes and $R_A$, $R_B$ and $R_C$ be given resistances as shown in Figure~\ref{fig:dy}.  The transformed circuit in the ``Y'' format as shown in Figure~\ref{fig:dy} has the following resistances:
\begin{align}
  R_1 &= \frac{R_BR_C}{R_A + R_B + R_C} \\
  R_2 &= \frac{R_AR_C}{R_A + R_B + R_C} \\
  R_3 &= \frac{R_AR_B}{R_A + R_B + R_C}
\end{align}
\end{definition}
 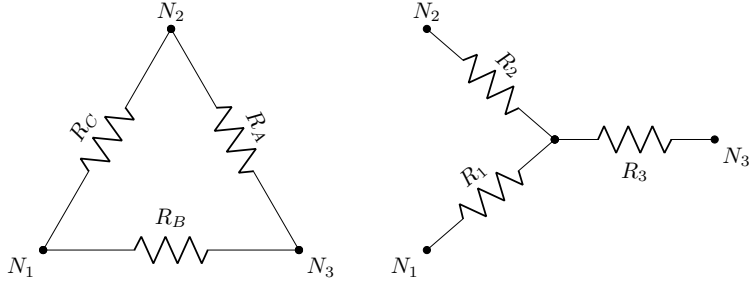
\begin{figure}
 \begin{center}
\resizebox{4in}{!}{ \begin{circuitikz}
 
 \draw (0,0) to [R, *-*,l=$R_B$] (4,0);
  \draw (0,0) to [R, *-*,l=$R_C$] (2,3.46);
  \draw (2,3.46) to[R, *-*,l=$R_A$] (4,0)
  {[anchor=north east] (0,0) node {$N_1$} } {[anchor=north west] (4,0) node {$N_3$}} {[anchor=south]  (2,3.46) node {$N_2$}};
  
  \draw (6,0) to [R, *-*,l=$R_1$] (8,1.73);
  \draw (6,3.46) to [R, *-*,l=$R_2$] (8,1.73);
 \draw (10.5,1.73) to [R, *-*,l=$R_3$] (8,1.73)
  {[anchor=north east] (6,0) node {$N_1$} } {[anchor=north west] (10.5,1.73) node {$N_3$}} {[anchor=south]  (6,3.46) node {$N_2$}};
 %   \draw (4,0) to [R] (4,4);
 %    --
 %  to[R] (1,1)   to[R] (0,2) --
 % (3,1) to[R] (2,2)
 ; 
 \end{circuitikz}}
 \end{center}
 \caption{ $\Delta$ and $Y$ circuits with vertices labeled as in Definition~\ref{def:dy}.}
 \label{fig:dy}
 \end{figure}
\begin{proposition} Series transformations, parallel transformations, and $\Delta$--Y transformations yield equivalent circuits.
\end{proposition}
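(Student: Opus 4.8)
The plan is to interpret ``equivalent circuit'' in the only way that matters for this paper: two circuits are equivalent when they induce the same current--voltage relation, and hence the same resistance distance $r_G(i,j)$, at every node common to both. Since those relations are encoded by $L(G)\mathbf{v} = \mathbf{s}$, the unifying tool is the fact that \emph{Kron reduction} (the Schur complement of the Laplacian) of a node carrying no injected current leaves the $I$--$V$ behavior at the remaining nodes unchanged: Kirchhoff's current law at such a node $N_0$ forces its potential to be the conductance-weighted average of its neighbors' potentials, and eliminating that variable produces a smaller Laplacian whose solution agrees with the original on the surviving nodes. Each of the three transformations is then an instance of this principle.

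For the series transformation, $N_2$ has degree $2$, with conductances $1/R_A$ to $N_1$ and $1/R_B$ to $N_3$, and in any ambient circuit it is an interior node with no injected current. Eliminating it replaces those two edges by a single $N_1$--$N_3$ edge of conductance $\frac{(1/R_A)(1/R_B)}{1/R_A + 1/R_B} = \frac{1}{R_A + R_B}$, i.e.\ resistance $R_A + R_B$, and changes no other entry of the Laplacian. The parallel transformation is immediate from the definition of the Laplacian: two edges between $N_1$ and $N_2$ contribute $1/R_A + 1/R_B$ to the off-diagonal conductance, so they may be merged into one edge of resistance $(1/R_A + 1/R_B)^{-1}$ with nothing else altered.

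For the $\Delta$--Y transformation the two networks share the three terminals $N_1, N_2, N_3$, and the $Y$ has in addition an interior center node $N_0$ joined to $N_i$ by conductance $g_i = 1/R_i$. I would establish equivalence in either of two ways. First, Kron-reduce $N_0$ out of the $Y$: the resulting triangle on $\{N_1, N_2, N_3\}$ has conductance $\frac{g_i g_j}{g_1 + g_2 + g_3}$ between $N_i$ and $N_j$, and substituting the definitions of $R_1, R_2, R_3$ and clearing the common factor $R_A + R_B + R_C$ shows these are exactly $1/R_B$, $1/R_C$, $1/R_A$ on the edges matching Figure~\ref{fig:dy} --- precisely the $\Delta$. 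Alternatively, compute the three terminal-pair resistances with the third terminal left floating: in the $\Delta$ these are $R_B \parallel (R_A + R_C)$ and its two cyclic images, in the $Y$ they are $R_1 + R_3$ and its cyclic images, and the formulas for $R_1, R_2, R_3$ make the two triples coincide. Since a three-terminal resistive network is determined up to equivalence by those three numbers, this also finishes the proof.

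The only genuine computation is the algebraic identity in the $\Delta$--Y case, and it collapses quickly once the denominator $R_A + R_B + R_C$ is cleared; the one conceptual point to state carefully is that ``equivalent circuit'' is faithfully captured by the Kron-reduced Laplacian (equivalently, by all terminal-pair resistances), which is exactly the justification for applying these reductions freely when computing $r_{G_n}(i,j)$ in the sections that follow.
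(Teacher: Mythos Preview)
Your argument is correct: the Kron reduction (Schur complement) viewpoint cleanly handles all three transformations, and your verification of the $\Delta$--$Y$ identity checks out --- reducing the center node of the $Y$ yields a triangle with conductances $g_ig_j/(g_1+g_2+g_3)$, and substituting $R_1,R_2,R_3$ from Definition~\ref{def:dy} indeed reproduces the $\Delta$ conductances $1/R_C$, $1/R_B$, $1/R_A$ on the appropriate edges.

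The paper, however, does not prove this proposition at all: it simply cites a standard reference (\cite{oldbook}) and moves on. So your approach is not so much different as it is \emph{more} --- you have supplied a self-contained argument where the paper defers to the literature. Your framing in terms of Schur complements of the Laplacian is also somewhat more modern and structural than a classical circuit-theory derivation, and it has the advantage of making transparent why these transformations preserve $r_G(i,j)$ for all surviving node pairs, which is exactly what the subsequent sections rely on. The one place to be slightly careful is your alternative route via the three terminal-pair resistances: the assertion that a three-terminal resistive one-port is determined up to equivalence by those three numbers is true, but it is itself a fact that deserves a line of justification (it follows from the same Schur-complement reasoning, since the reduced $3\times 3$ Laplacian has three free off-diagonal entries). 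Your primary Kron-reduction argument does not need this, so the proof stands without it.
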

\begin{proof}
See~\cite{oldbook} for a proof of this result.
\end{proof}

In addition to the network transformations just described, we will also use the following cut-vertex theorem to calculate resistance distances.  It is evident that this result must hold from the physical meaning of resistance distance.  However, we were unable to find this exact statement in past work, so we have included it for completeness.
\begin{theorem}[Cut Vertex]
Let $G$ be a connected graph with weights $w(i,j)$ for every edge of $G$ and suppose $v$ is a cut-vertex of $G$.  Let $C$ be a component of $G - v$ and let $H$ be the induced subgraph on $V(C) \cup \{v\}$. Then for each pair of vertices $i$, $j$ of $H$, 
\begin{equation}\label{eq:star}
r_G(i,j) = r_H(i,j).
\end{equation}
\end{theorem}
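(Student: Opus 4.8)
The plan is to avoid the Moore--Penrose inverse entirely and argue directly with the linear system $L(G)\mathbf{v} = \mathbf{e}_i - \mathbf{e}_j$, using the observation recorded above that $r_G(i,j) = v_i - v_j$ for \emph{any} solution $\mathbf{v}$: the solution set is a coset of $\operatorname{span}\{\mathbf{1}\}$, so the quantity $v_i - v_j$ is solution-independent, and a solution exists because $\mathbf{e}_i - \mathbf{e}_j \perp \mathbf{1}$ lies in the range of the symmetric matrix $L(G)$. Write $A := V(C)$ and $B := V(G) \setminus (V(C) \cup \{v\})$. Since $v$ is a cut-vertex separating $C$ from the rest of $G$, no edge joins $A$ to $B$, so in the vertex ordering $A, \{v\}, B$,
\[
L(G) \;=\; \begin{pmatrix} L_{AA} & \ell_A & 0 \\ \ell_A^{T} & d & \ell_B^{T} \\ 0 & \ell_B & L_{BB} \end{pmatrix},
\]
where $d = \deg_G(v)$ and $\ell_A,\ell_B$ collect the relevant off-diagonal Laplacian entries. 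The Laplacian $L(H)$ on the vertex set $A \cup \{v\}$ is the $2\times 2$ top-left block of this matrix \emph{except} that the corner entry $d$ is replaced by $d' := \deg_H(v) = d - \sum_{b\in B} w(v,b)$. Crucially, the blocks $L_{AA}$ and $\ell_A$ are literally common to $L(G)$ and $L(H)$, since every edge inside $A$ or from $A$ to $v$ belongs to $H$.

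First I would fix a solution $\mathbf{u} \in \mathbb{R}^{A\cup\{v\}}$ of $L(H)\mathbf{u} = \mathbf{e}_i - \mathbf{e}_j$ (one exists because $H$ is connected: $C$ is connected and, as $G$ is connected, $v$ has at least one neighbor in $C$, so again $\mathbf{e}_i - \mathbf{e}_j \perp \mathbf{1}$). Then I would extend $\mathbf{u}$ to $\tilde{\mathbf{u}} \in \mathbb{R}^{V(G)}$ by holding all of $B$ at the cut-vertex potential: $\tilde u_b := u_v$ for $b \in B$ and $\tilde u_x := u_x$ for $x \in A\cup\{v\}$. The heart of the proof is to check $L(G)\tilde{\mathbf{u}} = \mathbf{e}_i - \mathbf{e}_j$ coordinate by coordinate. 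For $a\in A$: all neighbors of $a$ lie in $A\cup\{v\}$, where $\tilde{\mathbf{u}}$ agrees with $\mathbf{u}$ and the $a$-row of $L(G)$ agrees with that of $L(H)$, so this coordinate is $(L(H)\mathbf{u})_a = (\mathbf{e}_i - \mathbf{e}_j)_a$. For $b\in B$: all neighbors of $b$ lie in $B\cup\{v\}$, on which $\tilde{\mathbf{u}}$ is the constant $u_v$, so the coordinate is $0 = (\mathbf{e}_i - \mathbf{e}_j)_b$ since $i,j\notin B$. For $v$ itself: the edges into $B$ contribute nothing (constant potential), and the remaining diagonal term $d' u_v$ together with the edges into $A$ reproduce $(L(H)\mathbf{u})_v = (\mathbf{e}_i - \mathbf{e}_j)_v$ — the diagonal discrepancy $d - d'$ is exactly the total weight of the edges from $v$ into $B$, which is precisely what was discarded. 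Hence $\tilde{\mathbf{u}}$ solves the $G$-system, and since $i,j \in A\cup\{v\}$ we conclude $r_G(i,j) = \tilde u_i - \tilde u_j = u_i - u_j = r_H(i,j)$.

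I do not expect a genuine obstacle here; the only step demanding care — and the only thing a careless write-up could get wrong — is the bookkeeping at $v$, where $L(G)$ and $L(H)$ truly differ, and one must see that replacing $\deg_G(v)$ by $\deg_H(v)$ is exactly compensated by dropping the (now current-free) edges from $v$ into $B$. For a physical gloss: when one unit of current enters at $i$ and leaves at $j$ with $i,j\in V(H)$, Kirchhoff's current law applied to the whole set $B$ forces the net current flowing from $v$ into $B$ to vanish, hence no current flows anywhere in $B$, the potential there is constant, and the restriction of the current--voltage solution to $H$ is exactly the solution of the circuit problem on $H$ alone; the block computation above is merely the rigorous version of this.
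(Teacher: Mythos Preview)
Your proof is correct and takes a genuinely different route from the paper's. The paper invokes the determinantal formula of Bapat and Gupta (their Lemma~1 in \cite{BapatWheels}), writes $L(G)$ in block form, and verifies in two cases --- according to whether or not the cut-vertex $v$ is one of $i,j$ --- that the relevant ratio of cofactors factors as $(\text{the }H\text{-ratio})\cdot\det E/\det E$, where $E$ is the principal submatrix of $L(G)$ indexed by the ``other side'' of the cut. Your argument bypasses the determinantal machinery entirely: you solve $L(H)\mathbf{u}=\mathbf{e}_i-\mathbf{e}_j$ and extend $\mathbf{u}$ constantly across the cut, then check row by row that the extension solves $L(G)\tilde{\mathbf{u}}=\mathbf{e}_i-\mathbf{e}_j$, the only nontrivial row being that of $v$, where the diagonal defect $d-d'$ is exactly absorbed by the constant-potential edges into $B$. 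This is more elementary (no external lemma, no case split) and carries the physical content --- zero net current across the cut forces the far side to sit at a single potential --- directly into the linear algebra. The paper's approach, in turn, stays closer to the spanning-tree/cofactor viewpoint that drives the rest of the article and feeds naturally into the later combinatorial identities.
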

\begin{proof}
Equation~\ref{eq:star} is trivially true if $i = j$, so assume $i \neq j$.  Let $V(G) = \{1, 2, \ldots, n\}$.  Without loss of generality we take $V(H) = \{ 1, 2, \ldots, k\}$. 
 
\noindent \textbf{Case 1: } Neither $i$ nor $j$ is $v$.  Without loss of generality we take $i$ and $j$ to be vertices 1 and 2, and $v$ to be vertex $k$.  We may then write the Laplacian matrices of $G$ and $H$ as 
\[
L_G = \left[ \begin{array}{ccccc}
d_1 & -a & -\bb^T & -g & 0 \\
-a & d_2 & -\cc^T & -h & 0 \\ 
-\bb & -\cc & D & -\x & 0 \\
-g & -h & -\x^T & d_k & -\y^T \\
0 & 0 & 0 & -\y & E
\end{array}\right], \ \ L_H = \left[ \begin{array}{cccc}
d_1 & -a & -\bb^T & -g  \\
-a & d_2 & -\cc^T & -h  \\ 
-\bb & -\cc & D & -\x  \\
-g & -h & -\x^T & d_k^{\;\prime}  \\
\end{array}\right], 
\]

\noindent Since $d_k = g + h + \x^T \mathbf{1} + \y^T \mathbf{1}$ and 
$d_k^{\;\prime} = g + h + \x^T \mathbf{1}$, we have $d_k = d_k^{\;\prime} + \y^T\mathbf{1}$. By Lemma 1 in \cite{BapatWheels},
\begin{equation}\label{eq:detfrac}
r_G(1,2) =\frac{\det \left[ \begin{array}{ccc}
 D & -\x & 0 \\
-\x^T & d_k & -\y^T \\
 0 & -\y & E\end{array}\right]}
 {\det \left[ \begin{array}{cccc}
d_1 & -a & -\bb^T  & 0 \\
-a & d_2 & -\cc^T & 0 \\ 
-\bb & -\cc & D  & 0 \\
0 & 0 & 0 & E\end{array}\right]}
=\frac{\det \left[ \begin{array}{ccc}
 D & -\x & 0 \\
-\x^T & d_k & -\y^T \\
 0 & -\y & E\end{array}\right]}
 {\det \left[ \begin{array}{ccc}
d_1 & -a & -\bb^T   \\
-a & d_2 & -\cc^T \\ 
-\bb & -\cc & D   \\\end{array}\right]\cdot \det E}.
\end{equation}
Note that we chose to delete row and column $k$ in the denominator rather than either row and column 1 or row and column 2 as in the statement of Lemma 1. This is permissible, however,  because all cofactors of $L_G$ are equal since all row and column sums are zero.  Applying Lemma 1 to vertices 1, and 2 and the subgraph $H$, 
\begin{equation}\label{eq:detfrac2}
r_H(1,2) =\frac{\det \left[ \begin{array}{ccc}
 D & -\x \\
-\x^T & d_k^{\;\prime} \end{array}\right]}
 {\det \left[ \begin{array}{cccc}
d_1 & -a & -\bb^T   \\
-a & d_2 & -\cc^T  \\ 
-\bb & -\cc & D  \end{array}\right]}.
\end{equation}
However, 
\[
\det \left[ \begin{array}{ccc}
 D & -\x & 0 \\
-\x^T & d_k & -\y^T \\
 0 & -\y & E\end{array}\right]
 =\det \left[ \begin{array}{ccc}
 D & -\x & 0 \\
-\x^T & d_k - \mathbf{1}^T\y & -\y^T + \mathbf{1}^TE \\
 0 & -\y & E\end{array}\right].
\]
We have $d_k - \mathbf{1}^T \y = d_k^{\;\prime}$, and, because the column sums of $L_G$ are 0, $-\y^T + \mathbf{1}^TE=0$.

Therefore, 
\[
\det \left[ \begin{array}{ccc}
 D & -\x & 0 \\
-\x^T & d_k & -\y^T \\
 0 & -\y & E\end{array}\right]
 =\det \left[ \begin{array}{ccc}
 D & -\x  \\
-\x^T & d_k^{\;\prime} \end{array}\right]\cdot \det E.
\]
Then, by Equations  \ref{eq:detfrac} and \ref{eq:detfrac2}, $r_G(1,2) = r_H(1,2)$. \\[1mm]
\noindent \textbf{Case 2: } $v$ is one of the vertices $i, j$.  Without loss of generality, take $i = v  =1$ and $j = 2$. Then we may write the Laplacian matrices of $G$ and $H$ as 
\[
L_G = \left[ \begin{array}{cccc}
d_1 & -a & -\bb^T & -\y^T \\
-a & d_2 & -\cc^T  & 0 \\ 
-\bb & -\cc & D  & 0 \\
-\y & 0 & 0  & E
\end{array}\right], \ \ L_H = \left[ \begin{array}{cccc}
d_1^{\;\prime} & -a & -\bb^T  \\
-a & d_2 & -\cc^T  \\ 
-\bb & -\cc & D  \\
\end{array}\right].
\]
where $d_1 = a + \bb^T \mathbf{1} + \y^T\mathbf{1}$ and $d_1^{\;\prime} = a + \bb^T\mathbf{1}$.  Then by Lemma 1 in \cite{BapatWheels},
\[
r_G(1,2) =\frac{\det \left[ \begin{array}{ccc}
 D & 0 \\
 0 & E\end{array}\right]}
 {\det \left[ \begin{array}{cccc}
d_2  & -\cc^T  & 0 \\
 -\cc & D  & 0 \\
0 &  0 & E\end{array}\right]}
=\frac{\det D \det E}
 {\det  \left[ \begin{array}{cccc}
d_2  & -\cc^T  \\
 -\cc & D   \end{array}\right]
\cdot \det E}
=\frac{\det D }
 {\det  \left[ \begin{array}{cccc}
d_2  & -\cc^T  \\
 -\cc & D   \end{array}\right]
}= r_H(1,2),
\]
which completes the proof.
\end{proof}

\subsection{Fibonacci and Lucas Identities}
Both Fibonacci and Lucas numbers are important to the proof of our main results.  By $F_n$ we denote the $n$th Fibonacci number and by $L_m$ we denote the $m$th Lucas number.
\begin{proposition}\label{prop:fneg} $F_{-n}=(-1)^{n+1}F_n.$\end{proposition}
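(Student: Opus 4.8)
The plan is to prove the identity by induction on $n \geq 0$, using the two-term recurrence that defines the Fibonacci numbers on negative indices. Recall that the convention $F_{n+1} = F_n + F_{n-1}$ together with $F_0 = 0$, $F_1 = 1$ extends uniquely to all integer indices via the rearranged recurrence $F_{m-1} = F_{m+1} - F_m$; this ``downward'' form is the one I would use to handle negative arguments.

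First I would dispatch the base cases. For $n = 0$ the claim reads $F_0 = (-1)^{1} F_0$, which holds since $F_0 = 0$. For $n = 1$ it reads $F_{-1} = (-1)^{2} F_1 = 1$, and indeed $F_{-1} = F_1 - F_0 = 1$ from the rearranged recurrence.

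For the inductive step, I would assume the identity for $n-1$ and for $n$ and deduce it for $n+1$. Applying $F_{m-1} = F_{m+1} - F_m$ with $m = -n$ gives $F_{-(n+1)} = F_{-(n-1)} - F_{-n}$. Substituting the induction hypotheses $F_{-(n-1)} = (-1)^{n} F_{n-1}$ and $F_{-n} = (-1)^{n+1} F_n$ yields
\[
F_{-(n+1)} = (-1)^{n} F_{n-1} - (-1)^{n+1} F_n = (-1)^{n}\bigl(F_{n-1} + F_n\bigr) = (-1)^{n} F_{n+1} = (-1)^{(n+1)+1} F_{n+1},
\]
which is exactly the claim for $n+1$. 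An alternative one-line argument would substitute Binet's formula $F_n = (\phi^n - \psi^n)/(\phi - \psi)$ and use $\phi\psi = -1$, so that $\phi^{-n} = (-1)^n \psi^n$ and $\psi^{-n} = (-1)^n \phi^n$, giving $F_{-n} = (-1)^n(\psi^n - \phi^n)/(\phi-\psi) = (-1)^{n+1} F_n$ directly; but the induction is cleaner and self-contained.

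There is no real obstacle here: the statement is elementary, and the only point requiring care is selecting the downward form of the recurrence and correctly tracking the parity of the sign exponent through the substitution (using $-(-1)^{n+1} = (-1)^{n}$).
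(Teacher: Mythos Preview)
Your proof is correct and complete. The paper itself does not actually prove this proposition; it simply cites Vajda's book \emph{Fibonacci and Lucas Numbers, and the Golden Section} for the result. Your induction on the downward recurrence (with the Binet alternative noted) is exactly the standard argument one would expect to find in such a reference, so there is nothing to compare beyond the fact that you have supplied the details the paper omits.
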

\begin{proof}
For the proof see~\cite{Vajda}.%
\end{proof}
\begin{proposition}\label{prop:fsumsq} $F_{n}^2+F_{n+1}^2 = F_{2n+1}.$\end{proposition}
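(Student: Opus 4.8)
The plan is to obtain this as an immediate specialization of the Fibonacci addition formula
\[
F_{a+b} = F_{a-1}F_b + F_a F_{b+1},
\]
which is standard (see \cite{Vajda}). Taking $a = n+1$ and $b = n$ gives
\[
F_{2n+1} = F_n F_n + F_{n+1} F_{n+1} = F_n^2 + F_{n+1}^2,
\]
which is exactly the claim. No case analysis or estimation is needed, so there is effectively no obstacle here; the only ``choice'' is which classical identity to invoke.

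For readers wanting a self-contained argument I would instead induct on $n$. The base case $n = 0$ reads $F_0^2 + F_1^2 = 0 + 1 = 1 = F_1$. For the inductive step, write $F_{n+2} = F_{n+1} + F_n$ and expand
\[
F_{n+1}^2 + F_{n+2}^2 = \bigl(F_n^2 + F_{n+1}^2\bigr) + F_{n+1}^2 + 2F_nF_{n+1};
\]
the parenthesized term is $F_{2n+1}$ by hypothesis, and it remains to identify $F_{n+1}^2 + 2F_nF_{n+1} = F_{n+1}(F_{n+1} + 2F_n)$ with $F_{2n+2}$. The cleanest way to close the loop is therefore to prove the pair of identities $F_{2n+1} = F_n^2 + F_{n+1}^2$ and $F_{2n+2} = F_{n+1}(F_{n+1}+2F_n)$ simultaneously by induction; carrying the two statements together is what makes the otherwise-appearing cross term harmless, and the computation is then entirely routine.

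A third, equally short route is the matrix identity
\[
\begin{pmatrix} 1 & 1 \\ 1 & 0 \end{pmatrix}^{\! n} = \begin{pmatrix} F_{n+1} & F_n \\ F_n & F_{n-1} \end{pmatrix},
\]
upon squaring both sides and reading off the $(1,1)$ entry one gets $F_{2n+1}$ on the left and $F_{n+1}^2 + F_n^2$ on the right. I expect to present the one-line derivation from the addition formula and, if a self-contained treatment is preferred, record the paired induction as the alternative.
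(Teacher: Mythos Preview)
Your proposal is correct; the paper itself does not give a proof but simply refers the reader to \cite{Vajda}, so your one-line derivation from the addition formula (which you also source to \cite{Vajda}) is in the same spirit and in fact more explicit. The additional self-contained arguments you sketch (the paired induction and the matrix identity) are fine alternatives but go beyond what the paper does.
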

\begin{proof}
For the proof see~\cite{Vajda}.%
\end{proof}
\begin{proposition}\label{prop:f2m} $F_{2m}=L_mF_m.$\end{proposition}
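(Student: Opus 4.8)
The statement to prove is the identity $F_{2m} = L_m F_m$, where $F_m$ is the $m$th Fibonacci number and $L_m$ is the $m$th Lucas number.

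\medskip

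\noindent\textbf{Proof proposal.}

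The plan is to prove the identity $F_{2m} = L_m F_m$ by reducing it to facts already established in the paper, together with the basic closed-form (Binet) formulas for Fibonacci and Lucas numbers. The cleanest route uses the Binet formulas: writing $\varphi = (1+\sqrt 5)/2$ and $\psi = (1-\sqrt 5)/2$, we have $F_m = (\varphi^m - \psi^m)/\sqrt 5$ and $L_m = \varphi^m + \psi^m$. Then one simply computes
\[
L_m F_m = (\varphi^m + \psi^m)\cdot\frac{\varphi^m - \psi^m}{\sqrt 5} = \frac{\varphi^{2m} - \psi^{2m}}{\sqrt 5} = F_{2m},
\]
using $(\varphi^m)(\psi^m) = (\varphi\psi)^m = (-1)^m$ only implicitly — in fact it is not even needed, since the product of the sum and difference telescopes directly. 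This is a two-line verification once the Binet formulas are granted, and the Binet formulas are standard (and are exactly the kind of identity the paper cites to \cite{Vajda}).

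\medskip

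An alternative, if one prefers to stay purely within the recurrence-based world and not invoke Binet, is to use the well-known relation $L_m = F_{m-1} + F_{m+1}$ together with $F_{2m} = F_m(F_{m+1} + F_{m-1})$. The latter is itself a consequence of the addition formula $F_{a+b} = F_a F_{b+1} + F_{a-1} F_b$ specialized to $a = b = m$: indeed $F_{2m} = F_m F_{m+1} + F_{m-1} F_m = F_m(F_{m+1} + F_{m-1}) = F_m L_m$. This derivation meshes well with Proposition~\ref{prop:fsumsq} (the identity $F_n^2 + F_{n+1}^2 = F_{2n+1}$), which is the ``$b = a$'' or ``$b = a+1$'' case of the same addition formula, so the paper is already implicitly working in that toolbox. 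Either way, the proof is short.

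\medskip

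I do not anticipate any real obstacle here — the statement is a classical Fibonacci–Lucas identity and the ``hard part'' is merely deciding which standard fact to treat as the black box. Given the style of the surrounding propositions (each proved by a one-line citation to \cite{Vajda}), the expected ``proof'' in the paper is almost certainly just a reference. For a self-contained argument I would present the Binet-formula computation above, since it is the most transparent and requires no case analysis on the parity of $m$; the addition-formula derivation is an equally acceptable backup that keeps everything in terms of integer recurrences.
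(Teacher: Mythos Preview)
Your proposal is correct. Both arguments you give---the Binet-formula computation and the addition-formula derivation via $F_{2m}=F_mF_{m+1}+F_{m-1}F_m=F_m(F_{m+1}+F_{m-1})=F_mL_m$---are valid, and the second one in fact lines up exactly with results stated elsewhere in the paper (Corollary~\ref{cor:splitsum} and Corollary~\ref{cor:2fm1}).

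As you anticipated, the paper does not prove this at all: its ``proof'' is the single sentence ``For the proof see~\cite{Vajda}.'' So your write-up already supplies strictly more than the paper does; either of your routes is a fine self-contained replacement for the citation.
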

\begin{proof}
For the proof see~\cite{Vajda}.%
\end{proof}
\begin{proposition}\label{prop:splitsum} $F_{k+m}=F_{k+1}F_m+F_kF_{m-1}$.\end{proposition}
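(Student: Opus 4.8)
The plan is to establish $F_{k+m}=F_{k+1}F_m+F_kF_{m-1}$ by induction on $m$ with $k$ held fixed, using the negative-index convention of Proposition~\ref{prop:fneg} (so that $F_{-1}=1$ and $F_0=0$) both to anchor the base cases and to make the recurrence $F_{n+1}=F_n+F_{n-1}$ valid for all integers $n$.

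First I would check the two base cases $m=0$ and $m=1$. For $m=0$ the right-hand side is $F_{k+1}F_0+F_kF_{-1}=0+F_k=F_k$, and for $m=1$ it is $F_{k+1}F_1+F_kF_0=F_{k+1}$; both agree with $F_{k+m}$. For the inductive step I would assume the identity for $m-1$ and for $m$ and then compute
\[
F_{k+m+1}=F_{k+m}+F_{k+m-1}=\bigl(F_{k+1}F_m+F_kF_{m-1}\bigr)+\bigl(F_{k+1}F_{m-1}+F_kF_{m-2}\bigr),
\]
which regroups as $F_{k+1}(F_m+F_{m-1})+F_k(F_{m-1}+F_{m-2})=F_{k+1}F_{m+1}+F_kF_m$, exactly the identity for $m+1$. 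Since the recurrence also holds for negative indices, the same two-term induction run backwards covers $m<1$, so the formula holds for all integers $m$ (and all integers $k$), which is the generality invoked elsewhere in the paper.

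An essentially computation-free alternative is the matrix identity $\left(\begin{smallmatrix}1&1\\1&0\end{smallmatrix}\right)^{n}=\left(\begin{smallmatrix}F_{n+1}&F_n\\F_n&F_{n-1}\end{smallmatrix}\right)$, valid for every integer $n$ because the matrix is invertible: writing $Q^{k+m}=Q^{k}Q^{m}$ and reading off the $(1,2)$ entry gives $F_{k+m}=F_{k+1}F_m+F_kF_{m-1}$ immediately. I do not expect a genuine obstacle in either route; the only point that calls for care is keeping the negative-index values $F_{-1}=1$, $F_0=0$, $F_{-2}=-1$ consistent in the base cases, and Proposition~\ref{prop:fneg} already pins these down.
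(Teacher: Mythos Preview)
Your induction is clean and correct, and the matrix alternative is a nice bonus. The paper, however, does not prove this identity at all: its ``proof'' is simply a citation to \cite{BQ}. So there is nothing to compare at the level of argument---you have supplied a genuine proof where the paper only points to the literature. If anything, your write-up is more self-contained than the paper's treatment, and the care you take with negative indices via Proposition~\ref{prop:fneg} is exactly what is needed when the identity is later applied (e.g., in the proof of Theorem~\ref{thm:main} and in the Appendix) with arguments that can be negative.
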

\begin{proof}
For the proof see~\cite{BQ}.%
\end{proof}
\begin{corollary}\label{cor:splitsum} $F_{2m}=F_{m+1}F_m+F_mF_{m-1}.$\end{corollary}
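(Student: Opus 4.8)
The plan is to obtain this identity as an immediate specialization of Proposition~\ref{prop:splitsum}. That proposition states $F_{k+m} = F_{k+1}F_m + F_k F_{m-1}$ for all indices, and it holds in particular when the two parameters coincide. So the first and essentially only step is to set $k = m$ in Proposition~\ref{prop:splitsum}, which yields
\[
F_{2m} = F_{m+m} = F_{m+1}F_m + F_m F_{m-1},
\]
exactly the claimed formula. No case analysis, induction, or generating-function argument is needed once Proposition~\ref{prop:splitsum} is in hand; the corollary is a direct substitution.

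If one wanted an independent check rather than invoking Proposition~\ref{prop:splitsum}, the alternative route would be to start from Proposition~\ref{prop:f2m}, namely $F_{2m} = L_m F_m$, and then use the standard relation $L_m = F_{m+1} + F_{m-1}$ to rewrite $L_m F_m = (F_{m+1} + F_{m-1})F_m = F_{m+1}F_m + F_m F_{m-1}$. This recovers the same identity, but it relies on the Lucas--Fibonacci link rather than on the addition formula, so I would present the substitution into Proposition~\ref{prop:splitsum} as the main argument and mention this only as a remark. Since the derivation is a one-line specialization, there is no genuine obstacle here; the only thing to be careful about is that the index shift $m-1$ remains valid in the degenerate small cases, which is covered by Proposition~\ref{prop:fneg} extending the Fibonacci numbers to all integers.
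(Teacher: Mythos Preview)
Your argument is correct and matches the paper's intent: the corollary is stated immediately after Proposition~\ref{prop:splitsum} with no separate proof, so it is meant to follow by the very substitution $k=m$ that you give. The alternative route via $F_{2m}=L_mF_m$ and $L_m=F_{m+1}+F_{m-1}$ is fine as a remark but unnecessary here.
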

\begin{proposition}\label{prop:splitdiff}$F_{n + m} = F_{n + 1}F_{m + 1} - F_{n - 1}F_{m - 1}$\end{proposition}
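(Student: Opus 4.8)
The plan is to derive the identity $F_{n+m} = F_{n+1}F_{m+1} - F_{n-1}F_{m-1}$ directly from the addition formula already available as Proposition~\ref{prop:splitsum}, namely $F_{k+m} = F_{k+1}F_m + F_k F_{m-1}$. The idea is that the difference form is just the sum form after shifting the index and using the recurrence $F_{k+1} = F_k + F_{k-1}$ to rewrite one of the factors; no induction is needed.

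First I would apply Proposition~\ref{prop:splitsum} with $k$ replaced by $n$ to get
\[
F_{n+m} = F_{n+1}F_m + F_n F_{m-1}.
\]
Next I would rewrite each Fibonacci number on the right using the basic recurrence so that indices $m+1$ and $n-1$ appear: substitute $F_m = F_{m+1} - F_{m-1}$ in the first term and $F_n = F_{n+1} - F_{n-1}$ in the second. Expanding gives
\[
F_{n+m} = F_{n+1}F_{m+1} - F_{n+1}F_{m-1} + F_{n+1}F_{m-1} - F_{n-1}F_{m-1},
\]
and the two middle terms cancel, leaving exactly $F_{n+1}F_{m+1} - F_{n-1}F_{m-1}$, as claimed. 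An equally clean alternative is to start from Proposition~\ref{prop:splitsum} in the form $F_{n+m} = F_{n+1}F_m + F_n F_{m-1}$ and separately from the shifted version $F_{n+m} = F_n F_{m+1} + F_{n-1} F_m$ (applying the same proposition with the roles arranged to land on $m+1$), then take a suitable linear combination to eliminate the unwanted cross terms; I would only fall back on this if the first route needs more care at small indices.

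The only thing to watch is that the identity should hold for all integers $n,m$ (it is used in the paper with indices that can be small or even negative), so I would note that Propositions~\ref{prop:fneg} and~\ref{prop:splitsum} are stated for all integer indices and that the recurrence $F_{k+1}=F_k+F_{k-1}$ likewise extends to negative indices via Proposition~\ref{prop:fneg}; hence the algebraic manipulation above is valid in full generality. I do not anticipate a real obstacle here — the manipulation is a two-line algebraic identity — so the "hard part," such as it is, is merely being careful that the cited addition formula is invoked with the correct index substitution so that the telescoping cancellation occurs.
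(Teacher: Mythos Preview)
Your derivation is correct: applying Proposition~\ref{prop:splitsum} and then substituting $F_m=F_{m+1}-F_{m-1}$ and $F_n=F_{n+1}-F_{n-1}$ gives the claimed cancellation in two lines, and the remark about validity for all integer indices via Proposition~\ref{prop:fneg} is appropriate.

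The paper itself does not supply a proof at all; it simply cites \cite{BQ}. So your approach differs in that it is self-contained within the paper's own list of Fibonacci identities, deriving Proposition~\ref{prop:splitdiff} as an immediate corollary of Proposition~\ref{prop:splitsum} rather than deferring to an external reference. This buys you something genuine: a reader does not have to consult \cite{BQ}, and the logical dependence among the stated identities is made explicit. The cost is nil, since the manipulation is a one-line algebraic identity.
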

\begin{proof}
For the proof see~\cite{BQ}.%
\end{proof}

\begin{proposition}\label{prop:catalan}[Catalan's Identity] $F_{n}^2-F_{n+r}F_{n-r} = (-1)^{n-r}F_r^2$\end{proposition}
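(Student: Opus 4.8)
The plan is to prove Catalan's identity directly from Binet's formula, $F_n = (\phi^n - \psi^n)/\sqrt{5}$, where $\phi = (1+\sqrt{5})/2$ and $\psi = (1-\sqrt{5})/2$ satisfy $\phi + \psi = 1$, $\phi\psi = -1$, and $\phi - \psi = \sqrt{5}$. Since Binet's formula is valid for every integer index, this argument will automatically cover negative indices as well, which is consistent with Proposition~\ref{prop:fneg} and with the fact that the statement places no restriction on $r$. First I would expand the two relevant products: $5F_n^2 = (\phi^n - \psi^n)^2 = \phi^{2n} + \psi^{2n} - 2(\phi\psi)^n = \phi^{2n} + \psi^{2n} - 2(-1)^n$, and, collecting the cross terms via $\phi^{n+r}\psi^{n-r} = (\phi\psi)^{n-r}\phi^{2r}$ and $\psi^{n+r}\phi^{n-r} = (\phi\psi)^{n-r}\psi^{2r}$, $5F_{n+r}F_{n-r} = (\phi^{n+r} - \psi^{n+r})(\phi^{n-r} - \psi^{n-r}) = \phi^{2n} + \psi^{2n} - (-1)^{n-r}(\phi^{2r} + \psi^{2r})$.

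Subtracting, the $\phi^{2n} + \psi^{2n}$ terms cancel, leaving $5\bigl(F_n^2 - F_{n+r}F_{n-r}\bigr) = (-1)^{n-r}(\phi^{2r} + \psi^{2r}) - 2(-1)^n$. Writing $(-1)^n = (-1)^{n-r}(-1)^r$ and factoring out $(-1)^{n-r}$ gives $5\bigl(F_n^2 - F_{n+r}F_{n-r}\bigr) = (-1)^{n-r}\bigl(\phi^{2r} + \psi^{2r} - 2(-1)^r\bigr)$. The last step is to recognize the bracket: since $(-1)^r = (\phi\psi)^r$, we have $\phi^{2r} + \psi^{2r} - 2(\phi\psi)^r = (\phi^r - \psi^r)^2 = 5F_r^2$, and dividing through by $5$ yields $F_n^2 - F_{n+r}F_{n-r} = (-1)^{n-r}F_r^2$.

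I do not expect a serious obstacle here; the only point requiring care is the bookkeeping of exponent parities — in particular reconciling $(-1)^{n+r}$, which arises naturally, with the stated sign $(-1)^{n-r}$, the two being equal. As an alternative that stays entirely inside the identities already listed, one could combine Proposition~\ref{prop:splitsum} at $k=r$, $m=n$, giving $F_{n+r} = F_{r+1}F_n + F_r F_{n-1}$, with the same proposition at $k=-r$, $m=n$ together with Proposition~\ref{prop:fneg}, giving $F_{n-r} = (-1)^r(F_{r-1}F_n - F_r F_{n-1})$; multiplying these, using $F_{r+1} - F_{r-1} = F_r$ and the $r=1$ case $F_{n-1}F_{n+1} = F_n^2 + (-1)^n$ to collapse the mixed terms, and then applying $F_{r+1}F_{r-1} - F_r^2 = (-1)^r$ once more, produces the same conclusion. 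In that route the telescoping cancellation of the cross terms is the one spot where the computation must be organized carefully.
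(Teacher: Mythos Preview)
Your proof via Binet's formula is correct and complete; the bookkeeping of the cross terms and the parity sign is handled cleanly, and your observation that $(-1)^{n+r}=(-1)^{n-r}$ resolves the only potential ambiguity. The paper itself does not give a proof of Catalan's identity at all: it simply refers the reader to Vajda's monograph. So your argument is strictly more self-contained than what the paper provides, and the alternative route you sketch through Proposition~\ref{prop:splitsum} and Proposition~\ref{prop:fneg} has the added virtue of staying entirely within the identities the paper has already stated.
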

\begin{proof}
For the proof see~\cite{Vajda}.%
\end{proof}
%\begin{corollary}\label{cor:catalan} $F_{n+1}F_{n-1}-F_{n+2}F_{n-2}=(-1)^n2$.\end{corollary}
%\begin{proof}
%From Proposition~\ref{prop:catalan} setting $r=2$ and $r=1$ respectively yields
%\[F_{n}^2-F_{n+2}F_{n-2} = (-1)^{n-2}F_2^2 = (-1)^{n}\]
%and
%\[F_{n}^2-F_{n+1}F_{n-1} = (-1)^{n-1}F_1^2 = (-1)^{n-1}.\]
%However $F_{n}^2-F_{n+1}F_{n-1} = (-1)^{n-1}$ implies $F_{n+1}F_{n-1}-F_{n}^2 =  (-1)^{n}.$
%Summing the two yields the desired equality.
%\end{proof}
\begin{proposition}\label{prop:doca}[d'Ocagne's Identity] $F_nF_{m + 1} - F_mF_{n + 1} = (-1)^mF_{n - m}$.
\end{proposition}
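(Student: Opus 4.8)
The plan is to derive d'Ocagne's identity directly from the index-addition formula of Proposition~\ref{prop:splitsum} together with the reflection formula of Proposition~\ref{prop:fneg}, rather than appealing to Vajda. The key point is that Proposition~\ref{prop:splitsum}, $F_{k+m} = F_{k+1}F_m + F_k F_{m-1}$, remains valid for all integers $k$ and $m$ once the Fibonacci sequence is extended to negative indices; this extension is forced by the recurrence, and its effect is exactly what Proposition~\ref{prop:fneg} records.

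First I would apply that formula with $k = n$ and with the second index equal to $-m$, obtaining
\[
F_{n-m} = F_{n+1}F_{-m} + F_n F_{-m-1}.
\]
Next I would rewrite the two negative-index terms using Proposition~\ref{prop:fneg}: $F_{-m} = (-1)^{m+1}F_m$ and $F_{-m-1} = F_{-(m+1)} = (-1)^{m+2}F_{m+1} = (-1)^m F_{m+1}$. Substituting and factoring out $(-1)^m$ gives
\[
F_{n-m} = (-1)^{m+1}F_m F_{n+1} + (-1)^m F_n F_{m+1} = (-1)^m\bigl(F_n F_{m+1} - F_m F_{n+1}\bigr),
\]
and multiplying both sides by $(-1)^m$ yields $F_n F_{m+1} - F_m F_{n+1} = (-1)^m F_{n-m}$, as claimed.

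The only real obstacle is the bookkeeping with negative indices — that is, being comfortable that Proposition~\ref{prop:splitsum} persists over all of $\mathbb{Z}$; everything after that is a two-line computation. If one prefers to avoid negative indices in the intermediate steps, an equally short alternative is induction on $m$: the cases $m = 0$ and $m = 1$ are immediate from $F_0 = 0$ and $F_1 = F_2 = 1$, and for the inductive step one expands $F_{m+2} = F_{m+1}+F_m$ while keeping $F_{n+1}$ fixed to get
\[
F_n F_{m+2} - F_{m+1}F_{n+1} = \bigl(F_n F_{m+1} - F_m F_{n+1}\bigr) + \bigl(F_n F_m - F_{m-1}F_{n+1}\bigr),
\]
then applies the inductive hypothesis at $m$ and at $m-1$ and simplifies via $F_{n-m+1} - F_{n-m} = F_{n-m-1}$; the negative-index convention of Proposition~\ref{prop:fneg} is still needed only to interpret $F_{n-m}$ when $m > n$, exactly as in the statement itself.
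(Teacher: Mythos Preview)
Your derivation is correct: applying Proposition~\ref{prop:splitsum} with second index $-m$ and then invoking Proposition~\ref{prop:fneg} gives the identity in two lines, and your alternative induction is also sound.

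The paper itself does not prove this proposition; it simply cites Vajda. So your approach is not merely different but strictly more self-contained: you deduce d'Ocagne's identity from two propositions already stated in the paper, whereas the paper treats it as an external fact. The only mild caveat is the one you flag yourself---Proposition~\ref{prop:splitsum} is stated (via the cite to \cite{BQ}) for nonnegative indices, so using it with a negative second argument requires the observation that the identity extends to all of $\mathbb{Z}$ once Proposition~\ref{prop:fneg} is in hand. Your induction alternative sidesteps that issue entirely and is perhaps the cleaner choice if one wants to stay strictly within what the paper has asserted.
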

\begin{proof}
For the proof see~\cite{Vajda}.%
\end{proof}
%\begin{proposition}
%$(-1)^{n-1}\left[F^2_nF_{m+1}F_{m-1}-F_m^2F_{n+1}F_{n-1}\right]=F_{m+n}F_{m-n}.$
%\end{proposition}
%\begin{proof}
%
%
%For the proof see fixme I'm not sure this correct~\cite{Vajda}.%7-b
%\end{proof}
\begin{proposition}\label{prop:2fm1} $2F_{m+1}=F_m+L_{m}.$\end{proposition}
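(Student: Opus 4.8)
The plan is to derive this identity directly from results already established earlier in the paper rather than citing \cite{Vajda} again. First I would combine Proposition~\ref{prop:f2m}, which gives $F_{2m}=L_mF_m$, with Corollary~\ref{cor:splitsum}, which gives $F_{2m}=F_{m+1}F_m+F_mF_{m-1}=(F_{m+1}+F_{m-1})F_m$. Equating the two expressions for $F_{2m}$ yields $L_mF_m=(F_{m+1}+F_{m-1})F_m$, and for $m\neq 0$ we may cancel the nonzero factor $F_m$ to recover the familiar relation $L_m=F_{m+1}+F_{m-1}$.

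Next I would simply add $F_m$ to both sides of this relation and invoke the Fibonacci recurrence $F_{m-1}+F_m=F_{m+1}$: this gives $F_m+L_m=F_m+F_{m-1}+F_{m+1}=F_{m+1}+F_{m+1}=2F_{m+1}$, which is exactly the claimed identity. Finally, I would dispose of the one index excluded by the cancellation step, $m=0$, by direct inspection: $2F_1=2$ while $F_0+L_0=0+2=2$, so the formula holds there as well. If one wants the statement for all integer indices $m$, the negative cases follow from Proposition~\ref{prop:fneg} together with the companion reflection formula $L_{-m}=(-1)^mL_m$.

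There is essentially no obstacle here: the only point requiring a moment's care is the division by $F_m$ used to pass from $L_mF_m=(F_{m+1}+F_{m-1})F_m$ to $L_m=F_{m+1}+F_{m-1}$, which is why the $m=0$ case must be checked separately. Everything else is the two-line manipulation above, relying only on the Fibonacci recurrence and the two previously proved propositions.
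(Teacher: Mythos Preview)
Your argument is correct. The derivation of $L_m=F_{m+1}+F_{m-1}$ from Propositions~\ref{prop:f2m} and Corollary~\ref{cor:splitsum} is valid, the cancellation of $F_m$ is legitimate for $m\neq 0$, and you correctly handle the excluded case $m=0$ by direct verification. The final step using the Fibonacci recurrence is routine.

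That said, your route differs from the paper's, which does not give an argument at all but simply refers to Vajda's monograph~\cite{Vajda}. What you have done is more self-contained: you deduce the identity entirely from results already established earlier in Section~\ref{sec:note}, so no external citation is needed. One small structural point worth noting is that the relation $L_m=F_{m+1}+F_{m-1}$ you derive as an intermediate step is precisely Corollary~\ref{cor:2fm1}, which the paper presents as a \emph{consequence} of Proposition~\ref{prop:2fm1}; your argument effectively reverses that dependence, obtaining Corollary~\ref{cor:2fm1} first and then Proposition~\ref{prop:2fm1} from it. Both orderings are logically sound, and your version has the advantage of avoiding the external reference.
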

\begin{proof}
For the proof see~\cite{Vajda}.%7-b
\end{proof}
\begin{corollary}\label{cor:2fm1}
$F_{m+1}+F_{m-1} = L_m$.
\end{corollary}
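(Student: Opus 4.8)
The plan is to derive this directly from Proposition~\ref{prop:2fm1} together with the defining Fibonacci recurrence $F_{m+1} = F_m + F_{m-1}$. Proposition~\ref{prop:2fm1} states that $2F_{m+1} = F_m + L_m$, which rearranges to $L_m = 2F_{m+1} - F_m$. First I would rewrite the right-hand side using the recurrence in the form $F_m = F_{m+1} - F_{m-1}$, obtaining
\[
L_m = 2F_{m+1} - (F_{m+1} - F_{m-1}) = F_{m+1} + F_{m-1},
\]
which is exactly the claimed identity. This is essentially a one-line substitution, so I do not expect any genuine obstacle; the only point requiring a moment of care is that the recurrence is being applied at possibly non-positive indices, but Proposition~\ref{prop:fneg} ensures that the Fibonacci sequence (and hence the recurrence) extends consistently to all integers $m$, so the manipulation is valid in full generality.

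An alternative route, should one prefer not to invoke Proposition~\ref{prop:2fm1}, is to combine Proposition~\ref{prop:f2m} ($F_{2m} = L_m F_m$) with Corollary~\ref{cor:splitsum} ($F_{2m} = F_{m+1}F_m + F_m F_{m-1} = F_m(F_{m+1} + F_{m-1})$). Equating the two expressions gives $L_m F_m = F_m(F_{m+1} + F_{m-1})$, and cancelling the common factor $F_m$ yields the result whenever $F_m \neq 0$. The only case this misses is $m = 0$, where $F_0 = 0$; there one checks directly that $F_1 + F_{-1} = 1 + 1 = 2 = L_0$, again using Proposition~\ref{prop:fneg}. I would present the first approach as the main proof, since it is cleaner and avoids this small divisibility bookkeeping.
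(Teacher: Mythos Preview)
Your proof is correct and follows exactly the intended route: the paper states this as an immediate corollary of Proposition~\ref{prop:2fm1} without further argument, and your one-line derivation via the Fibonacci recurrence is precisely the expected justification.
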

\begin{proposition}\label{prop:lm1} $L_{m+1}=2F_m+F_{m+1}.$\end{proposition}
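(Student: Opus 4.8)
The plan is to deduce this directly from Corollary~\ref{cor:2fm1} together with the defining Fibonacci recurrence $F_{m+2} = F_{m+1} + F_m$, rather than appealing once more to~\cite{Vajda}. First I would apply Corollary~\ref{cor:2fm1} with the index shifted up by one, which gives $L_{m+1} = F_{m+2} + F_m$. Then I would rewrite $F_{m+2}$ using the recurrence as $F_{m+1} + F_m$, so that $L_{m+1} = F_{m+1} + F_m + F_m = F_{m+1} + 2F_m = 2F_m + F_{m+1}$, which is the claimed identity.

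An alternative route is to argue from Proposition~\ref{prop:2fm1}: replacing $m$ by $m+1$ there yields $2F_{m+2} = F_{m+1} + L_{m+1}$, hence $L_{m+1} = 2F_{m+2} - F_{m+1} = 2(F_{m+1} + F_m) - F_{m+1} = F_{m+1} + 2F_m$. Either way the computation is two lines and uses only facts already established in this subsection.

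There is essentially no obstacle here; the only point worth a moment's care is that the index shift in Corollary~\ref{cor:2fm1} (equivalently Proposition~\ref{prop:2fm1}) is legitimate, which it is since all of these Fibonacci/Lucas identities extend to all integer indices via Proposition~\ref{prop:fneg}. For completeness one could instead simply cite~\cite{Vajda}, matching the style of the surrounding propositions, but the short derivation above is self-contained and preferable.
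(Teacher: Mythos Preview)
Your derivation is correct. Both routes you outline (via Corollary~\ref{cor:2fm1} or via Proposition~\ref{prop:2fm1}, followed by the Fibonacci recurrence) are valid one-line proofs.

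The paper itself does not prove this identity at all: it simply writes ``For the proof see~\cite{BQ}.'' (Note: the citation is to \cite{BQ}, not \cite{Vajda} as you guessed.) So your approach is genuinely different in that it is self-contained, deriving the identity from results already stated in the subsection rather than deferring to an external reference. What your approach buys is that a reader need not chase the citation; what the paper's approach buys is brevity and uniformity with the surrounding propositions, most of which are also dispatched by citation. Either is entirely acceptable for an identity this elementary.
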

\begin{proof}
For the proof see~\cite{BQ}.%
\end{proof}
%\begin{proposition}\label{fib:id} $F_{n + i} L_{n + k} - F_n L_{n + i + k} = (-1)^n F_i L_k$\end{proposition}
%\begin{proof}
%For the proof see~\cite{Vajda}.%
%\end{proof}
%\begin{corollary} $F_p L_{p-2} - F_{p-1} L_{p-1} = (-1)^{p}$\end{corollary}
%\begin{proof}
%Letting $n=p-1$, $i=1$ and $k=-1$ and applying Proposition~\ref{fib:id} yields the desired result.
%\end{proof}
%%\begin{proposition}\label{prop:Melham} For integers $a,b,c$, and $n$ the following relationship holds
%%\[F_{n+a+b+c}F_{n-a}F_{n-b}F_{n-c} - F_{n-a-b-c}F_{n+a}F_{n+b}F_{n+c}=(-1)^{n+a+b+c}F_{a+b}F_{a+c}F_{b+c}F_{2n}.\]
%%\end{proposition}
%%\begin{proof}
%%For the proof see \cite{Melham2011}.
%%\end{proof}
%\begin{proposition}\label{prop:newprop2} $F_{n+i}F_{n+r}-F_nF_{n+i+r} = (-1)^nF_iF_r.$\end{proposition}
%\begin{proof}
%For the proof see~\cite{Vajda}.%
%\end{proof}
\begin{proposition}\label{prop:5diff} $\displaystyle
5 F_n^2 - L_n^2 = 4 (-1)^n + 1$
\end{proposition}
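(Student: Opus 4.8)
The plan is to collapse the left-hand side to a single parity-dependent constant by expressing $L_n$ through Fibonacci numbers and then invoking Catalan's identity at radius one. By Corollary~\ref{cor:2fm1} we may write $L_n = F_{n+1}+F_{n-1}$, while the Fibonacci recurrence gives $F_{n+1}-F_{n-1}=F_n$. Factoring a difference of squares with these two expressions yields
\[
L_n^2 - F_n^2 = (F_{n+1}+F_{n-1})^2 - (F_{n+1}-F_{n-1})^2 = 4\,F_{n+1}F_{n-1}.
\]
Next I would apply Catalan's identity (Proposition~\ref{prop:catalan}) with $r=1$: since $F_1=1$ it reads $F_n^2 - F_{n+1}F_{n-1} = (-1)^{n-1}$, i.e.\ $F_{n+1}F_{n-1}=F_n^2+(-1)^n$. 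Substituting this in,
\[
L_n^2 - F_n^2 = 4F_n^2 + 4(-1)^n, \qquad\text{hence}\qquad L_n^2 = 5F_n^2 + 4(-1)^n,
\]
and rearranging gives the asserted identity $5F_n^2 - L_n^2 = 4(-1)^{n+1}$.

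I expect no genuine obstacle here — the argument is only a few lines — and the single point that needs care is the sign produced by Catalan's identity, which one can pin down by checking the base cases $n=0,1$, since that lone parity-dependent constant is the entire content of the proposition.

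For robustness I would note two essentially equivalent alternatives. One stays inside the identity toolbox of this section: using $F_{2n}=F_{n+1}^2-F_{n-1}^2$ (Proposition~\ref{prop:splitdiff} with $m=n$), $F_{n+1}^2+F_n^2=F_{2n+1}$ (Proposition~\ref{prop:fsumsq}), and $F_{2n}=L_nF_n$ (Proposition~\ref{prop:f2m}), one writes both $L_n^2$ and $5F_n^2$ in terms of $L_{2n}$, whose difference is again $\pm 4(-1)^n$. The other is simply Binet's formulas: writing $\varphi,\psi$ for the roots of $x^2=x+1$ (so $\varphi\psi=-1$), one has $\sqrt5\,F_n - L_n = -2\psi^n$ and $\sqrt5\,F_n + L_n = 2\varphi^n$, whence $5F_n^2 - L_n^2 = (-2\psi^n)(2\varphi^n) = -4(\varphi\psi)^n = -4(-1)^n$.
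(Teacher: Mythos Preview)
The paper does not supply its own proof of this proposition; it simply cites Vajda, so there is no in-paper argument to compare against. Your derivation is correct and self-contained, using only identities already recorded in Section~\ref{sec:note} (Corollary~\ref{cor:2fm1} and Catalan's identity with $r=1$). One point worth flagging: the displayed statement reads $5F_n^2 - L_n^2 = 4(-1)^n + 1$, which is evidently a typographical slip (it fails for $n=0,1,2,\dots$); the identity you actually establish, $5F_n^2 - L_n^2 = 4(-1)^{n+1}$, is the correct one and is precisely what is needed for Corollary~\ref{cor:sqr}. Your remark about pinning down the sign by checking small cases is exactly the right safeguard here.
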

\begin{proof}
For the proof see~\cite{Vajda}.%
\end{proof}
\begin{corollary}\label{cor:sqr} $\displaystyle \lim_{n\to\infty} \dfrac{L_n}{F_n} = \sqrt{5}.$\end{corollary}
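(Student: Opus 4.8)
The statement to prove is Corollary~\ref{cor:sqr}: $\lim_{n\to\infty} L_n/F_n = \sqrt{5}$.

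The plan is to deduce this directly from Proposition~\ref{prop:5diff}, the identity $5F_n^2 - L_n^2 = 4(-1)^n + 1$. First I would divide both sides by $F_n^2$ to obtain
\[
5 - \left(\frac{L_n}{F_n}\right)^2 = \frac{4(-1)^n + 1}{F_n^2}.
\]
Since $|4(-1)^n + 1| \le 5$ for all $n$ and $F_n \to \infty$ as $n \to \infty$ (the Fibonacci numbers grow without bound), the right-hand side tends to $0$. Hence $(L_n/F_n)^2 \to 5$. Because $L_n$ and $F_n$ are both positive for $n \ge 1$, the ratio $L_n/F_n$ is positive, so taking the positive square root gives $L_n/F_n \to \sqrt{5}$.

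Strictly speaking one should note that $L_n/F_n$ is bounded (e.g.\ from the identity it satisfies $(L_n/F_n)^2 \le 5 + 5/F_n^2$, which is bounded), so that the continuity of the square-root function on a neighborhood of $5$ may be applied; alternatively, write $L_n/F_n = \sqrt{5 - (4(-1)^n+1)/F_n^2}$ directly for $n$ large enough that the radicand is positive, and pass to the limit inside the continuous square root. There is no real obstacle here: the only thing to be careful about is invoking the positivity of $L_n$ and $F_n$ to pin down the sign of the limit as $+\sqrt{5}$ rather than $-\sqrt{5}$, and confirming $F_n \to \infty$, both of which are immediate from the standard recurrences and initial values.
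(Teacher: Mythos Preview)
Your proof is correct and follows precisely the approach the paper intends: the corollary is stated immediately after Proposition~\ref{prop:5diff} with no separate proof, so the paper treats it as an immediate consequence of dividing $5F_n^2 - L_n^2 = 4(-1)^n + 1$ by $F_n^2$ and letting $n\to\infty$, which is exactly what you do.
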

\begin{proposition}\label{prop:15sum} $\displaystyle F_{m} = \frac{1}{5}(L_{m-1} + L_{m+1}).$
\end{proposition}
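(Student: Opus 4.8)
The plan is to derive this identity directly from Corollary~\ref{cor:2fm1}, which states $L_m = F_{m+1} + F_{m-1}$ for every index $m$. First I would instantiate that corollary at the two indices $m-1$ and $m+1$, obtaining $L_{m-1} = F_m + F_{m-2}$ and $L_{m+1} = F_{m+2} + F_m$. Adding these gives $L_{m-1} + L_{m+1} = 2F_m + F_{m-2} + F_{m+2}$, so the entire problem reduces to the elementary claim $F_{m-2} + F_{m+2} = 3F_m$.

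To finish, I would just unwind the Fibonacci recurrence. Writing $F_{m+2} = F_{m+1} + F_m$ and $F_{m-2} = F_m - F_{m-1}$, the sum becomes $F_{m-2} + F_{m+2} = 2F_m + (F_{m+1} - F_{m-1})$, and one more use of $F_{m+1} = F_m + F_{m-1}$ gives $F_{m+1} - F_{m-1} = F_m$. Hence $F_{m-2} + F_{m+2} = 3F_m$, so $L_{m-1} + L_{m+1} = 5F_m$, and dividing by $5$ yields the asserted formula.

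There is essentially no obstacle here; the argument is pure bookkeeping with index shifts. The only point worth a remark is that Corollary~\ref{cor:2fm1} (equivalently Proposition~\ref{prop:2fm1}) is being applied at shifted indices, which is legitimate for all $m$ once one adopts the convention $F_{-n} = (-1)^{n+1}F_n$ of Proposition~\ref{prop:fneg} together with the analogous extension of $L_n$; in any event, the identity is only needed for large $m$ in the sequel, where no negative indices arise. As an alternative I could give a one-line proof straight from Binet's formulae $F_m = (\varphi^m - \psi^m)/\sqrt5$ and $L_m = \varphi^m + \psi^m$, using $\varphi + \varphi^{-1} = \sqrt5$ and $\psi + \psi^{-1} = -\sqrt5$ to get $L_{m-1} + L_{m+1} = \varphi^m(\varphi + \varphi^{-1}) + \psi^m(\psi + \psi^{-1}) = \sqrt5\,(\varphi^m - \psi^m) = 5F_m$; I would keep whichever version meshes better with the level of the surrounding exposition.
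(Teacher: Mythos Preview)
Your proof is correct. The paper itself does not supply an argument for this proposition; it simply cites Vajda's book, so your derivation from Corollary~\ref{cor:2fm1} (plus the Fibonacci recurrence) is more self-contained than what appears in the paper. Either of your two versions---the index-shift computation or the Binet-formula one-liner---would serve perfectly well as a replacement for the citation.
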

\begin{proof}
For the proof see~\cite{Vajda}.%
\end{proof}
\begin{proposition}\label{prop:longsum} $F_{2n+2} = 2 F_{2n}  + F_{2n-2} + \cdots + F_2 + 1.$
\end{proposition}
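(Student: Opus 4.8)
The plan is to prove this by induction on $n$, or equivalently by reducing it to the classical identity $F_2 + F_4 + \cdots + F_{2n} = F_{2n+1} - 1$ for the sum of consecutive even-indexed Fibonacci numbers. I would present the telescoping route since it is shortest: writing $F_{2i} = F_{2i+1} - F_{2i-1}$ for each $i$ (an instance of the Fibonacci recurrence), the sum $F_2 + F_4 + \cdots + F_{2n-2}$ collapses to $F_{2n-1} - F_1 = F_{2n-1} - 1$. Substituting this into the right-hand side of the claimed identity gives $2F_{2n} + (F_{2n-1} - 1) + 1 = 2F_{2n} + F_{2n-1}$, and then two applications of the recurrence, $F_{2n} + F_{2n-1} = F_{2n+1}$ followed by $F_{2n+1} + F_{2n} = F_{2n+2}$, finish the argument.

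Alternatively, a direct induction works just as well. The base case $n = 2$ reads $F_6 = 2F_4 + F_2 + 1$, i.e. $8 = 6 + 1 + 1$; one can also start at $n = 1$ under the convention that the displayed sum is $2F_2 + 1 = 3 = F_4$. For the inductive step, let $S_n$ denote the right-hand side. Then $S_{n+1} - S_n = 2F_{2n+2} - F_{2n}$, so by the induction hypothesis $S_{n+1} = F_{2n+2} + 2F_{2n+2} - F_{2n} = 3F_{2n+2} - F_{2n}$; and $3F_{2n+2} - F_{2n} = 2F_{2n+2} + F_{2n+1} = F_{2n+2} + F_{2n+3} = F_{2n+4}$, which is exactly $S_{n+1} = F_{2(n+1)+2}$, completing the induction.

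There is no genuine obstacle here: the only point requiring care is the indexing of the finite sum and the choice of base case, since the ellipsis ``$F_{2n-2} + \cdots + F_2 + 1$'' degenerates for very small $n$. I would state the identity for $n \geq 1$ (or $n \geq 2$) and, to remove ambiguity, spell the right-hand side out as $2F_{2n} + \sum_{i=1}^{n-1} F_{2i} + 1$. Either argument is only a few lines; I would include the telescoping version in the paper, possibly with a remark that it is a standard Fibonacci identity.
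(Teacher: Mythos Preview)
Your proposal is correct; both the telescoping argument and the induction are valid and cleanly written. Note, however, that the paper itself does not supply a proof of this proposition at all: unlike the surrounding Fibonacci and Lucas identities, which each carry a one-line ``For the proof see~\cite{Vajda}'' or ``see~\cite{BQ}'', Proposition~\ref{prop:longsum} is simply stated and then used in the spanning-tree count (Theorem~\ref{thm:spanningtrees}). So there is no proof in the paper to compare yours against; your telescoping version would be a perfectly appropriate (and short) addition.
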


\section{Maximal resistance distance in the straight linear 2-tree}

When determining the resistance distance between node $1$ and node $n$ we use the following algorithm, demonstrated in Figures ~\ref{fig:2treepost}, ~\ref{fig:2treepostnext}, ~\ref{fig:afterkth}, and ~\ref{fig:2treefinalt} 
\begin{itemize}
\item First perform the $\Delta$--Y transform on the leftmost triangle (defined by the vertices $1$, $2$, and $3$).  This results in a new graph node $*$ as shown in Figure~\ref{fig:2treepost}.  
\item Next, sum the weight between vertices $2$ and $*$ with the weight between vertices $2$ and $4$, delete vertex 2 and rename vertex * as vertex 2 as shown in Figure~\ref{fig:2treepostnext}.  
\item Perform a $\Delta$-Y transform on the new left-most triangle.  
\end{itemize}
We continue in this manner until all triangles have been removed and we are left with a pair of parallel edges and a long tail, as in Figure~\ref{fig:2treefinalt}.	 \begin{figure}
\begin{subfigure}[b]{\textwidth}
\begin{center}
\begin{tikzpicture}[line cap=round,line join=round,>=triangle 45,x=1.0cm,y=1.0cm, scale = 1.2]
\draw [line width=.8pt] (-2.,0.)-- (-1.,0.);
\draw [line width=.8pt,dotted] (-1.,0.)-- (0.,0.);
\draw [line width=.8pt] (0.,0.)-- (1.,0.);
\draw [line width=.8pt] (1.,0.)-- (2.,0.);
\draw [line width=.8pt] (2.,0.)-- (1.5,0.866025403784435);
\draw [line width=.8pt] (1.5,0.866025403784435)-- (1.,0.);
\draw [line width=.8pt] (1.,0.)-- (0.5,0.8660254037844366);
\draw [line width=.8pt] (0.5,0.8660254037844366)-- (0.,0.);
%\draw [line width=.8pt,dotted] (0.,0.)-- (-0.5,0.8660254037844378);
\draw [line width=.8pt] (-0.5,0.8660254037844378)-- (-1.,0.);
\draw [line width=.8pt] (-1.,0.)-- (-1.5,0.8660254037844385);
\draw [line width=.8pt] (-1.5,0.8660254037844385)-- (-2.,0.);
\draw [line width=.8pt] (-1.5,0.8660254037844385)-- (-0.5,0.8660254037844378);
\draw [line width=.8pt,dotted] (-0.5,0.8660254037844378)-- (0.5,0.8660254037844366);
\draw [line width=.8pt] (0.5,0.8660254037844366)-- (1.5,0.866025403784435);
\draw [line width=.8pt,dashed,red] (-3.,0.)-- (-2.5,0.28867513459481287);
\draw [line width=.8pt,dashed] (-2.5,0.28867513459481287)-- (-2.,0.);
\draw [line width=.8pt] (-2.5,0.866025403784439)-- (-1.5,0.8660254037844385);
\draw [line width=.8pt,dashed] (-2.5,0.866025403784439)-- (-2.5,0.28867513459481287);
\begin{scriptsize}
\draw [fill=black] (-3.,0.) circle (1.5pt);
\draw[color=black] (-3.025029469669545,-0.22018097717087654) node {$1$};
\draw [fill=black] (-2.,0.) circle (1.5pt);
\draw[color=black] (-1.999600161545304,-0.22571778486972866) node {$3$};
\draw [fill=black] (-1.5,0.8660254037844385) circle (1.5pt);
\draw[color=black] (-1.517959122880888,1.1015212926292562) node {$4$};
\draw [fill=black] (-1.,0.) circle (1.5pt);
\draw[color=black] (-1.0207812765176196,-0.22232820796628497) node {$5$};
\draw [fill=black] (-0.5,0.8660254037844378) circle (1.5pt);
\draw[color=black] (-0.49252981475664714,1.1015212926292562) node {$6$};
\draw [fill=black] (0.,0.) circle (1.5pt);
\draw[color=black] (-0.04196239148993525,-0.22018097717087654) node {$n-4$};
\draw [fill=black] (0.5,0.8660254037844366) circle (1.5pt);
\draw[color=black] (0.4862890702710373,1.100447677231552) node {$n-3$};
\draw [fill=black] (1.,0.) circle (1.5pt);
\draw[color=black] (0.9834669166343056,-0.22018097717087654) node {$n-2$};
\draw [fill=black] (1.5,0.866025403784435) circle (1.5pt);
\draw[color=black] (1.6049392245883911,1.105984484930404) node {$n-1$};
\draw [fill=black] (2.,0.) circle (1.5pt);
\draw[color=black] (1.9778226093608422,-0.222018097717087654) node {$n$};
\draw [fill=black] (-3.,0.) circle (1.5pt);
\draw [fill=black] (-2.5,0.28867513459481287) circle (1.5pt);
\draw[color=black] (-2.7298301233913547,0.41790175387976375) node {$\ast$};
\draw [fill=black] (-2.5,0.866025403784439) circle (1.5pt);
\draw[color=black] (-2.3880203540166076,1.10149108695327) node {$2$};
\end{scriptsize}
\end{tikzpicture}
\end{center}
\caption{A linear 2-tree after the first $\Delta$--Y transformation.}
\label{fig:2treepost}
\end{subfigure}
\vspace{.5cm}

\begin{subfigure}[b]{\textwidth}
\begin{center}

%\begin{tikzpicture}[auto,node distance=1.5cm, thick,main node/.style={circle,draw, fill=red!10!}]
%\node[main node] (1) [] {n};
%\node[main node] (2) [above left of=1] {n-1};
%\node[main node] (3) [below left of=2] {n-2};
%\node[main node] (4) [above left of=3] {n-3};
%\node[main node] (5) [below left of=4] {n-4};
%\node[main node] (6) [above left of=5] {6};
%\node[main node] (7) [below left of=6] {5};
%\node[main node] (8) [above left of=7] {4};
%\node[main node] (9) [below left of=8] {3};
%\node[main node] (10) [above left of=9] {2};
%\node[main node] (11) [below left of=10] {*};
%\node[main node] (12) [left of=11] {1};
%\foreach \s/\t in {2/1, 2/3, 3/1, 2/4, 3/5,3/4, 4/5, 6/7, 6/8, 7/8, 7/9, 8/9, 8/10} {\path[draw] (\s) edge node[above]{1} (\t);}  
%\foreach \s/\t in {4/6,5/7,5/6} {\path[draw,dotted] (\s) edge (\t);}  
%\foreach \s/\t in { 9/11} {\path[draw,dashed] (\s) edge node[above]{$r_b$}(\t);}  
%\foreach \s/\t in {10/11} {\path[draw,dashed] (\s) edge node[above]{$r_s$}(\t);}  
%\foreach \s/\t in {11/12} {\path[draw,dashed,red] (\s) edge node[above]{$r_t$}(\t);}  
%
%
%\end{tikzpicture}\\

\begin{tikzpicture}[line cap=round,line join=round,>=triangle 45,x=1.0cm,y=1.0cm, scale = 1.2]
\draw [line width=.8pt] (-2.,0.)-- (-1.,0.);
\draw [line width=.8pt,dotted] (-1.,0.)-- (0.,0.);
\draw [line width=.8pt] (0.,0.)-- (1.,0.);
\draw [line width=.8pt] (1.,0.)-- (2.,0.);
\draw [line width=.8pt] (2.,0.)-- (1.5,0.866025403784435);
\draw [line width=.8pt] (1.5,0.866025403784435)-- (1.,0.);
\draw [line width=.8pt] (1.,0.)-- (0.5,0.8660254037844366);
\draw [line width=.8pt] (0.5,0.8660254037844366)-- (0.,0.);
%\draw [line width=.8pt,dotted] (0.,0.)-- (-0.5,0.8660254037844378);
\draw [line width=.8pt] (-0.5,0.8660254037844378)-- (-1.,0.);
\draw [line width=.8pt] (-1.,0.)-- (-1.5,0.8660254037844385);
\draw [line width=.8pt] (-1.5,0.8660254037844385)-- (-2.,0.);
\draw [line width=.8pt] (-1.5,0.8660254037844385)-- (-0.5,0.8660254037844378);
\draw [line width=.8pt,dotted] (-0.5,0.8660254037844378)-- (0.5,0.8660254037844366);
\draw [line width=.8pt] (0.5,0.8660254037844366)-- (1.5,0.866025403784435);
\draw [line width=.8pt,dashed,red] (-3.5,0.866025403784435)-- (-2.5,0.866025403784435);
\draw [line width=.8pt,dashed] (-2.5,0.866025403784435)-- (-2.,0.);
\draw [line width=.8pt,dashed] (-2.5,0.866025403784439)-- (-1.5,0.8660254037844385);
\begin{scriptsize}
\draw [fill=black] (-3.5,0.866025403784435) circle (1.5pt);
\draw[color=black] (-3.5,1.1) node {$1$};
\draw [fill=black] (-2.,0.) circle (1.5pt);
\draw[color=black] (-1.999600161545304,-0.22571778486972866) node {$3$};
\draw [fill=black] (-1.5,0.8660254037844385) circle (1.5pt);
\draw[color=black] (-1.517959122880888,1.1015212926292562) node {$4$};
\draw [fill=black] (-1.,0.) circle (1.5pt);
\draw[color=black] (-1.0207812765176196,-0.22232820796628497) node {$5$};
\draw [fill=black] (-0.5,0.8660254037844378) circle (1.5pt);
\draw[color=black] (-0.49252981475664714,1.1015212926292562) node {$6$};
\draw [fill=black] (0.,0.) circle (1.5pt);
\draw[color=black] (-0.04196239148993525,-0.22018097717087654) node {$n-4$};
\draw [fill=black] (0.5,0.8660254037844366) circle (1.5pt);
\draw[color=black] (0.4862890702710373,1.100447677231552) node {$n-3$};
\draw [fill=black] (1.,0.) circle (1.5pt);
\draw[color=black] (0.9834669166343056,-0.22018097717087654) node {$n-2$};
\draw [fill=black] (1.5,0.866025403784435) circle (1.5pt);
\draw[color=black] (1.6049392245883911,1.105984484930404) node {$n-1$};
\draw [fill=black] (2.,0.) circle (1.5pt);
\draw[color=black] (1.9778226093608422,-0.222018097717087654) node {$n$};
\draw [fill=black] (-2.5,0.866025403784439) circle (1.5pt);
\draw[color=black] (-2.3880203540166076,1.10149108695327) node {$2$};
\end{scriptsize}
\end{tikzpicture}
%
%\begin{tikzpicture}[auto,node distance=1.5cm, thick,main node/.style={circle,draw, fill=red!10!}]
%\node[main node] (1) [] {n};
%\node[main node] (2) [above left of=1] {n-1};
%\node[main node] (3) [below left of=2] {n-2};
%\node[main node] (4) [above left of=3] {n-3};
%\node[main node] (5) [below left of=4] {n-4};
%\node[main node] (6) [above left of=5] {6};
%\node[main node] (7) [below left of=6] {5};
%\node[main node] (8) [above left of=7] {4};
%\node[main node] (9) [below left of=8] {3};
%\node[main node] (10) [above left of=9] {2};
%\node[main node] (11) [left of=10] {1};
%\foreach \s/\t in {2/1, 2/3, 3/1, 2/4, 3/5,3/4, 4/5, 6/7, 6/8, 7/8, 8/9, 9/7} {\path[draw] (\s) edge node[above]{1} (\t);}  
%\foreach \s/\t in {4/6,5/7,5/6} {\path[draw,dotted] (\s) edge (\t);}  
%\foreach \s/\t in { 9/10} {\path[draw,dashed] (\s) edge node[below]{$r_b$}(\t);}  
%\foreach \s/\t in { 8/10} {\path[draw,dashed] (\s) edge node[above]{$r_s+1$}(\t);}  
%\foreach \s/\t in {10/11} {\path[draw,dashed,red] (\s) edge node[above]{$r_t$}(\t);}  
%
%
%\end{tikzpicture}\\
\end{center}
\caption{A linear 2-tree after the first $\Delta$--Y transformation once two edges are merged, a vertex is removed, and a vertex is renamed.  }
\label{fig:2treepostnext}
\end{subfigure}
\caption{A linear 2-tree after step one and step two of the algorithm . The dashed edges are the edges with new weights after the transformation.  The dashed red edge (from node 1 to node 2) is the ``tail'' of the transformation.}
\label{fig:combfigures}
\end{figure}
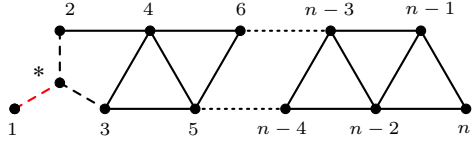
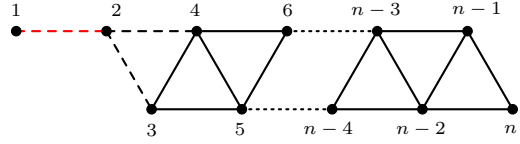

\begin{remark}\label{rem26}
 Notice that the $k$th $\Delta$--Y transformation transforms a triangle with node labels $k,k +1$,  and $ k+2$ into a $Y$ with nodes labeled $k, k +1, k+2$, and $\ast$. We adopt the convention that the edge $R_A^k = r(k,k+1)$, $R_B^k = r(k,k+2)$, and $R_C^k = r(k+1,k+2)$.  Thus, in the subsequent, equivalent network, $t_k = R_3^k = r( \ast, k)$, $s_k = R_2^k = r( \ast, k+1)$, and $b_k = R_1^k = r( \ast, k+2)$.  
%For convenience in the proofs in the remainder of this paper we use the following notation.  
%The application of the $k$th $\Delta$--Y transform results in 3 new resistances which we label $b_k=R_2^k$, $s_k = R_1^k$ and $t_k=R_3^k$, where $t_k$ is the "tail" resistance.  
We call $t_k$ the tail resistance because after a sequence of $\Delta$--Y transforms several resistors are left in a tail.  This resistance will never be involved in another $\Delta$-Y transformation. Notice that when performing the merge step described in the second bullet above, $s_k$ will always be on the edge that is merged. It is easy to verify that $R_C^k=1$  for every $k$.  See Figure~\ref{fig:afterkth}.%Occasionally we will add an additional subscript (e.g.~ $r_{s_k}$ when we want to emphasize that a particular resistance resulted from the $k$th $\Delta$--Y transformation.
\end{remark}
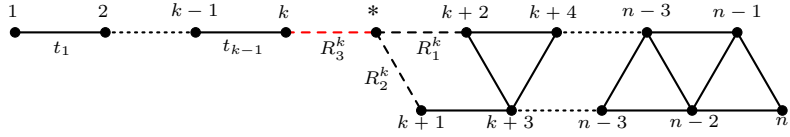
\begin{figure}[h!]
\begin{center}
\begin{tikzpicture}[line cap=round,line join=round,>=triangle 45,x=1.0cm,y=1.0cm,scale = 1.2]
\draw [line width=.8pt] (-3.,2.)-- (-2.,2.);
\draw [line width=.8pt,dotted] (-2.,2.)-- (-1.,2.);
\draw [line width=.8pt] (-1.,2.)-- (0.,2.);
\draw [line width=.8pt,dashed,red] (0.,2.)-- (1.,2.);
\draw [line width=.8pt,dashed] (1.,2.)-- (2.,2.);
\draw [line width=.8pt,dashed] (1.,2.)-- (1.5,1.1339745962155612);
%\draw [line width=.8pt] (1.5,1.1339745962155612)-- (2.,2.);
\draw [line width=.8pt] (2.,2.)-- (2.5,1.1339745962155607);
\draw [line width=.8pt] (2.5,1.1339745962155607)-- (1.5,1.1339745962155612);
\draw [line width=.8pt] (2.,2.)-- (3.,2.);
\draw [line width=.8pt] (3.,2.)-- (2.5,1.1339745962155607);
\draw [line width=.8pt,dotted] (2.5,1.1339745962155607)-- (3.5,1.1339745962155598);
%\draw [line width=.8pt,dotted] (3.5,1.1339745962155598)-- (3.,2.);
\draw [line width=.8pt,dotted] (3.,2.)-- (4.,2.);
\draw [line width=.8pt] (4.,2.)-- (3.5,1.1339745962155598);
\draw [line width=.8pt] (3.5,1.1339745962155598)-- (4.5,1.1339745962155583);
\draw [line width=.8pt] (4.5,1.1339745962155583)-- (4.,2.);
\draw [line width=.8pt] (4.,2.)-- (5.,2.);
\draw [line width=.8pt] (5.,2.)-- (4.5,1.1339745962155583);
\draw [line width=.8pt] (4.5,1.1339745962155583)-- (5.5,1.1339745962155563);
\draw [line width=.8pt] (5.5,1.1339745962155563)-- (5.,2.);
\begin{scriptsize}
\draw [fill=black] (-3.,2.) circle (1.5pt);
\draw[color=black] (-3.0254981493173747,2.2417294054505863) node {$1$};
\draw [fill=black] (-2.,2.) circle (1.5pt);
\draw[color=black] (-2.0254981493173747,2.2417294054505863) node {$2$};
\draw [fill=black] (-1.,2.) circle (1.5pt);
\draw[color=black] (-1.0097018789148775,2.2417294054505863) node {$k-1$};
\draw[color=black] (-2.4752751695160222,1.8242930709498067) node {$t_1$};
\draw [fill=black] (0.,2.) circle (1.5pt);
\draw[color=black] (-0.025649241962458255,2.23114819430056) node {$k$};
\draw[color=black] (-0.4700601102635509,1.8242930709498067) node {$t_{k-1}$};
\draw [fill=black] (1.,2.) circle (1.5pt);
\draw[color=black] (0.968984606139987,2.2417294054505863) node {$\ast$};
\draw[color=black] (0.5245737378388945,1.8242930709498067) node {$R_3^k$};
\draw [fill=black] (2.,2.) circle (1.5pt);
\draw[color=black] (1.9847808765424844,2.2205669831505346) node {$k+2$};
\draw[color=black] (1.5874639524912618,1.8242930709498067) node {$R_1^k$};
\draw [fill=black] (1.5,1.1339745962155612) circle (1.5pt);
\draw[color=black] (1.4980451636412877,1.0037277008975454) node {$k+1$};
\draw [fill=black] (2.5,1.1339745962155607) circle (1.5pt);
\draw[color=black] (2.4820978005937073,1.0037277008975454) node {$k+3$};
\draw [fill=black] (3.,2.) circle (1.5pt);
\draw[color=black] (2.958252302344878,2.2205669831505346) node {$k+4$};
\draw [fill=black] (3.5,1.1339745962155598) circle (1.5pt);
\draw[color=black] (3.5084752821462306,0.9931464897475193) node {$n-3$};
\draw [fill=black] (4.,2.) circle (1.5pt);
\draw[color=black] (3.9952109950474273,2.23114819430056) node {$n-3$};
\draw [fill=black] (4.5,1.1339745962155583) circle (1.5pt);
\draw[color=black] (4.524271552548727,1.0037277008975454) node {$n-2$};
\draw [fill=black] (5.,2.) circle (1.5pt);
\draw[color=black] (5.000426054299898,2.2205669831505346) node {$n-1$};
\draw [fill=black] (5.5,1.1339745962155563) circle (1.5pt);
\draw[color=black] (5.497742978351121,1.0143089120475712) node {$n$};
\draw[color=black] (1.01118309566653382,1.4909849197239878) node {$R_2^k$};
%\draw[color=black] (1.9101908925670551,1.55924128627391) node {$1$};
\end{scriptsize}
\end{tikzpicture}
\end{center}
\caption{A straight linear 2-tree after the $k$th $\Delta-Y$ transformation.  The dashed edges are the edges with new weights after the transformation. The dashed red edge (from node $k$ to node $\ast$) is the newest ``tail'' of the transformation.}
\label{fig:afterkth}
\end{figure}

For the remainder of this section we assume that all edge weights are one.

%\section{Preliminary results}\label{sec:prelim}
\begin{lemma}
When performing the $k$th $\Delta$--Y transformation, $R_A^k+R_B^k+R_C^k = \dfrac{F_{2k+2}}{F_{2k}}.$
\end{lemma}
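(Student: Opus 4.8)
The plan is to reduce the statement to a simple scalar recurrence for the quantity $\sigma_k := R_A^k + R_B^k + R_C^k$ and then to verify, by induction on $k$, that $F_{2k+2}/F_{2k}$ satisfies that recurrence with the correct initial value. Throughout I would use the standing conventions of Remark~\ref{rem26}: the $k$th $\Delta$--Y transform acts on the triangle on vertices $k,k+1,k+2$; it produces the $Y$ with edges $t_k = R_3^k$, $s_k = R_2^k$, $b_k = R_1^k$; and $R_C^k = r(k+1,k+2) = 1$ for every $k$.

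First I would track how the leftmost triangle changes from step $k$ to step $k+1$. By the $\Delta$--Y formulas of Definition~\ref{def:dy}, after the $k$th transform the new $Y$-edges satisfy $s_k = R_A^k R_C^k/\sigma_k$ and $b_k = R_B^k R_C^k/\sigma_k$. The subsequent merge step is a series transformation that combines the edge of resistance $s_k$ (from $\ast$ to vertex $k+1$) with the unit edge from $k+1$ to $k+3$, deletes vertex $k+1$, and renames $\ast$ as vertex $k+1$; the edge $r(k+2,k+3)$ is untouched and still has weight $1$. Hence the new leftmost triangle, on vertices $k+1,k+2,k+3$, has $R_A^{k+1} = b_k$, $R_B^{k+1} = s_k + 1$ and $R_C^{k+1} = 1$, so that, using $R_C^k = 1$ and $R_A^k + R_B^k = \sigma_k - R_C^k = \sigma_k - 1$,
\[
\sigma_{k+1} = b_k + s_k + 2 = \frac{R_C^k (R_A^k + R_B^k)}{\sigma_k} + 2 = \frac{\sigma_k - 1}{\sigma_k} + 2 = 3 - \frac{1}{\sigma_k}.
\]

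It then remains to solve this recurrence. For $k=1$ all three triangle edges have weight $1$, so $\sigma_1 = 3 = F_4/F_2$. Assuming inductively that $\sigma_k = F_{2k+2}/F_{2k}$, the recurrence gives $\sigma_{k+1} = (3F_{2k+2} - F_{2k})/F_{2k+2}$, and two applications of the Fibonacci recurrence give $3F_{2k+2} - F_{2k} = 2F_{2k+2} + (F_{2k+2} - F_{2k}) = 2F_{2k+2} + F_{2k+1} = F_{2k+2} + F_{2k+3} = F_{2k+4}$; hence $\sigma_{k+1} = F_{2k+4}/F_{2k+2}$, which closes the induction and proves the lemma.

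The algebra above is routine; the one point I expect to require care is the justification that after the $k$th transform the network on the vertices $k+1,\dots,n$ is again a straight linear $2$-tree all of whose edges have weight $1$ except for the two edges incident to vertex $k+1$ (namely $r(k+1,k+2) = b_k$ and $r(k+1,k+3) = s_k + 1$), together with a detached "tail" contributing nothing to subsequent transformations. This structural statement is really an inductive invariant on the whole sequence of transformations, and it is what legitimizes the identifications $R_A^{k+1} = b_k$, $R_B^{k+1} = s_k + 1$, $R_C^{k+1} = 1$ used above, so I would state and verify it alongside the induction on $\sigma_k$.
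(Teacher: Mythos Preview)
Your proposal is correct and follows essentially the same approach as the paper: both arguments derive the recurrence $\sigma_{k+1} = 3 - 1/\sigma_k$ by identifying $R_A^{k+1}=b_k$, $R_B^{k+1}=s_k+1$, $R_C^{k+1}=1$ and then verify by induction that $F_{2k+2}/F_{2k}$ satisfies it. Your added remark about carrying the structural invariant (that the remaining graph is again a straight linear $2$-tree with unit edges except for the two leftmost) is a nice explicit statement of something the paper leaves implicit in the algorithm description.
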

\begin{proof}
We proceed by induction. When performing the first $\Delta$--Y transformation, $R_A^1= R_B^1= R_C^1 =1.$  Hence $R_A^1+R_B^1+R_C^1 = 3= 3/1 = F_4/F_2$ as expected.  We now assume that upon performing the $k-1$st transformation $R_A^{k-1}+R_B^{k-1}+R_C^{k-1} = \dfrac{F_{2k}}{F_{2k-2}}$ and will show that $R_A^k+R_B^k+R_C^k = \dfrac{F_{2k+2}}{F_{2k}}.$ By Definition~\ref{def:dy} the $k-1$st $\Delta$--Y transformation yields
\[R_1^{k-1}=\frac{R_B^{k-1}R_C^{k-1}}{R_A^{k-1}+R_B^{k-1}+R_C^{k-1}} = \frac{R_B^{k-1}}{R_A^{k-1}+R_B^{k-1}+R_C^{k-1}}\]
and\[R_2^{k-1}=\frac{R_A^{k-1}R_C^{k-1}}{R_A^{k-1}+R_B^{k-1}+R_C^{k-1}} = \frac{R_A^{k-1}}{R_A^{k-1}+R_B^{k-1}+R_C^{k-1}}.\]
Hence
\[\begin{array}{r}
R_A^{k}+R_B^{k}+R_C^{k} = R_1^{k-1} +(R_2^{k-1} + 1)+R_C^k = 2 + \ds{\frac{R_A^{k-1}+R_B^{k-1}}{R_A^{k-1}+R_B^{k-1}+R_C^{k-1}}}\\[4mm]
 \ds{=3 - \frac{R_C^{k-1}}{R_A^{k-1}+R_B^{k-1}+R_C^{k-1}}
=3 - \frac{1}{R_A^{k-1}+R_B^{k-1}+R_C^{k-1}}}\\[6mm]
\ds{=3 - \frac{F_{2k-2}}{F_{2k}}
= \frac{3F_{2k}-F_{2k-2}}{F_{2k}}}%\\[6mm]
%\ds{= \frac{2F_{2k}+F_{2k-1}+F_{2k-2}-F_{2k-2}}{F_{2k}}}\\[6mm]
\ds{%= \frac{2F_{2k}+F_{2k-1}}{F_{2k}}
%= \frac{F_{2k}+F_{2k+1}}{F_{2k}}
%= 
=\frac{F_{2k+2}}{F_{2k}}}\end{array}.
\]
\end{proof}
\begin{lemma}\label{lem3} Following the algorithm detailed for application of the $\Delta$--Y transform to the linear 2-tree with $m$ cells, for $1 \leq p \leq m-1$ after the $p$th 
$\Delta$--Y transform 
\[
{s_{p}} = \dfrac{F_{p}^2}{F_{2p+2}}, \ 
{b_p}=\dfrac{F_{p+1}}{L_{p+1}}, \text{ and }{t_p}=\dfrac{F_pF_{p+1}}{L_pL_{p+1}}.
\]
where $s_p$, $b_p$ and $t_p$ are as defined in Remark~\ref{rem26}.
\end{lemma}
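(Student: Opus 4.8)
The plan is to prove all three formulae simultaneously by induction on $p$, using the recursion for $\Delta$--Y transformations together with the previous lemma. The base case $p=1$ is immediate: after the first transformation all three resistors of the $Y$ equal $R_B^1 R_C^1 / (R_A^1+R_B^1+R_C^1) = 1/3$, and one checks directly that $F_1^2/F_4 = 1/3$, $F_2/L_2 = 1/3$, and $F_1 F_2/(L_1 L_2) = 1/3$. So the real work is the inductive step.

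For the inductive step I would set up the bookkeeping carefully using Remark~\ref{rem26}. After the $(p-1)$st transformation we have a $Y$ whose three legs are $t_{p-1}$, $s_{p-1}$, $b_{p-1}$. The merge step adds $1$ to the $s_{p-1}$ leg (the edge from $\ast$ to vertex $p$ gets combined with the existing unit edge from that vertex to vertex $p+1$), and together with the untouched unit edge $R_C = r(p+1,p+2) = 1$ this gives the next triangle with
\[
R_A^p = s_{p-1} + 1, \qquad R_B^p = b_{p-1}, \qquad R_C^p = 1.
\]
Then the $p$th $\Delta$--Y transformation produces, by Definition~\ref{def:dy},
\[
t_p = \frac{R_A^p R_B^p}{R_A^p+R_B^p+R_C^p}, \quad
s_p = \frac{R_A^p R_C^p}{R_A^p+R_B^p+R_C^p}, \quad
b_p = \frac{R_B^p R_C^p}{R_A^p+R_B^p+R_C^p},
\]
and crucially the denominator here is exactly the quantity $R_A^p + R_B^p + R_C^p = F_{2p+2}/F_{2p}$ computed in the previous lemma, so I do not have to re-derive it. Substituting the inductive hypotheses $s_{p-1} = F_{p-1}^2/F_{2p}$ and $b_{p-1} = F_p/L_p$ (note the index shift: the hypothesis at level $p-1$ reads $s_{p-1}=F_{p-1}^2/F_{2p}$ since $F_{2(p-1)+2}=F_{2p}$, and $b_{p-1}=F_p/L_p$), each of $t_p$, $s_p$, $b_p$ becomes an explicit rational expression in Fibonacci and Lucas numbers, and the goal reduces to three Fibonacci--Lucas identities.

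The main obstacle — really the only substantive step — is verifying those three identities. For $s_p$ one needs
\[
s_p = \frac{(s_{p-1}+1)\cdot 1}{F_{2p+2}/F_{2p}}
= \frac{F_{2p}}{F_{2p+2}}\left(\frac{F_{p-1}^2}{F_{2p}}+1\right)
= \frac{F_{p-1}^2 + F_{2p}}{F_{2p+2}},
\]
so it suffices to show $F_{p-1}^2 + F_{2p} = F_p^2$; by Proposition~\ref{prop:f2m} (or Corollary~\ref{cor:splitsum}) $F_{2p} = F_p F_{p+1} + F_p F_{p-1}$, and then $F_p^2 - F_{p-1}^2 = (F_p-F_{p-1})(F_p+F_{p-1}) = F_{p-2}F_{p+1}$, so one instead wants $F_{2p} = F_p^2 - F_{p-1}^2 = F_{p-2}F_{p+1}$... which is not quite an identity, so the cleaner route is to write $F_p^2 = F_{2p} - F_{p-1}^2$ using Proposition~\ref{prop:catalan} (Catalan with $n=p$, $r=1$: $F_p^2 - F_{p+1}F_{p-1} = (-1)^{p-1}$) combined with $F_{2p}=F_{p+1}^2-F_{p-1}^2$ from Proposition~\ref{prop:splitdiff}; I would experiment among these to find the shortest chain. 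For $b_p$ one needs $b_{p-1}\cdot F_{2p}/F_{2p+2} = F_{p+1}/L_{p+1}$, i.e. $(F_p/L_p)(F_{2p}/F_{2p+2}) = F_{p+1}/L_{p+1}$; using $F_{2p}=F_pL_p$ and $F_{2p+2}=F_{p+1}L_{p+1}$ (Proposition~\ref{prop:f2m}) the left side collapses to $F_p^2/(F_{p+1}L_{p+1})$, so this reduces to... an evident simplification once one also writes $F_{2p+2}$ correctly; again Proposition~\ref{prop:f2m} does the heavy lifting. For $t_p$ one has $t_p = R_A^p b_p = (s_{p-1}+1) b_p$, and since $s_{p-1}+1$ already equals $s_p \cdot F_{2p+2}/F_{2p}$ and $b_p$ is known, everything is determined; the target $t_p = F_p F_{p+1}/(L_p L_{p+1})$ should then follow from Proposition~\ref{prop:f2m} one more time. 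My expectation is that every one of these reductions is a two- or three-line manipulation from the identities collected in Section~\ref{sec:note}, so the proof is short once the substitutions are organized; the "hard part" is purely choosing the right identities so the algebra is clean rather than any conceptual difficulty.
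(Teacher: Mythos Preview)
Your overall strategy---induction on $p$, using the previous lemma for the denominator $R_A^p+R_B^p+R_C^p=F_{2p+2}/F_{2p}$---is exactly the paper's. The base case is fine. The inductive step, however, has a bookkeeping error in identifying the sides of the new triangle, and this is precisely why the identity you try to prove for $s_p$ is false.

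After the $(p-1)$st $\Delta$--Y transform, $s_{p-1}=r(\ast,p)$ and $b_{p-1}=r(\ast,p+1)$. The edge $\{p,p+1\}$ no longer exists: it was one of the three edges of the $(p-1)$st triangle and was consumed by that transform. The only surviving edge at vertex $p$ (besides the leg to $\ast$) is the original unit edge $\{p,p+2\}$. So the series merge combines $s_{p-1}$ with the edge $\{p,p+2\}$, not with an edge $\{p,p+1\}$. After relabeling $\ast$ as $p$, the new triangle on $p,p+1,p+2$ therefore has, in the conventions of Remark~\ref{rem26},
\[
R_A^p=r(p,p+1)=b_{p-1},\qquad R_B^p=r(p,p+2)=s_{p-1}+1,\qquad R_C^p=1,
\]
which is the swap of what you wrote. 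With the correct assignment,
\[
s_p=R_2^p=\frac{R_A^p R_C^p}{\text{sum}}=\frac{b_{p-1}}{F_{2p+2}/F_{2p}}=\frac{F_p}{L_p}\cdot\frac{F_{2p}}{F_{2p+2}}=\frac{F_p^2}{F_{2p+2}}
\]
(using $F_{2p}=F_pL_p$), and
\[
b_p=R_1^p=\frac{R_B^p R_C^p}{\text{sum}}=\frac{s_{p-1}+1}{F_{2p+2}/F_{2p}}=\frac{F_{p-1}^2+F_{2p}}{F_{2p+2}}=\frac{F_{p+1}^2}{F_{2p+2}}=\frac{F_{p+1}}{L_{p+1}},
\]
since $F_{p-1}^2+F_{2p}=F_{p-1}^2+F_pF_{p+1}+F_pF_{p-1}=F_{p-1}F_{p+1}+F_pF_{p+1}=F_{p+1}^2$. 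Then $t_p=R_3^p=R_A^p\cdot b_p=b_{p-1}b_p=\dfrac{F_pF_{p+1}}{L_pL_{p+1}}$. The identity you were chasing, $F_{p-1}^2+F_{2p}=F_p^2$, is false (take $p=2$: $1+3\neq1$); the moment you noticed ``not quite an identity'' was the signal that $R_A$ and $R_B$ were interchanged.
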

\begin{proof}  We show this result inductively.  After the first $\Delta$--Y transform ${s_1} = 1/3 = F_1^2/F_4$, ${b_1} = 1/3 = F_2/L_2$, and ${t_1} = 1/3= (F_1F_2)/(L_1L_2).$

Suppose after the $k-1$st $\Delta$--Y transformation ${s_{k-1}} = \dfrac{F_{k-1}^2}{F_{2k}}$, ${b_{k-1}}=\dfrac{F_k}{L_k}$.  
Let $R_A = {b_{k-1}}$, $R_B = {s_{k-1}}+1$, and $R_C=1$.  Then by Definition~\ref{def:dy} we have
    \[\begin{array}{c}
   \ds{ 
   R_1 = \frac{R_BR_C}{R_A+R_B+R_C}
     = \frac{{s_{k-1}}+1}{R_A+R_B+R_C}
     =\frac{1+\frac{F_{k-1}^2}{F_{2k}}}{\frac{F_{2k+2}}{F_{2k}}} \hspace*{3cm}
     }\\[6mm]
    \ds{
     =\frac{F_{2k}+F_{k-1}^2}{F_{2k+2}}
     =\frac{F_{k}L_k+F_{k-1}^2}{F_{k+1}L_{k+1}}
     =\frac{F_{k}F_{k+1}+F_kF_{k-1}+F_{k-1}^2}{F_{k+1}L_{k+1}}
     }\\[6mm]
     \ds{
       %   =\frac{F_{k}F_{k+1}+F_{k-1}\left(F_{k}+F_{k-1}\right)}{F_{k+1}L_{k+1}}
     =\frac{F_{k+1}\left(F_{k}+F_{k-1}\right)}{F_{k+1}L_{k+1}}
     =\frac{F_{k+1}}{L_{k+1}}
     ={b_{k}}
     }.\end{array}\]
     Similarly,
    \[\begin{array}{r}
    \ds{
    R_2 = \frac{R_AR_C}{R_A+R_B+R_C}
     =\frac{\frac{F_{k}}{L_k}}{\frac{F_{2k+2}}{F_{2k}}}
     =\frac{F_{2k}F_{k}}{F_{2k+2}L_k}
     =\frac{F_{k}^2}{F_{2k+2}}
     ={s_k}.
    }\\
     \end{array}\]
     Also, we have
    \[\begin{array}{r}
    \ds{
    R_3 = \frac{R_AR_B}{R_A+R_B+R_C}
    =R_1b_{k-1}
    %}\\[6mm]
    %\ds{
    =\dfrac{F_kF_{k+1}}{L_kL_{k+1}}
    ={t_k}
    }.\end{array}\]
    \end{proof}
%    \begin{lemma}Let $G$ be a graph with $n$ vertices, $m-2$ cells and whose adjacency matrix is a symmetric banded matrix with the first and second subdiagonals equal to one and first and second superdiagonals equal to one.  Starting with the cell with vertices $1,2,$ and $3$ and sequentially removing $k$ cells using the delta-wye transform, the three new resistances are given by $r_b^k=\dfrac{F_k}{L_k}

%	\section{Main Results}
	\begin{theorem}\label{thm:effRes}
%	\red{If you want we can change this to the closed form version but I am partial to the summation}
	Let $G_n$ be the linear 2-tree with $n$ vertices and $m=n-2$ cells. Then the resistance distance between nodes $1$ and $n$ is given by
	\[r(1,n)=\frac{2F_{m+1}^2}{L_{m+1}L_m}+\sum_{i=1}^{m-1}\frac{F_iF_{i+1}}{L_iL_{i+1}}
	 = \frac{m+1}{5} + \frac{4F_{m+1}}{5L_{m+1}}.\]
	\end{theorem}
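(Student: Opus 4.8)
The plan is to carry out the $\Delta$--Y reduction described in the section preamble explicitly and then track the resistances that accumulate in the ``tail.'' After $m-1$ successive $\Delta$--Y transforms (combined with the merge step), the original 2-tree collapses to a path from vertex $1$ through vertices $2, 3, \dots$ terminating at the last triangle, plus the accumulated tail edges $t_1, t_2, \dots, t_{m-1}$ hanging off that path. At that point only the final cell remains as a genuine triangle; I would perform one last $\Delta$--Y transform on it and then use series and parallel rules together with the Cut Vertex Theorem to peel the problem down to a single series sum. Concretely, the resistance $r(1,n)$ should be the sum of all the tail resistances $t_1, \dots, t_{m-1}$ produced along the way, plus the contribution from the last transformed cell; by Lemma~\ref{lem3} the latter contributes the leftover pieces $s_{m-1}$, $b_{m-1}$ together with the final triangle's resistances, which after the parallel/series bookkeeping should combine to give the term $\frac{2F_{m+1}^2}{L_{m+1}L_m}$. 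This yields the first expression
\[
r(1,n) = \frac{2F_{m+1}^2}{L_{m+1}L_m} + \sum_{i=1}^{m-1} \frac{F_iF_{i+1}}{L_iL_{i+1}},
\]
using the formula for $t_i$ from Lemma~\ref{lem3}.

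The second equality is a closed-form evaluation of this sum, and this is where the Fibonacci/Lucas identities do the work. First I would simplify the non-sum term: by Proposition~\ref{prop:f2m}, $F_{2m} = L_mF_m$ and $F_{2m+2} = L_{m+1}F_{m+1}$, so $\frac{2F_{m+1}^2}{L_{m+1}L_m} = \frac{2F_{m+1}^2 F_{m+1}F_m}{F_{2m+2}F_{2m}}$ — but more usefully I would aim to rewrite it as $\frac{2F_{m+1}}{5L_{m+1}} + (\text{something telescoping-friendly})$, since the target form has a lone $\frac{4F_{m+1}}{5L_{m+1}}$. The key step for the sum is to find a telescoping representation of $\frac{F_iF_{i+1}}{L_iL_{i+1}}$. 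The natural guess is
\[
\frac{F_iF_{i+1}}{L_iL_{i+1}} = \frac{1}{5} - \frac{c_i}{L_i} + \frac{c_{i+1}}{L_{i+1}}
\]
for some simple sequence $c_i$; comparing against $5F_iF_{i+1} = L_iL_{i+1} + (\text{lower order})$ — which follows from combining Proposition~\ref{prop:15sum}, Corollary~\ref{cor:2fm1}, and the identity $L_iL_{i+1} = L_{2i+1} + (-1)^i$ type relations — should pin down $c_i$ (I expect something like $c_i = F_i/L_i$ up to constants, or a constant multiple of $F_i$). Once the telescoping form is verified by a direct identity check, the sum collapses to $\frac{m-1}{5}$ plus boundary terms at $i=1$ and $i=m$, the $i=m$ boundary term combines with the simplified non-sum term, and the $i=1$ boundary term is a numerical constant that should vanish or absorb into the $\frac{m+1}{5}$ (note $\frac{m-1}{5} + \frac{2}{5} = \frac{m+1}{5}$, which is encouraging).

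\textbf{Main obstacle.} I expect the genuinely tricky part to be establishing the exact telescoping identity for $\frac{F_iF_{i+1}}{L_iL_{i+1}}$ — getting the constant and the correcting sequence $c_i$ exactly right, including the sign pattern $(-1)^i$ that typically appears in any identity mixing products of Fibonacci and Lucas numbers (cf.\ Propositions~\ref{prop:catalan}, \ref{prop:doca}, \ref{prop:5diff}). The presence of $(-1)^i$ terms is worrying because the final answer has \emph{no} alternating term, so any $(-1)^i$ contributions must cancel in pairs across the sum or telescope away cleanly; I would need to check the even/odd endpoints carefully, or perhaps split the sum into even and odd indexed parts. A secondary but routine obstacle is the careful bookkeeping in the reduction itself: making sure that the merge step (adding $1$ to $s_k$ before the next transform) and the identification of which leftover edges survive to the end are tracked without an off-by-one error in the index of the final cell. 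Once those two points are nailed down, the rest is substitution of the Lemma~\ref{lem3} formulas and algebraic simplification using $2F_{m+1} = F_m + L_m$ (Proposition~\ref{prop:2fm1}) and $L_{m+1} = 2F_m + F_{m+1}$ (Proposition~\ref{prop:lm1}).
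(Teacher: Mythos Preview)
Your approach matches the paper's: the first equality is exactly the $\Delta$--Y reduction followed by a parallel computation on the residual 4-cycle (after $m-1$ transforms the configuration is that of Figure~\ref{fig:2treefinalt}, and the paper simplifies $\bigl(1/(1+s_{m-1}) + 1/(1+b_{m-1})\bigr)^{-1}$ directly to $2F_{m+1}^2/(L_mL_{m+1})$---no further $\Delta$--Y is performed). For the second equality the paper proves $\sum_{i=1}^{m} \frac{F_iF_{i+1}}{L_iL_{i+1}} = \frac{m+1}{5} - \frac{F_{m+1}}{5L_{m+1}}$ by induction (Proposition~\ref{prop:rm1nA}) and then combines with the non-sum term (Proposition~\ref{prop:rm1nB}); your telescoping guess is precisely the induction step unpacked, with $c_i = -F_i/5$, and the identity
\[
\frac{F_iF_{i+1}}{L_iL_{i+1}} \;=\; \frac{1}{5} + \frac{F_i}{5L_i} - \frac{F_{i+1}}{5L_{i+1}}
\]
holds \emph{exactly}, with no $(-1)^i$ correction (clear denominators and use $L_{i+1}-F_{i+1}=2F_i$ from Corollary~\ref{cor:2fm1} and $L_i+L_{i+2}=5F_{i+1}$ from Proposition~\ref{prop:15sum}), so your main anticipated obstacle does not arise.
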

	\begin{proof}
	We determine the resistance distance by performing $m-1$ $\Delta$--Y transformations.  After performing these transformations we are left with a circuit with a long series tail, and  two nodes connected in parallel (as shown in Figure~\ref{fig:2treefinalt}).  We first perform a parallel transformation on these two edges, specifically
	\[\begin{array}{c}
	\ds{
	\left(\frac{1}{1+{s_{m-1}}}+\frac{1}{1+{b_{m-1}}}\right)^{-1} = \left(\frac{1}{1+\frac{F_{m-1}^2}{F_{2m}}}+\frac{1}{1+\frac{F_m}{L_m}}\right)^{-1} 
	= \left(\frac{F_{2m}}{F_{2m}+{F_{m-1}^2}}+\frac{L_m}{L_m+{F_m}}\right)^{-1}  \hspace*{2cm}
	} \\[6mm]
	\ds{
	= \left(\frac{F_{2m}}{F_{m}F_{m+1}+F_mF_{m-1}+{F_{m-1}^2}}+\frac{L_m}{2F_{m+1}}\right)^{-1}  
	= \left(\frac{F_{2m}}{F_{m}F_{m+1}+F_{m-1}F_{m+1}}+\frac{L_m}{2F_{m+1}}\right)^{-1}  
	} \\[6mm]
	\ds{
	= \left(\frac{F_{2m}}{F_{m+1}^2}+\frac{L_m}{2F_{m+1}}\right)^{-1}  
	= \left(\frac{2F_{2m}+F_{m+1}L_m}{2F_{m+1}^2}\right)^{-1}  
	= \left(\frac{L_m(2F_m+F_{m+1})}{2F_{m+1}^2}\right)^{-1}  
	} \\[6mm]
	\ds{
	= \left(\frac{L_mL_{m+1}}{2F_{m+1}^2}\right)^{-1}  	
	=\frac{2F^2_{m+1}}{L_{m+1}L_m}
	}.\end{array}\]
 All that remains is to consider the series portion of the circuit which contains the ``tails'' of each transformation.  By Lemma~\ref{lem3} the tail after transformation $i$ is $\dfrac{F_iF_{i+1}}{L_iL_{i+1}}$.  Summing the resistances yields the desired result.
	For the proof of the second equality, see Propositions \ref{prop:rm1nA} and  \ref{prop:rm1nB}.
\end{proof}

	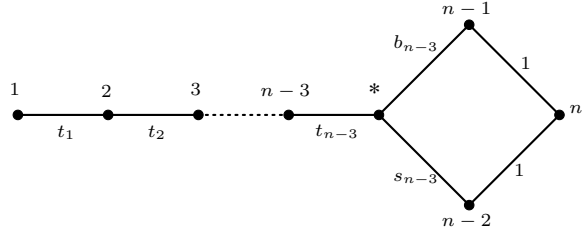
\begin{figure}
\begin{center}

\begin{tikzpicture}[line cap=round,line join=round,>=triangle 45,x=1.0cm,y=1.0cm, scale = 1.2]
\draw [line width=.8pt] (-3.,2.)-- (-2.,2.);
\draw [line width=.8pt] (-2.,2.)-- (-1.,2.);
\draw [line width=.8pt,dotted] (-1.,2.)-- (0.,2.);
\draw [line width=.8pt] (0.,2.)-- (1.,2.);
\draw [line width=.8pt] (1.,2.)-- (2.,3.);
\draw [line width=.8pt] (2.,3.)-- (3.,2.);
\draw [line width=.8pt] (3.,2.)-- (2.,1.);
\draw [line width=.8pt] (2.,1.)-- (1.,2.);
\begin{scriptsize}
\draw [fill=black] (-3.,2.) circle (1.5pt);
\draw[color=black] (-3.0357596415464108,2.283574198210263) node {$1$};
\draw [fill=black] (-2.,2.) circle (1.5pt);
\draw[color=black] (-2.0176831175804275,2.269627944457304) node {$2$};
\draw[color=black] (-2.463963237675105,1.8070253468803786) node {$t_1$};
\draw [fill=black] (-1.,2.) circle (1.5pt);
\draw[color=black] (-1.0274991011203625,2.283574198210263) node {$3$};
\draw[color=black] (-1.4598329674620811,1.8070253468803786) node {$t_2$};
\draw [fill=black] (0.,2.) circle (1.5pt);
\draw[color=black] (-0.037315084660296774,2.269627944457304) node {$n-3$};
%\draw[color=black] (-0.46964895100201554,1.9070253468803786) node {$h$};
\draw [fill=black] (1.,2.) circle (1.5pt);
\draw[color=black] (0.9528689317997687,2.297520451963222) node {$\ast$};
\draw[color=black] (0.5344813192110087,1.8070253468803786) node {$t_{n-3}$};
\draw [fill=black] (2.,3.) circle (1.5pt);
\draw[color=black] (1.9709454557657518,3.1737582146703295) node {$n-1$};
\draw[color=black] (1.4107190819181155,2.7856393333167757) node {$b_{n-3}$};
\draw [fill=black] (3.,2.) circle (1.5pt);
\draw[color=black] (3.1842695322731562,2.060434138162924) node {$n$};
\draw[color=black] (2.6264193821548094,2.5764455270223956) node {$1$};
\draw [fill=black] (2.,1.) circle (1.5pt);
\draw[color=black] (1.9709454557657518,0.8192175541496014) node {$n-2$};
\draw[color=black] (2.556688113390016,1.3910139580209073) node {$1$};
\draw[color=black] (1.409878131533223,1.286312145051499) node {$s_{n-3}$};
\end{scriptsize}
\end{tikzpicture}

%\begin{tikzpicture}[auto,node distance=1.5cm, thick,main node/.style={circle,draw, fill=red!10!}]
%\node[main node] (1) [] {n};
%\node[main node] (2) [above left of=1] {n-1};
%\node[main node] (3) [below left of=1] {n-2};
%\node[main node] (4) [above left of=3] {*};
%\node[main node] (5) [left of=4] {Y(n-4)};
%\node[main node] (9) [left of=5] {Y2};
%\node[main node] (10) [left of=9] {Y1};
%\node[main node] (11) [left of=10] {1};
%\foreach \s/\t in {2/1, 1/3} {\path[draw] (\s) edge node[above]{1} (\t);}  
%\foreach \s/\t in {2/4} {\path[draw,dashed] (\s) edge node[above]{$r_b$} (\t);}  
%\foreach \s/\t in {3/4} {\path[draw,dashed] (\s) edge node[right]{$r_s$} (\t);}  
%\foreach \s/\t in {4/5} {\path[draw,dashed,red] (\s) edge node[above]{$r_{t_{n-2}}$} (\t);}  
%\foreach \s/\t in {5/9} {\path[draw,dotted] (\s) edge (\t);}  
%\foreach \s/\t in {9/10} {\path[draw] (\s) edge node[above]{$r_{t_2}$}(\t);}  
%\foreach \s/\t in {10/11} {\path[draw] (\s) edge node[above]{$r_{t_1}$}(\t);}  
%
%
%\end{tikzpicture}\\
\end{center}
\caption{A linear 2-tree after the $m-1$st $\Delta$--Y transformation.}
\label{fig:2treefinalt}
\end{figure}
	The proof that this is the maximal resistance distance for the straight linear 2-tree with $n$ vertices will be given in Section~\ref{sec:mono}.  However we show one more interesting result.
	\begin{theorem}\label{cor:onefifth}
	Let $G$ be the straight linear 2-tree with $n$ vertices and $H$ be the straight linear 2-tree with $n+1$ vertices.%given by $H=(\{1, \ldots, n+1\}, E(G) \cup (n-1,n+1) \cup (n+1,n)).$  
	Then 
	\[\lim_{n\rightarrow \infty} \left[r_{H} (1, n+1) - r_G(1,n)\right] = \frac{1}{5}.\]
	\end{theorem}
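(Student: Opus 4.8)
The plan is to read the conclusion straight off the closed-form evaluation in Theorem~\ref{thm:effRes}, so that the substantive work is already behind us. Write $m=n-2$ for the number of cells of $G$; then $H$ has $m+1$ cells, and Theorem~\ref{thm:effRes} gives
\[
r_G(1,n)=\frac{m+1}{5}+\frac{4F_{m+1}}{5L_{m+1}},\qquad
r_H(1,n+1)=\frac{m+2}{5}+\frac{4F_{m+2}}{5L_{m+2}}.
\]
Subtracting, the linear terms contribute exactly $\tfrac15$, and what is left is
\[
r_H(1,n+1)-r_G(1,n)=\frac15+\frac45\!\left(\frac{F_{m+2}}{L_{m+2}}-\frac{F_{m+1}}{L_{m+1}}\right),
\]
so the whole question reduces to showing that the bracketed difference of ratios tends to $0$ as $m\to\infty$.

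For that I would invoke Corollary~\ref{cor:sqr}: since $L_k/F_k\to\sqrt5$ we have $F_k/L_k\to 1/\sqrt5$, and both $F_{m+1}/L_{m+1}$ and $F_{m+2}/L_{m+2}$ converge to this common value, so their difference vanishes in the limit, giving $\lim_{n\to\infty}[r_H(1,n+1)-r_G(1,n)]=\tfrac15$. If one wants an explicit rate (which also streamlines the divergence claim), combine $L_k=F_{k+1}+F_{k-1}$ from Corollary~\ref{cor:2fm1} with Catalan's identity (Proposition~\ref{prop:catalan}, $r=1$) to obtain $F_{m+1}L_{m+2}-F_{m+2}L_{m+1}=2(-1)^m$, whence
\[
\frac{F_{m+2}}{L_{m+2}}-\frac{F_{m+1}}{L_{m+1}}=\frac{2(-1)^{m+1}}{L_{m+1}L_{m+2}}\longrightarrow 0
\]
because $L_k\to\infty$.

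Finally, to conclude $r_m(1,n)\to\infty$ I would telescope: since the increments $r_{m+1}(1,n+1)-r_m(1,n)$ converge to $1/5>0$, they exceed, say, $1/10$ for all large $m$, so the partial sums of the increments — which telescope to $r_m(1,n)$ up to a constant — diverge to $+\infty$. There is essentially no obstacle here beyond bookkeeping; the real content lives in Theorem~\ref{thm:effRes} and the Fibonacci/Lucas asymptotics it rests on. (Alternatively one could work from the series form in Theorem~\ref{thm:effRes}, in which the difference becomes $\frac{2F_{m+2}^2}{L_{m+2}L_{m+1}}-\frac{2F_{m+1}^2}{L_{m+1}L_m}+\frac{F_mF_{m+1}}{L_mL_{m+1}}$ and the same asymptotic estimates apply, but the closed form is cleaner.)
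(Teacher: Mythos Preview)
Your proof is correct and follows essentially the same route as the paper: apply the closed form from Theorem~\ref{thm:effRes} at $m$ and $m+1$, subtract to isolate $\tfrac15$, and kill the remaining $\tfrac45\bigl(F_{m+2}/L_{m+2}-F_{m+1}/L_{m+1}\bigr)$ using Corollary~\ref{cor:sqr}. Your extra computation of the explicit rate $2(-1)^{m+1}/(L_{m+1}L_{m+2})$ and the telescoping argument for divergence are correct but go a bit beyond what the paper writes out.
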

	\begin{proof}
	Let $m = n-2$.  By Theorem~\ref{thm:effRes} 
	\[r_H(1,n+1) =  \frac{m+2}{5} + \frac{4F_{m+2}}{5L_{m+2}}
	%\frac{2F_{m+2}^2}{L_{m+2}L_{m+1}} + \sum_{i=1}^m\frac{F_iF_{i+1}}{L_iL_{i+1}}
	\text{ and }
	r_G(1,n) =  \frac{m+1}{5} + \frac{4F_{m+1}}{5L_{m+1}}.
	%\frac{2F_{m+1}^2}{L_{m+1}L_{m}} + \sum_{i=1}^{m-1}\frac{F_iF_{i+1}}{L_iL_{i+1}}.
	\]
	Subtracting yields
	\[r_H(1,n+1)-r_G(1,n)=
	\frac{1}{5} + \frac{4F_{m+2}}{5L_{m+2}} - \frac{4F_{m+1}}{5L_{m+1}}.
	% \frac{2F_{m+2}^2}{L_{m+2}L_{m+1}}-\frac{2F_{m+1}^2}{L_{m+1}L_{m}}+\frac{F_mF_{m+1}}{L_mL_{m+1}}.
	\]
	 Taking limits and applying Corollary~\ref{cor:sqr} gives
	 \[
	 \lim_{n\rightarrow\infty}\left[r_H(1,n+1)-r_G(1,n)\right] =
	  \frac{1}{5} + \frac{4}{5}\left(\lim_{n\rightarrow\infty}\frac{F_{m+2}}{L_{m+2}}-\lim_{n\rightarrow\infty}\frac{F_{m+1}}{L_{m+1}}\right)= \frac{1}{5} + \frac{4}{5}\left(\frac{1}{\sqrt{5}}-\frac{1}{\sqrt{5}}\right)=\frac{1}{5}.\]

	 %\lim_{n\rightarrow\infty} \frac{2F_{m+2}^2}{L_{m+2}L_{m+1}}-\lim_{n\rightarrow\infty} \frac{2F_{m+1}^2}{L_{m+1}L_{m}}+\lim_{n\rightarrow\infty} \frac{F_mF_{m+1}}{L_mL_{m+1}}\\
%	 &=\lim_{n\rightarrow\infty} \frac{2F_{n}^2}{L_{n}L_{n-1}}-\lim_{n\rightarrow\infty} \frac{2F_{n-1}^2}{L_{n-1}L_{n-2}}+\lim_{n\rightarrow\infty} \frac{F_{n-2}F_{n-1}}{L_{n-2}L_{n-1}}\\
%	 &=\lim_{n\rightarrow\infty} \frac{2F_{n}^2}{L_{n}L_{n-1}}-\lim_{n\rightarrow\infty} \frac{2F_{n-1}^2}{L_{n-1}L_{n-2}}+\left(\lim_{n\rightarrow\infty} \frac{F_{n-2}}{L_{n-2}}\right)\left(\lim_{n\rightarrow\infty}\frac{F_{n-1}}{L_{n-1}}\right)\\
%	 &=\lim_{n\rightarrow\infty} \frac{2F_{n}^2}{L_{n}L_{n-1}}-\lim_{n\rightarrow\infty} \frac{2F_{n-1}^2}{L_{n-1}L_{n-2}}+\frac{1}{5}\\
%	 &=\left(\lim_{n\rightarrow\infty} \frac{2F_{n}}{L_{n}}\right)\left(\lim_{n\rightarrow\infty}\frac{F_n}{L_{n-1}}\right)-\left(\lim_{n\rightarrow\infty} \frac{2F_{n-1}}{L_{n-1}}\right)\left(\lim_{n\rightarrow\infty} \frac{F_{n-1}}{L_{n-2}}\right)+\frac{1}{5}\\
%	 &=\frac{2}{\sqrt{5}}\left(\lim_{n\rightarrow\infty} \frac{F_{n}L_{n-2}-F_{n-1}L_{n-1}}{L_{n-1}L_{n-2}}\right)+\frac{1}{5}\\
%	 &=\frac{2}{\sqrt{5}}\left(\lim_{n\rightarrow\infty} \frac{-1^n}{L_{n-1}L_{n-2}}\right)+\frac{1}{5}\\
%	 &=\frac{1}{5}.
	 \end{proof}
	\section{ Resistance between arbitrary points on a straight linear 2-tree}
	We now extend the $\Delta$--Y transform method of the previous section to derive the resistance distance between any two nodes in the straight linear 2-tree.
	
	\begin{lemma}\label{lem3g} Let $G$ be the straight linear 2-tree with $m$ cells and resistance equal to one on all edges except the edge between nodes 1 and 2 for which the resistance is $F_{2p+1}/F_{2p+2}$ where $p$ is an integer greater than or equal to 0.  Following the algorithm detailed in Section~\ref{sec:note} after the $i$th (for $1 \leq i \leq m-1$) 
$\Delta$--Y transform 
\[
{s_{i,p}} = \dfrac{F_{i}F_{i+2p}}{F_{2i+2p+2}},\  
{b_{i,p}}=\dfrac{F_{i+1}F_{i+2p+1}}{F_{2i+2p+2}}, \text{ and }{t_{i,p}}=\dfrac{F_{i}F_{i+1}F_{i+2p}F_{i+2p+1}}{F_{2i+2p}F_{2i+2p+2}}.
\]
\end{lemma}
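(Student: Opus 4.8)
The plan is to prove the lemma by induction on $i$, following the argument for Lemma~\ref{lem3} almost verbatim; in fact Lemma~\ref{lem3} is the special case $p=0$ (there $F_{2p+1}/F_{2p+2}=F_1/F_2=1$, and the displayed formulae agree with those of Lemma~\ref{lem3} after rewriting $F_{2\ell}=F_\ell L_\ell$ via Proposition~\ref{prop:f2m}). The only genuinely new feature is carrying the shift parameter $p$ through the index bookkeeping.

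For the base case $i=1$, the leftmost triangle is $\{1,2,3\}$, so in the notation of Remark~\ref{rem26} we have $R_A^1=r(1,2)=F_{2p+1}/F_{2p+2}$, $R_B^1=r(1,3)=1$, and $R_C^1=r(2,3)=1$. Then
\[
R_A^1+R_B^1+R_C^1=\frac{F_{2p+1}+2F_{2p+2}}{F_{2p+2}}=\frac{F_{2p+4}}{F_{2p+2}},
\]
since $F_{2p+1}+F_{2p+2}=F_{2p+3}$ and $F_{2p+2}+F_{2p+3}=F_{2p+4}$. Feeding this denominator into Definition~\ref{def:dy} and using $F_1=F_2=1$ gives $b_{1,p}=R_1^1=F_{2p+2}/F_{2p+4}$, $s_{1,p}=R_2^1=F_{2p+1}/F_{2p+4}$, and $t_{1,p}=R_3^1=F_{2p+1}/F_{2p+4}$, which are exactly the claimed expressions at $i=1$.

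For the inductive step, assume the formulae hold after the $(i-1)$st transform, with $2\le i\le m-1$. Exactly as in Lemma~\ref{lem3}, the algorithm merges the $s$-edge with a weight-one edge of $G$ and then relabels the apex, so the $i$th $\Delta$--Y transform is applied to the triangle with $R_A=b_{i-1,p}$, $R_B=s_{i-1,p}+1$, and $R_C=1$. The crucial simplification is
\[
R_A+R_B+R_C=\frac{F_iF_{i+2p}+F_{i-1}F_{i+2p-1}+2F_{2i+2p}}{F_{2i+2p}}=\frac{F_{2i+2p+2}}{F_{2i+2p}},
\]
where Proposition~\ref{prop:splitsum} with indices $(i-1,\,i+2p)$ collapses $F_iF_{i+2p}+F_{i-1}F_{i+2p-1}=F_{2i+2p-1}$, after which $F_{2i+2p-1}+2F_{2i+2p}=F_{2i+2p+2}$. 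Substituting into Definition~\ref{def:dy}: $s_{i,p}=R_2=b_{i-1,p}\cdot F_{2i+2p}/F_{2i+2p+2}=F_iF_{i+2p}/F_{2i+2p+2}$ immediately; $b_{i,p}=R_1=(s_{i-1,p}+1)\cdot F_{2i+2p}/F_{2i+2p+2}$, which equals $F_{i+1}F_{i+2p+1}/F_{2i+2p+2}$ once Proposition~\ref{prop:splitdiff} with indices $(i,\,i+2p)$ is used in the form $F_{i-1}F_{i+2p-1}+F_{2i+2p}=F_{i+1}F_{i+2p+1}$; and finally $t_{i,p}=R_3=R_A R_1=b_{i-1,p}b_{i,p}=F_iF_{i+1}F_{i+2p}F_{i+2p+1}/(F_{2i+2p}F_{2i+2p+2})$. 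This closes the induction.

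The argument is conceptually routine given Lemma~\ref{lem3}; the only place care is needed is matching the indices so that the numerators telescope, and Propositions~\ref{prop:splitsum} and~\ref{prop:splitdiff} are precisely the identities that accomplish this. Checking the whole computation against the $p=0$ case (where it must reduce to Lemma~\ref{lem3}) is the most reliable guard against an index slip, and I expect that to be the main practical obstacle rather than anything conceptual.
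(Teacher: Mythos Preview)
Your proof is correct and follows essentially the same inductive approach as the paper's own proof: same base case, same inductive setup with $R_A=b_{i-1,p}$, $R_B=s_{i-1,p}+1$, $R_C=1$, and the same key simplification $R_A+R_B+R_C=F_{2i+2p+2}/F_{2i+2p}$. The only cosmetic difference is that for $b_{i,p}$ you invoke Proposition~\ref{prop:splitdiff} directly to get $F_{i-1}F_{i+2p-1}+F_{2i+2p}=F_{i+1}F_{i+2p+1}$, whereas the paper expands $F_{2i+2p}$ via Proposition~\ref{prop:splitsum} and regroups; both routes yield the same identity in one line.
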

\begin{proof}  We show this result inductively.  After the first $\Delta$--Y transform 
\[\begin{aligned}{s_{1,p}} &= F_{2p+1}/F_{2p+4} = (F_1F_{1+2p})/(F_{2+2p+2}),\\
{b_{1,p}} &= F_{2p+2}/F_{2p+4} = (F_{1+1}F_{1+2p+1})/F_{2+2p+2}, \text{ and}\\
{t_{1,p}} &= F_{2p+1}/F_{2p+4}=(F_{1}F_{2}F_{1+2p}F_{1+2p+1}/{F_{2+2p}F_{2+2p+2}}.\end{aligned}\]
Suppose after the $k-1$st $\Delta$--Y transformation ${s_{k-1,p}} = \dfrac{F_{k-1}F_{k-1+2p}}{F_{2k+2p}}$ and ${b_{k-1,p}}=\dfrac{F_{k}F_{k+2p}}{F_{2k+2p}}.$  
Then $R_A^k = {b_{k-1,p}}$ and $R_B^k = {s_{k-1,p}}+1$, and $R_C^k=1$ so
\[\begin{array}{c}
\ds{ R_A+R_B+R_C = {b_{k-1,p}}+ {s_{k-1,p}}+1+1
=\frac{F_{k}F_{k+2p}+F_{k-1}F_{k-1+2p}+2F_{2k+2p}}{F_{2k+2p}}}
\\[6mm]
\ds{
=\frac{F_{2k+2p-1}+2F_{2k+2p}}{F_{2k+2p}}
=\frac{F_{2k+2p+2}}{F_{2k+2p}}}.\end{array}\]
Also we have
    \[\begin{array}{c}
    \ds{R_1 = \frac{R_BR_C}{R_A+R_B+R_C}
     = \frac{{s_{k-1,p}}+1}{R_A+R_B+R_C}
     =\frac{1+\frac{F_{k-1}F_{k-1+2p}}{F_{2k+2p}}}{\frac{F_{2k+2p+2}}{F_{2k+2p}}} \hspace*{3cm}
     }
     \\[6mm]
     \ds{
     =\frac{F_{2k+2p}+F_{k-1}F_{k-1+2p}}{F_{2k+2p+2}}
     =\frac{F_{k+1}F_{k+2p}+F_kF_{k+2p-1}+F_{k-1}F_{k-1+2p}}{F_{2k+2p+2}}
     }
     \\[6mm]
     \ds{
     =\frac{F_{k+1}F_{k+2p}+F_{k+1}F_{k-1+2p}}{F_{2k+2p+2}}
     =\frac{F_{k+1}F_{k+2p+1}}{F_{2k+2p+2}}
     ={b_{k,p}}}.\end{array}\]
    We also have
    \[
        R_2 = \frac{R_AR_C}{R_A+R_B+R_C}
     =\frac{\frac{F_{k}F_{k+2p}}{F_{2k+2p}}}{\frac{F_{2k+2p+2}}{F_{2k+2p}}}
%    }\\[6mm]
%       \ds{
   =\frac{{F_{k}F_{k+2p}}}{{F_{2k+2p+2}}}
     ={s_{k,p}},
     \]
and
    \[
    R_3 = \frac{R_AR_B}{R_A+R_B+R_C}
    =R_1{b_{k-1,p}}
    =\dfrac{F_{k+1}F_{k+2p+1}F_{k}F_{k+2p}}{F_{2k+2p+2}F_{2k+2p}}
    ={t_{k,p}}.
    \]
    \end{proof}
\begin{lemma}\label{lem:splusb} With reference to Lemma~\ref{lem3} and Lemma~\ref{lem3g},%After $p$ $\Delta$--Y transformations, 
\[
{s_p}+{b_p} = \frac{F_{2p+1}}{F_{2p+2}}, \text{ and } 
{s_{i,p}}+{b_{i,p}} = \frac{F_{2i + 2p+1}}{F_{2i + 2p+2}}.
\]

\end{lemma}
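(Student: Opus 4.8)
The plan is to prove both identities as direct algebraic consequences of the closed forms for $s_p, b_p$ (from Lemma~\ref{lem3}) and $s_{i,p}, b_{i,p}$ (from Lemma~\ref{lem3g}), using only the standard Fibonacci/Lucas identities collected in Section~\ref{sec:note}. No induction should be needed here since the hard inductive work was already done in Lemmas~\ref{lem3} and~\ref{lem3g}; what remains is a pair of bookkeeping computations.

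For the first identity, I would substitute ${s_p} = F_p^2/F_{2p+2}$ and ${b_p} = F_{p+1}/L_{p+1}$ and put them over a common denominator. The natural move is to rewrite $b_p$ so its denominator matches: since $F_{2p+2} = F_{p+1}L_{p+1}$ by Proposition~\ref{prop:f2m}, we have ${b_p} = F_{p+1}^2/F_{2p+2}$. Then ${s_p} + {b_p} = (F_p^2 + F_{p+1}^2)/F_{2p+2}$, and Proposition~\ref{prop:fsumsq} gives $F_p^2 + F_{p+1}^2 = F_{2p+1}$, yielding exactly $F_{2p+1}/F_{2p+2}$.

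For the second identity, I would substitute ${s_{i,p}} = F_i F_{i+2p}/F_{2i+2p+2}$ and ${b_{i,p}} = F_{i+1}F_{i+2p+1}/F_{2i+2p+2}$, which already share a denominator, so ${s_{i,p}} + {b_{i,p}} = (F_i F_{i+2p} + F_{i+1}F_{i+2p+1})/F_{2i+2p+2}$. The numerator should collapse to $F_{2i+2p+1}$; this is Proposition~\ref{prop:splitdiff}, $F_{n+m} = F_{n+1}F_{m+1} - F_{n-1}F_{m-1}$, applied with $n+1 = i+1$ and $m+1 = i+2p+1$ (so $n = i$, $m = i+2p$, $n+m = 2i+2p$, $n-1 = i-1$, $m-1 = i+2p-1$), giving $F_{2i+2p} = F_{i+1}F_{i+2p+1} - F_{i-1}F_{i+2p-1}$ — not quite the shape we want. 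Instead I would use Proposition~\ref{prop:splitsum}, $F_{k+m} = F_{k+1}F_m + F_k F_{m-1}$, with $k = i$ and $m = i+2p+1$: this gives $F_{2i+2p+1} = F_{i+1}F_{i+2p+1} + F_i F_{i+2p}$, which is precisely the numerator. So the second identity follows immediately.

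The computations here are genuinely routine, so there is no real obstacle; the only place to be slightly careful is matching indices correctly when invoking the split-sum identity (choosing $k=i$, $m=i+2p+1$ rather than some permutation), and noting that the $p$ in Lemma~\ref{lem3g} is allowed to be $0$, in which case the second identity specializes to the first (with ${s_{i,0}} = {s_i}$, ${b_{i,0}} = {b_i}$), so one could alternatively present only the second computation and obtain the first as the $p=0$ case. I would, however, keep both displayed since Lemma~\ref{lem3} uses the $L_{p+1}$ form for $b_p$ and it is clarifying to show the one-line reconciliation via Proposition~\ref{prop:f2m}.
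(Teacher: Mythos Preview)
Your proof is correct and follows exactly the approach the paper intends: the paper's own proof simply says the equalities are ``straightforward'' from Lemmas~\ref{lem3} and~\ref{lem3g}, and you have supplied precisely those straightforward details via Propositions~\ref{prop:f2m}, \ref{prop:fsumsq}, and~\ref{prop:splitsum}. Your observation that the first identity is the $p=0$ case of the second is a nice bonus.
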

\begin{proof}  Using Lemmas~\ref{lem3} and~\ref{lem3g}, the proofs of both equalities are straightforward.
\end{proof}

	 \begin{lemma}\label{lem43} Given the straight linear 2-tree with $n$ vertices and $m = n-2$ cells, the resistance distance between node $j$ and node $j+k$ where $j \geq 1$ and $2 \leq k \leq n-j$ is
	 \begin{equation}\label{eq:emilyformulagen}
	 \begin{array}{l}
	\ds{ \sum_{i=1}^{k-1} \dfrac{F_{i}F_{i+1}F_{i+2j-2}F_{i+2j-1}}{F_{2i+2j-2}F_{2i+2j}} + 
	\left(\frac{1}{b_{k-1,j-1}} +\frac{1}{1+s_{k-1,j-1}+s_{m-j-k+1}+b_{m-j-k+1} }\right)^{-1}}\\[6mm]
	\ds{=\sum_{i=1}^{k-1} \dfrac{F_{i}F_{i+1}F_{i+2j-2}F_{i+2j-1}}{F_{2i+2j-2}F_{2i+2j}} + 
		\frac{(F_{2m-2j-2k+4}F_{k-1}F_{2j+k-3}+F_{2m-2j-2k+5}F_{2j+2k-2})F_{k}F_{2j+k-2}}
		{F_{2j+2k-2}F_{2m+2} }, 
	}
%	\ds{ \sum_{i=1}^{k-j-1} \dfrac{F_{i}F_{i+1}F_{i+2j-2}F_{i+2j-1}}{F_{2i+2j-2}F_{2i+2j}} +
%	 \left(\frac{1}{1+{s_{j+k-2,j-1}}} +\frac{1}{1+{b_{j+k-2,j-1}}+{b_{\ell}}+{s_{\ell}} }\right)^{-1}}\\[6mm]
%	 \ds{= \sum_{i=1}^{k-1} \dfrac{F_{i}F_{i+1}F_{i+2j-2}F_{i+2j-1}}{F_{2i+2j-2}F_{2i+2j}}+
%	 \frac{F_{k-j}F_{k+j-2}(F_{2k-2}F_{2m-2k+2j+3}+F_{2m-2k+2j+2}F_{k-j-1}F_{k+j-3})}
%		{F_{2k-2}F_{2m+2j}}},
	 \end{array}\end{equation}
	 where ${s_{i,p}}$ and ${b_{i,p}}$ are as given in Lemma~\ref{lem3g}.
	 \end{lemma}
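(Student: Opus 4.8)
The plan is to reduce $r_m(j,j+k)$ to a single application of Lemma~\ref{lem3g} with parameter $p=j-1$: I will first collapse the $j-1$ ``cells'' of $G_n$ lying to the left of vertex $j$ into one effective edge between $j$ and $j+1$, then run the $\Delta$--Y algorithm of Section~\ref{sec:note} on the resulting graph $k-1$ times, and finally collapse the cells lying to the right of vertex $j+k$ into one effective edge, leaving a single triangle whose parallel reduction produces the second summand in \eqref{eq:emilyformulagen}.

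\textbf{Collapsing the left side.} Assume $j\ge 2$ (if $j=1$ no collapse is needed). The triangles $\{1,2,3\},\dots,\{j-1,j,j+1\}$ meet the rest of $G_n$ only in the two vertices $j$ and $j+1$. I would perform the transformations of Section~\ref{sec:note} to the leftmost $j-2$ of these triangles, exactly as in the proof of Lemma~\ref{lem3}, which leaves a pendant tail attached at vertex $j-1$ together with the triangle on $\{j-1,j,j+1\}$ carrying $r(j-1,j)=b_{j-2}$, $r(j-1,j+1)=s_{j-2}+1$, $r(j,j+1)=1$, everything on $\{j+1,\dots,n\}$ still of resistance one. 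By the Cut Vertex theorem the pendant tail may be discarded; vertex $j-1$ then has degree two, so a series transformation followed by a parallel transformation (against the existing edge $\{j,j+1\}$) replaces the whole left side by a single edge $\{j,j+1\}$ of resistance, using $s_{j-2}+b_{j-2}=F_{2j-3}/F_{2j-2}$ from Lemma~\ref{lem:splusb},
\[
\frac{b_{j-2}+s_{j-2}+1}{b_{j-2}+s_{j-2}+2}=\frac{F_{2j-3}+F_{2j-2}}{F_{2j-3}+2F_{2j-2}}=\frac{F_{2j-1}}{F_{2j}}=\frac{F_{2(j-1)+1}}{F_{2(j-1)+2}}.
\]
Hence, relabelling $j,j+1,\dots,n$ as $1,2,\dots,n-j+1$, the graph $G_n$ is equivalent, for resistances between vertices in $\{j,\dots,n\}$, to the graph $\Gamma$ of Lemma~\ref{lem3g} with $p=j-1$ and $m-j+1$ cells, and $r_m(j,j+k)=r_\Gamma(1,k+1)$.

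\textbf{Running the algorithm and collapsing the right side.} Next I would apply the $\Delta$--Y algorithm to $\Gamma$, performing transformations $1,\dots,k-1$ (legitimate when $k\le n-j-1$, so that $k-1\le(m-j+1)-1$). By Lemma~\ref{lem3g} the accumulated tail resistances are $t_{1,j-1},\dots,t_{k-1,j-1}$, and afterwards $\Gamma$ consists of a pendant path $1{-}2{-}\dots{-}k$ of total resistance $\sum_{i=1}^{k-1}t_{i,j-1}$ attached at the cut vertex $k$, together with a remaining component in which $k$ is joined to $k+1$ by $b_{k-1,j-1}$, to $k+2$ by $s_{k-1,j-1}+1$, with $r(k+1,k+2)=1$ and $\{k+1,\dots,m-j+3\}$ a clean linear $2$-tree $\mathcal C$ of $m-j-k+1$ cells. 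Since vertex $k$ separates $1$ from $k+1$, $r_\Gamma(1,k+1)=\sum_{i=1}^{k-1}t_{i,j-1}+r'(k,k+1)$, where $r'$ is taken in the remaining component. Collapsing $\mathcal C$ from its far end by the same three steps (Section~\ref{sec:note} transformations, the Cut Vertex theorem, a series and a parallel transformation) and again using Lemma~\ref{lem:splusb} replaces $\mathcal C$ by a single edge $\{k+1,k+2\}$ of resistance $F_{2(m-j-k+1)+1}/F_{2(m-j-k+1)+2}=s_{m-j-k+1}+b_{m-j-k+1}$. What remains is the triangle on $\{k,k+1,k+2\}$, so $r'(k,k+1)$ is the parallel combination of $b_{k-1,j-1}$ with $(s_{k-1,j-1}+1)+(s_{m-j-k+1}+b_{m-j-k+1})$, which is exactly the second term of \eqref{eq:emilyformulagen}. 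The boundary case $k=n-j$, where $\mathcal C$ is empty and $\Gamma$ has only $k-1$ cells, is handled by applying the reduction of Theorem~\ref{thm:effRes} directly to $\Gamma$ and is also subsumed by the closed form below.

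\textbf{The closed form, and the main obstacle.} For the second equality I would substitute $t_{i,j-1}=\dfrac{F_iF_{i+1}F_{i+2j-2}F_{i+2j-1}}{F_{2i+2j-2}F_{2i+2j}}$, $b_{k-1,j-1}=\dfrac{F_kF_{k+2j-2}}{F_{2k+2j-2}}$, $s_{k-1,j-1}=\dfrac{F_{k-1}F_{k+2j-3}}{F_{2k+2j-2}}$ (Lemma~\ref{lem3g}) and $s_{m-j-k+1}+b_{m-j-k+1}=\dfrac{F_{2m-2j-2k+3}}{F_{2m-2j-2k+4}}$ (Lemma~\ref{lem:splusb}) into the formula just obtained; the sum is already in the stated shape, and for the parallel term, clearing the common denominator $F_{2k+2j-2}F_{2m-2j-2k+4}$ and applying Proposition~\ref{prop:splitsum} twice collapses $b_{k-1,j-1}+1+s_{k-1,j-1}+s_{m-j-k+1}+b_{m-j-k+1}$ to $F_{2m+2}/(F_{2k+2j-2}F_{2m-2j-2k+4})$, while Proposition~\ref{prop:splitdiff}, in the form $F_{k+1}F_{k+2j-1}-F_{k-1}F_{k+2j-3}=F_{2j+2k-2}$, turns the numerator into $(F_{2m-2j-2k+4}F_{k-1}F_{2j+k-3}+F_{2m-2j-2k+5}F_{2j+2k-2})F_kF_{2j+k-2}$, giving the displayed expression. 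I expect the conceptual crux to be the left-side collapse -- recognizing that the $j-1$ omitted cells contribute \emph{exactly} the modified edge weight $F_{2(j-1)+1}/F_{2(j-1)+2}$ that makes the remainder a Lemma~\ref{lem3g} graph -- together with keeping track of the two boundary reductions; the closed-form simplification is lengthy but entirely routine.
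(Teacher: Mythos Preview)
Your proposal is correct and follows essentially the same route as the paper. The paper performs $j-1$ $\Delta$--Y transforms on the left and $m-j-k+1$ on the right \emph{simultaneously}, removes both tails via the Cut Vertex theorem, does a single series reduction through the left $*$ (no parallel step, since the $(j-1)$th transform already deleted the edge $\{j,j+1\}$) to obtain $w(j,j+1)=F_{2j-1}/F_{2j}$, and only then runs the $k-1$ Lemma~\ref{lem3g} transforms. Your variant---doing $j-2$ transforms plus an extra series--parallel pair on the left, and postponing the right-hand collapse until after the middle $k-1$ transforms---reaches the identical configuration, so the final parallel reduction and the Fibonacci algebra are the same in both arguments.
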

	 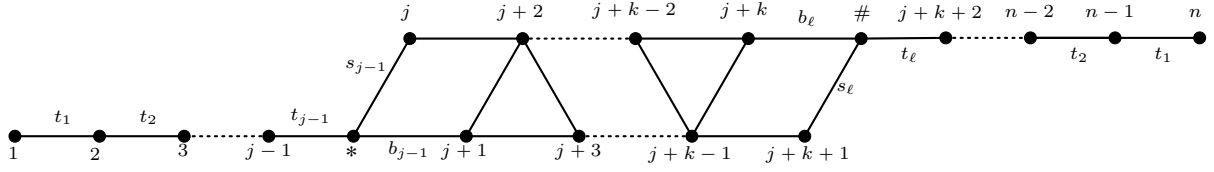
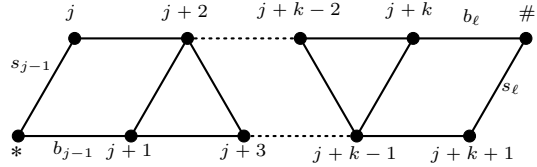
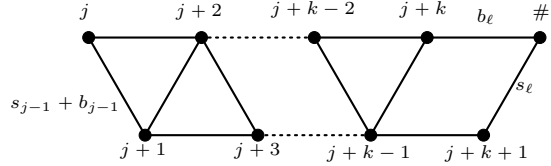
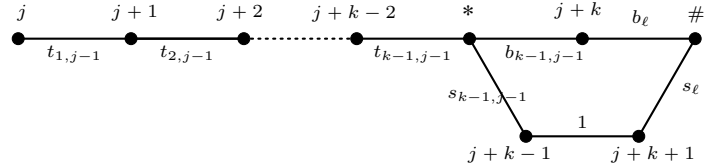
\begin{figure}

\begin{subfigure}[b]{\textwidth}
\begin{center}
\begin{tikzpicture}[line cap=round,line join=round,>=triangle 45,x=1.0cm,y=1.0cm,scale = 1.5]
\draw [line width=.8pt] (0.,3.)-- (-0.75,3.);
\draw [line width=.8pt,dotted] (-0.75,3.)-- (-1.5,3.);
\draw [line width=.8pt] (-1.5,3.)-- (-2.25,3.);
\draw [line width=.8pt] (-2.25,3.)-- (-3.,3.);
\draw [line width=.8pt] (4.5,3.8660254037844353)-- (5.25,3.87228);
\draw [line width=.8pt,dotted] (5.25,3.87228)-- (6.,3.87228);
\draw [line width=.8pt] (6.,3.87228)-- (6.75,3.87228);
\draw [line width=.8pt] (6.75,3.87228)-- (7.5,3.87228);
\draw [line width=.8pt] (6.,3.87228)-- (6.75,3.87228);
\draw [line width=.8pt] (0.,3.)-- (0.5,3.8660254037844393);
%\draw [line width=.8pt] (0.5,3.8660254037844393)-- (1.,3.);
\draw [line width=.8pt] (1.,3.)-- (1.5,3.866025403784439);
\draw [line width=.8pt] (1.5,3.866025403784439)-- (2.,3.);
%\draw [line width=.8pt,dotted] (2.,3.)-- (2.5,3.8660254037844384);
\draw [line width=.8pt] (2.5,3.8660254037844384)-- (3.,3.);
\draw [line width=.8pt] (3.,3.)-- (3.5,3.866025403784437);
%\draw [line width=.8pt] (3.5,3.866025403784437)-- (4.,3.);
\draw [line width=.8pt] (4.,3.)-- (4.5,3.8660254037844353);
\draw [line width=.8pt] (4.5,3.8660254037844353)-- (3.5,3.866025403784437);
\draw [line width=.8pt] (3.5,3.866025403784437)-- (2.5,3.8660254037844384);
\draw [line width=.8pt,dotted] (2.5,3.8660254037844384)-- (1.5,3.866025403784439);
\draw [line width=.8pt] (1.5,3.866025403784439)-- (0.5,3.8660254037844393);
\draw [line width=.8pt] (0.,3.)-- (1.,3.);
\draw [line width=.8pt] (1.,3.)-- (2.,3.);
\draw [line width=.8pt,dotted] (2.,3.)-- (3.,3.);
\draw [line width=.8pt] (3.,3.)-- (4.,3.);
\begin{scriptsize}
\draw [fill=black] (0.,3.) circle (1.5pt);
\draw[color=black] (-0.016037339671645646,2.864228751505624) node {$\ast$};
\draw [fill=black] (1.,3.) circle (1.5pt);
\draw[color=black] (0.9832460507492192,2.864228751505624) node {$j+1$};
\draw [fill=black] (0.5,3.8660254037844393) circle (1.5pt);
\draw[color=black] (0.46637533156601324,4.093232461563467) node {$j$};
\draw [fill=black] (1.5,3.866025403784439) circle (1.5pt);
\draw[color=black] (1.477144737968727,4.093232461563467) node {$j+2$};
\draw [fill=black] (2.,3.) circle (1.5pt);
\draw[color=black] (1.9940154571519328,2.864228751505624) node {$j+3$};
\draw [fill=black] (2.5,3.8660254037844384) circle (1.5pt);
\draw[color=black] (2.487914144371441,4.116204493527165) node {$j+k-2$};
\draw [fill=black] (3.,3.) circle (1.5pt);
\draw[color=black] (2.9818128315909487,2.841256719541926) node {$j+k-1$};
\draw [fill=black] (3.5,3.866025403784437) circle (1.5pt);
\draw[color=black] (3.4642255028286075,4.116204493527165) node {$j+k$};
\draw [fill=black] (4.,3.) circle (1.5pt);
\draw[color=black] (4.02704028593921,2.841256719541926) node {$j+k+1$};
\draw [fill=black] (4.5,3.8660254037844353) circle (1.5pt);
\draw[color=black] (4.509452957176869,4.090662541472712) node {$\#$};
\draw [fill=black] (-0.75,3.) circle (1.5pt);
\draw[color=black] (-0.751142362509983,2.864228751505624) node {$j-1$};
\draw[color=black] (-0.3778468430998898,3.169010286897566) node {$t_{j-1}$};
\draw [fill=black] (-1.5,3.) circle (1.5pt);
\draw[color=black] (-1.5092194173120184,2.875714767487473) node {$3$};
\draw [fill=black] (-2.25,3.) circle (1.5pt);
\draw[color=black] (-2.2902685040777517,2.841256719541926) node {$2$};
\draw [fill=black] (-3.,3.) circle (1.5pt);
\draw[color=black] (-3.0138875109342402,2.8527427355237753) node {$1$};
\draw[color=black] (-2.5716758956330525,3.16792638278866) node {$t_1$};
\draw[color=black] (-1.8135988408310175,3.16792638278866) node {$t_2$};
\draw [fill=black] (5.25,3.87228) circle (1.5pt);
\draw[color=black] (5.19861391608781,4.093232461563467) node {$j+k+2$};
\draw[color=black] (4.928692540514358,3.72118250700081666) node {$t_\ell $};
\draw [fill=black] (6.,3.87228) circle (1.5pt);
\draw[color=black] (5.991149018835392,4.104718477545316) node {$n-2$};
%\draw[color=black] (5.6867695953163935,3.8233110859900155) node {$s_1$};
\draw [fill=black] (6.75,3.87228) circle (1.5pt);
\draw[color=black] (6.703282009710032,4.104718477545316) node {$n-1$};
\draw [fill=black] (7.5,3.87228) circle (1.5pt);
\draw[color=black] (7.461359064512067,4.093232461563467) node {$n$};
\draw[color=black] (7.179951672956766,3.7233110859900155) node {$t_1$};
\draw[color=black] (6.43336063413658,3.7233110859900155) node {$t_2$};
\draw[color=black] (0.11605184411961811,3.6165627983167337) node {$s_{j-1}$};
%\draw[color=black] (0.632922563302824,3.4557585745708477) node {$d_1$};
%\draw[color=black] (1.115335234540483,3.7084509261715257) node {$e_1$};
%\draw[color=black] (1.7011220496147832,3.3868424786797537) node {$f_2$};
%\draw[color=black] (2.126104640943197,3.7199369421533746) node {$g_2$};
%\draw[color=black] (2.665947392090101,3.3523844307342068) node {$h_2$};
%\draw[color=black] (3.171332095291458,3.7314229581352234) node {$i_2$};
%\draw[color=black] (3.6996888304565125,3.375356462697905) node {$j_2$};
\draw[color=black] (4.365877757403756,3.4213005266253007) node {$s_\ell$};
\draw[color=black] (4.021297277948285,4.051947501518089) node {$b_\ell$};
%\draw[color=black] (2.9416117756544775,4.133433517499938) node {$m_2$};
%\draw[color=black] (2.0112444811247068,4.1449195334817865) node {$n_2$};
%\draw[color=black] (1.057905154631238,4.1793775814273335) node {$p_2$};
\draw[color=black] (0.4917162276839946,2.8733459033331897) node {$b_{j-1}$};
%\draw[color=black] (1.5518038418507456,2.950373871369492) node {$r_2$};
%\draw[color=black] (2.5625732482534596,2.950373871369492) node {$s_2$};
%\draw[color=black] (3.5618566386743247,2.950373871369492) node {$t_2$};
\end{scriptsize}
\end{tikzpicture}
\end{center}
\caption{The straight linear 2-tree $G$ after $j-1$ $\Delta$--Y transformations on the left and $m-j-k+1$ transformations on the right.}
\label{sf:step1}
\end{subfigure}
\vspace{.5cm}

\begin{subfigure}[b]{\textwidth}
\begin{center}
\begin{tikzpicture}[line cap=round,line join=round,>=triangle 45,x=1.0cm,y=1.0cm,scale = 1.5]
%\draw [line width=.8pt] (0.,3.)-- (-0.75,3.);
%\draw [line width=.8pt,dashed] (-0.75,3.)-- (-1.5,3.);
%\draw [line width=.8pt] (-1.5,3.)-- (-2.25,3.);
%\draw [line width=.8pt] (-2.25,3.)-- (-3.,3.);
%\draw [line width=.8pt] (4.5,3.8660254037844353)-- (5.25,3.87228);
%\draw [line width=.8pt,dashed] (5.25,3.87228)-- (6.,3.87228);
%\draw [line width=.8pt] (6.,3.87228)-- (6.75,3.87228);
%\draw [line width=.8pt] (6.75,3.87228)-- (7.5,3.87228);
%\draw [line width=.8pt] (6.,3.87228)-- (6.75,3.87228);
\draw [line width=.8pt] (0.,3.)-- (0.5,3.8660254037844393);
%\draw [line width=.8pt] (0.5,3.8660254037844393)-- (1.,3.);
\draw [line width=.8pt] (1.,3.)-- (1.5,3.866025403784439);
\draw [line width=.8pt] (1.5,3.866025403784439)-- (2.,3.);
%\draw [line width=.8pt,dotted] (2.,3.)-- (2.5,3.8660254037844384);
\draw [line width=.8pt] (2.5,3.8660254037844384)-- (3.,3.);
\draw [line width=.8pt] (3.,3.)-- (3.5,3.866025403784437);
%\draw [line width=.8pt] (3.5,3.866025403784437)-- (4.,3.);
\draw [line width=.8pt] (4.,3.)-- (4.5,3.8660254037844353);
\draw [line width=.8pt] (4.5,3.8660254037844353)-- (3.5,3.866025403784437);
\draw [line width=.8pt] (3.5,3.866025403784437)-- (2.5,3.8660254037844384);
\draw [line width=.8pt,dotted] (2.5,3.8660254037844384)-- (1.5,3.866025403784439);
\draw [line width=.8pt] (1.5,3.866025403784439)-- (0.5,3.8660254037844393);
\draw [line width=.8pt] (0.,3.)-- (1.,3.);
\draw [line width=.8pt] (1.,3.)-- (2.,3.);
\draw [line width=.8pt,dotted] (2.,3.)-- (3.,3.);
\draw [line width=.8pt] (3.,3.)-- (4.,3.);
\begin{scriptsize}
\draw [fill=black] (0.,3.) circle (1.5pt);
\draw[color=black] (-0.016037339671645646,2.864228751505624) node {$\ast$};
\draw [fill=black] (1.,3.) circle (1.5pt);
\draw[color=black] (0.9832460507492192,2.864228751505624) node {$j+1$};
\draw [fill=black] (0.5,3.8660254037844393) circle (1.5pt);
\draw[color=black] (0.46637533156601324,4.093232461563467) node {$j$};
\draw [fill=black] (1.5,3.866025403784439) circle (1.5pt);
\draw[color=black] (1.477144737968727,4.093232461563467) node {$j+2$};
\draw [fill=black] (2.,3.) circle (1.5pt);
\draw[color=black] (1.9940154571519328,2.864228751505624) node {$j+3$};
\draw [fill=black] (2.5,3.8660254037844384) circle (1.5pt);
\draw[color=black] (2.487914144371441,4.116204493527165) node {$j+k-2$};
\draw [fill=black] (3.,3.) circle (1.5pt);
\draw[color=black] (2.9818128315909487,2.841256719541926) node {$j+k-1$};
\draw [fill=black] (3.5,3.866025403784437) circle (1.5pt);
\draw[color=black] (3.4642255028286075,4.116204493527165) node {$j+k$};
\draw [fill=black] (4.,3.) circle (1.5pt);
\draw[color=black] (4.02704028593921,2.841256719541926) node {$j+k+1$};
\draw [fill=black] (4.5,3.8660254037844353) circle (1.5pt);
\draw[color=black] (4.509452957176869,4.090662541472712) node {$\#$};
%\draw [fill=black] (-0.75,3.) circle (1.5pt);
%\draw[color=black] (-0.751142362509983,2.864228751505624) node {$j-1$};
%\draw[color=black] (-0.3778468430998898,3.169010286897566) node {$t_{j-1}$};
%\draw [fill=black] (-1.5,3.) circle (1.5pt);
%\draw[color=black] (-1.5092194173120184,2.875714767487473) node {$3$};
%\draw [fill=black] (-2.25,3.) circle (1.5pt);
%\draw[color=black] (-2.2902685040777517,2.841256719541926) node {$2$};
%\draw [fill=black] (-3.,3.) circle (1.5pt);
%\draw[color=black] (-3.0138875109342402,2.8527427355237753) node {$1$};
%\draw[color=black] (-2.5716758956330525,3.16792638278866) node {$t_1$};
%\draw[color=black] (-1.8135988408310175,3.16792638278866) node {$t_2$};
%\draw [fill=black] (5.25,3.87228) circle (1.5pt);
%\draw[color=black] (5.19861391608781,4.093232461563467) node {$j+k+1$};
%\draw[color=black] (4.928692540514358,3.72118250700081666) node {$t_\ell $};
%\draw [fill=black] (6.,3.87228) circle (1.5pt);
%\draw[color=black] (5.991149018835392,4.104718477545316) node {$n-2$};
%%\draw[color=black] (5.6867695953163935,3.8233110859900155) node {$s_1$};
%\draw [fill=black] (6.75,3.87228) circle (1.5pt);
%\draw[color=black] (6.703282009710032,4.104718477545316) node {$n-1$};
%\draw [fill=black] (7.5,3.87228) circle (1.5pt);
%\draw[color=black] (7.461359064512067,4.093232461563467) node {$n$};
%\draw[color=black] (7.179951672956766,3.7233110859900155) node {$t_1$};
%\draw[color=black] (6.43336063413658,3.7233110859900155) node {$t_2$};
\draw[color=black] (0.11605184411961811,3.6165627983167337) node {$s_{j-1}$};
%\draw[color=black] (0.632922563302824,3.4557585745708477) node {$d_1$};
%\draw[color=black] (1.115335234540483,3.7084509261715257) node {$e_1$};
%\draw[color=black] (1.7011220496147832,3.3868424786797537) node {$f_2$};
%\draw[color=black] (2.126104640943197,3.7199369421533746) node {$g_2$};
%\draw[color=black] (2.665947392090101,3.3523844307342068) node {$h_2$};
%\draw[color=black] (3.171332095291458,3.7314229581352234) node {$i_2$};
%\draw[color=black] (3.6996888304565125,3.375356462697905) node {$j_2$};
\draw[color=black] (4.365877757403756,3.4213005266253007) node {$s_\ell$};
\draw[color=black] (4.021297277948285,4.051947501518089) node {$b_\ell$};
%\draw[color=black] (2.9416117756544775,4.133433517499938) node {$m_2$};
%\draw[color=black] (2.0112444811247068,4.1449195334817865) node {$n_2$};
%\draw[color=black] (1.057905154631238,4.1793775814273335) node {$p_2$};
\draw[color=black] (0.4917162276839946,2.8733459033331897) node {$b_{j-1}$};
%\draw[color=black] (1.5518038418507456,2.950373871369492) node {$r_2$};
%\draw[color=black] (2.5625732482534596,2.950373871369492) node {$s_2$};
%\draw[color=black] (3.5618566386743247,2.950373871369492) node {$t_2$};
\end{scriptsize}
\end{tikzpicture}
\end{center}
\caption{The graph $G$ after tails are removed. }
\label{sf:step2}
\end{subfigure}
\vspace{.5cm}

\begin{subfigure}[b]{\textwidth}
\begin{center}
\begin{tikzpicture}[line cap=round,line join=round,>=triangle 45,x=1.0cm,y=1.0cm,scale = 1.5]
%\draw [line width=.8pt] (0.,3.)-- (-0.75,3.);
%\draw [line width=.8pt,dashed] (-0.75,3.)-- (-1.5,3.);
%\draw [line width=.8pt] (-1.5,3.)-- (-2.25,3.);
%\draw [line width=.8pt] (-2.25,3.)-- (-3.,3.);
%\draw [line width=.8pt] (4.5,3.8660254037844353)-- (5.25,3.87228);
%\draw [line width=.8pt,dashed] (5.25,3.87228)-- (6.,3.87228);
%\draw [line width=.8pt] (6.,3.87228)-- (6.75,3.87228);
%\draw [line width=.8pt] (6.75,3.87228)-- (7.5,3.87228);
%\draw [line width=.8pt] (6.,3.87228)-- (6.75,3.87228);
%\draw [line width=.8pt] (0.,3.)-- (0.5,3.8660254037844393);
%\draw [line width=.8pt] (0.5,3.8660254037844393)-- (1.,3.);
\draw [line width=.8pt] (1.,3.)-- (1.5,3.866025403784439);
\draw [line width=.8pt] (1.5,3.866025403784439)-- (2.,3.);
%\draw [line width=.8pt,dotted] (2.,3.)-- (2.5,3.8660254037844384);
\draw [line width=.8pt] (2.5,3.8660254037844384)-- (3.,3.);
\draw [line width=.8pt] (3.,3.)-- (3.5,3.866025403784437);
%\draw [line width=.8pt] (3.5,3.866025403784437)-- (4.,3.);
\draw [line width=.8pt] (4.,3.)-- (4.5,3.8660254037844353);
\draw [line width=.8pt] (4.5,3.8660254037844353)-- (3.5,3.866025403784437);
\draw [line width=.8pt] (3.5,3.866025403784437)-- (2.5,3.8660254037844384);
\draw [line width=.8pt,dotted] (2.5,3.8660254037844384)-- (1.5,3.866025403784439);
\draw [line width=.8pt] (1.5,3.866025403784439)-- (0.5,3.8660254037844393);
\draw [line width=.8pt] (0.5,3.8660254037844393)-- (1.,3.);
\draw [line width=.8pt] (1.,3.)-- (2.,3.);
\draw [line width=.8pt,dotted] (2.,3.)-- (3.,3.);
\draw [line width=.8pt] (3.,3.)-- (4.,3.);
\begin{scriptsize}
%\draw [fill=black] (0.,3.) circle (1.5pt);
%\draw[color=black] (-0.016037339671645646,2.864228751505624) node {$\ast$};
\draw [fill=black] (1.,3.) circle (1.5pt);
\draw[color=black] (0.9832460507492192,2.864228751505624) node {$j+1$};
\draw [fill=black] (0.5,3.8660254037844393) circle (1.5pt);
\draw[color=black] (0.46637533156601324,4.093232461563467) node {$j$};
\draw [fill=black] (1.5,3.866025403784439) circle (1.5pt);
\draw[color=black] (1.477144737968727,4.093232461563467) node {$j+2$};
\draw [fill=black] (2.,3.) circle (1.5pt);
\draw[color=black] (1.9940154571519328,2.864228751505624) node {$j+3$};
\draw [fill=black] (2.5,3.8660254037844384) circle (1.5pt);
\draw[color=black] (2.487914144371441,4.116204493527165) node {$j+k-2$};
\draw [fill=black] (3.,3.) circle (1.5pt);
\draw[color=black] (2.9818128315909487,2.841256719541926) node {$j+k-1$};
\draw [fill=black] (3.5,3.866025403784437) circle (1.5pt);
\draw[color=black] (3.4642255028286075,4.116204493527165) node {$j+k$};
\draw [fill=black] (4.,3.) circle (1.5pt);
\draw[color=black] (4.02704028593921,2.841256719541926) node {$j+k+1$};
\draw [fill=black] (4.5,3.8660254037844353) circle (1.5pt);
\draw[color=black] (4.509452957176869,4.090662541472712) node {$\#$};
\draw[color=black] (4.365877757403756,3.4213005266253007) node {$s_\ell$};
\draw[color=black] (4.021297277948285,4.051947501518089) node {$b_\ell$};
%\draw[color=black] (2.9416117756544775,4.133433517499938) node {$m_2$};
%\draw[color=black] (2.0112444811247068,4.1449195334817865) node {$n_2$};
%\draw[color=black] (1.057905154631238,4.1793775814273335) node {$p_2$};
\draw[color=black] (0.2917162276839946,3.2733459033331897) node {$s_{j-1} + b_{j-1}$};
%\draw[color=black] (1.5518038418507456,2.950373871369492) node {$r_2$};
%\draw[color=black] (2.5625732482534596,2.950373871369492) node {$s_2$};
%\draw[color=black] (3.5618566386743247,2.950373871369492) node {$t_2$};
\end{scriptsize}
\end{tikzpicture}
\end{center}
\caption{The graph $G$ after a series transformation}
\label{sf:step3}
\end{subfigure}
\vspace{.5cm}

\begin{subfigure}[b]{\textwidth}
\begin{center}
\begin{tikzpicture}[line cap=round,line join=round,>=triangle 45,x=1.0cm,y=1.0cm,scale = 1.5]
%\draw [line width=.8pt] (0.,3.)-- (-0.75,3.);
%\draw [line width=.8pt,dashed] (-0.75,3.)-- (-1.5,3.);
%\draw [line width=.8pt] (-1.5,3.)-- (-2.25,3.);
%\draw [line width=.8pt] (-2.25,3.)-- (-3.,3.);
%\draw [line width=.8pt] (4.5,3.8660254037844353)-- (5.25,3.87228);
%\draw [line width=.8pt,dashed] (5.25,3.87228)-- (6.,3.87228);
%\draw [line width=.8pt] (6.,3.87228)-- (6.75,3.87228);
%\draw [line width=.8pt] (6.75,3.87228)-- (7.5,3.87228);
%\draw [line width=.8pt] (6.,3.87228)-- (6.75,3.87228);
%\draw [line width=.8pt] (0.,3.)-- (0.5,3.8660254037844393);
%\draw [line width=.8pt] (0.5,3.8660254037844393)-- (1.,3.);
%\draw [line width=.8pt] (1.,3.)-- (1.5,3.866025403784439);
%\draw [line width=.8pt] (1.5,3.866025403784439)-- (2.,3.);
%\draw [line width=.8pt,dashed] (2.,3.)-- (2.5,3.8660254037844384);
\draw [line width=.8pt] (2.5,3.8660254037844384)-- (3.,3.);
%\draw [line width=.8pt] (3.,3.)-- (3.5,3.866025403784437);
%\draw [line width=.8pt] (3.5,3.866025403784437)-- (4.,3.);
\draw [line width=.8pt] (4.,3.)-- (4.5,3.8660254037844353);
\draw [line width=.8pt] (4.5,3.8660254037844353)-- (3.5,3.866025403784437);
\draw [line width=.8pt] (3.5,3.866025403784437)-- (2.5,3.8660254037844384);
\draw [line width=.8pt] (2.5,3.8660254037844384)-- (1.5,3.866025403784439);
\draw [line width=.8pt,dotted] (1.5,3.866025403784439)-- (0.5,3.8660254037844393);
\draw [line width=.8pt] (-.5,3.866025403784439)-- (0.5,3.8660254037844393);
\draw [line width=.8pt] (-1.5,3.866025403784439)-- (0.5,3.8660254037844393);
%\draw [line width=.8pt] (0.5,3.8660254037844393)-- (1.,3.);
%\draw [line width=.8pt] (1.,3.)-- (2.,3.);
%\draw [line width=.8pt,dashed] (2.,3.)-- (3.,3.);
\draw [line width=.8pt] (3.,3.)-- (4.,3.);
\begin{scriptsize}
%\draw [fill=black] (0.,3.) circle (1.5pt);
%\draw[color=black] (-0.016037339671645646,2.864228751505624) node {$\ast$};
%\draw [fill=black] (1.,3.) circle (1.5pt);
%\draw[color=black] (0.9832460507492192,2.864228751505624) node {$j+1$};
\draw [fill=black] (-1.5,3.8660254037844393) circle (1.5pt);
\draw[color=black] (-1.46637533156601324,4.093232461563467) node {$j$};
\draw [fill=black] (-0.5,3.8660254037844393) circle (1.5pt);
\draw[color=black] (-0.46637533156601324,4.093232461563467) node {$j+1$};
\draw [fill=black] (0.5,3.8660254037844393) circle (1.5pt);
\draw[color=black] (0.46637533156601324,4.093232461563467) node {$j+2$};
\draw [fill=black] (1.5,3.866025403784439) circle (1.5pt);
\draw[color=black] (1.477144737968727,4.093232461563467) node {$j+k-2$};
%\draw [fill=black] (2.,3.) circle (1.5pt);
%\draw[color=black] (1.9940154571519328,2.864228751505624) node {$j+3$};
\draw [fill=black] (2.5,3.8660254037844384) circle (1.5pt);
\draw[color=black] (2.487914144371441,4.116204493527165) node {$\ast$};
\draw [fill=black] (3.,3.) circle (1.5pt);
\draw[color=black] (2.8818128315909487,2.841256719541926) node {$j+k-1\;$};
\draw [fill=black] (3.5,3.866025403784437) circle (1.5pt);
\draw[color=black] (3.4642255028286075,4.116204493527165) node {$j+k$};
\draw [fill=black] (4.,3.) circle (1.5pt);
\draw[color=black] (4.12704028593921,2.841256719541926) node {$j+k+1$};
\draw [fill=black] (4.5,3.8660254037844353) circle (1.5pt);
\draw[color=black] (4.509452957176869,4.080662541472712) node {$\#$};
%\draw [fill=black] (-0.75,3.) circle (1.5pt);
%\draw[color=black] (-0.751142362509983,2.864228751505624) node {$j-1$};
%\draw[color=black] (-0.3778468430998898,3.169010286897566) node {$t_{j-1}$};
%\draw [fill=black] (-1.5,3.) circle (1.5pt);
%\draw[color=black] (-1.5092194173120184,2.875714767487473) node {$3$};
%\draw [fill=black] (-2.25,3.) circle (1.5pt);
%\draw[color=black] (-2.2902685040777517,2.841256719541926) node {$2$};
%\draw [fill=black] (-3.,3.) circle (1.5pt);
%\draw[color=black] (-3.0138875109342402,2.8527427355237753) node {$1$};
\draw[color=black] (-1,3.72792638278866) node {$t_{1,j-1}$};
\draw[color=black] (0,3.72792638278866) node {$t_{2,j-1}$};
\draw[color=black] (2,3.72792638278866) node {$t_{k-1,j-1}$};

%\draw[color=black] (-1.8135988408310175,3.16792638278866) node {$t_2$};
%\draw [fill=black] (5.25,3.87228) circle (1.5pt);
%\draw[color=black] (5.19861391608781,4.093232461563467) node {$j+k+1$};
%\draw[color=black] (4.928692540514358,3.72118250700081666) node {$t_\ell $};
%\draw [fill=black] (6.,3.87228) circle (1.5pt);
%\draw[color=black] (5.991149018835392,4.104718477545316) node {$n-2$};
%%\draw[color=black] (5.6867695953163935,3.8233110859900155) node {$s_1$};
%\draw [fill=black] (6.75,3.87228) circle (1.5pt);
%\draw[color=black] (6.703282009710032,4.104718477545316) node {$n-1$};
%\draw [fill=black] (7.5,3.87228) circle (1.5pt);
%\draw[color=black] (7.461359064512067,4.093232461563467) node {$n$};
%\draw[color=black] (7.179951672956766,3.7233110859900155) node {$t_1$};
%\draw[color=black] (6.43336063413658,3.7233110859900155) node {$t_2$};
%\draw[color=black] (0.11605184411961811,3.6165627983167337) node {$s_{j-1}$};
%\draw[color=black] (0.632922563302824,3.4557585745708477) node {$d_1$};
%\draw[color=black] (1.115335234540483,3.7084509261715257) node {$e_1$};
%\draw[color=black] (1.7011220496147832,3.3868424786797537) node {$f_2$};
%\draw[color=black] (2.126104640943197,3.7199369421533746) node {$g_2$};
\draw[color=black] (2.665947392090101,3.3523844307342068) node {$s_{k-1,j-1}$};
\draw[color=black] (3.171332095291458,3.7314229581352234) node {$b_{k-1,j-1}$};
\draw[color=black] (3.5,3.125356462697905) node {$1$};
\draw[color=black] (4.465877757403756,3.4213005266253007) node {$s_\ell$};
\draw[color=black] (4.021297277948285,4.051947501518089) node {$b_\ell$};
%\draw[color=black] (2.9416117756544775,4.133433517499938) node {$m_2$};
%\draw[color=black] (2.0112444811247068,4.1449195334817865) node {$n_2$};
%\draw[color=black] (1.057905154631238,4.1793775814273335) node {$p_2$};
%\draw[color=black] (0.2917162276839946,3.2733459033331897) node {$s_{j-1} + b_{j-1}$};
%\draw[color=black] (1.5518038418507456,2.950373871369492) node {$r_2$};
%\draw[color=black] (2.5625732482534596,2.950373871369492) node {$s_2$};
%\draw[color=black] (3.5618566386743247,2.950373871369492) node {$t_2$};
\end{scriptsize}
\end{tikzpicture}
\end{center}
\caption{The graph $G$ after $k-1$ $\Delta$--Y transformations on the left. }

\label{sf:step4}
\end{subfigure}

%\begin{tikzpicture}[auto,node distance=1.5cm, thick,main node/.style={circle,draw, fill=red!10!}]
%\node[main node] (1) [] {*};
%\node[main node] (2) [left of=1] {k};
%\node[main node] (4) [below left of=2] {*};
%\node[main node] (3) [below right of=4] {j};
%\node[main node] (5) [left of=4] {j-4};
%\node[main node] (9) [left of=5] {3};
%\node[main node] (10) [left of=9] {2};
%\node[main node] (11) [left of=10] {1};
%\node[main node] (12) [right of=1] {$\ell-1$};
%\node[main node] (13) [right of=12] {n-1};
%\node[main node] (14) [right of=13] {n};
%\node[main node] (15) [right of=3] {j+2};
%\foreach \s/\t in {2/1} {\path[draw] (\s) edge node[above]{$r_{b_{\ell}}$} (\t);}  
%\foreach \s/\t in { 1/15} {\path[draw] (\s) edge node[right]{$r_{s_{\ell}}$} (\t);}  
%\foreach \s/\t in { 15/3} {\path[draw] (\s) edge node[above]{$1$} (\t);}  
%\foreach \s/\t in {2/4} {\path[draw] (\s) edge node[above]{$r_{b_{j-1}}$} (\t);}  
%\foreach \s/\t in {3/4} {\path[draw] (\s) edge node[left]{$r_{s_{j-1}}$} (\t);}  
%\foreach \s/\t in {4/5} {\path[draw] (\s) edge node[above]{$r_{t_{j-1}}$} (\t);}  
%\foreach \s/\t in {5/9} {\path[draw,dotted] (\s) edge (\t);}  
%\foreach \s/\t in {12/13} {\path[draw,dotted] (\s) edge (\t);}  
%\foreach \s/\t in {9/10} {\path[draw] (\s) edge node[above]{$r_{t_2}$}(\t);}  
%\foreach \s/\t in {1/12} {\path[draw] (\s) edge node[above]{$r_{t_{\ell}}$}(\t);}  
%\foreach \s/\t in {10/11} {\path[draw] (\s) edge node[above]{$r_{t_1}$}(\t);}  
%\foreach \s/\t in {13/14} {\path[draw] (\s) edge node[above]{$r_{t_1}$}(\t);}  
%
%
%	 \end{tikzpicture}\\
\caption{The process used to find resistance distance for an arbitrary pair of nodes in the straight linear 2-tree $G$. Since $\Delta$--Y transformations are performed on the right and left hand sides of $G$, we use $\ast$ to denote the central node in the $Y$ on the left of the graph, and $\#$ for the central $Y$ node on the right.}
\label{fig:bothsides}
\end{figure}

	 \begin{proof}	 
	 First we apply $p=j-1\,$ $\Delta$--Y transformations, on the left-most triangles, and $\ell = m-j-k+1$ transformations on the right-most triangles, as seen in Figure~\ref{fig:bothsides}(a).    By the cut vertex theorem, we can ignore the tails on both sides of the graph, as in Figure~\ref{fig:bothsides}(b).  When we use a series transformation on the leftmost two remaining edges, we obtain the graph in Figure~\ref{fig:bothsides}(c), which by Lemma \ref{lem:splusb} has edge weight
	 \[
	 w(j,j+1) = \frac{F_{2j-1}}{F_{2j}}.
	 \]
Finally, we perform $k-1\,$ $\Delta$--Y transformations to obtain the graph in Figure~\ref{fig:bothsides}(d). Lemma \ref{lem3g}	gives the first equality in the theorem.  For the second equality, note that 
	 %% TODO: insert graphic
\[\begin{array}{c}
	\ds{
	\left(\frac{1}{b_{k-1,j-1}} +\frac{1}{1+s_{k-1,j-1}+s_{m-j-k+1}+b_{m-j-k+1} }\right)^{-1}
	}\\[6mm]
	\ds{
	=\left(\frac{{F_{2j+2k-2}}}{{F_{k}F_{2j+k-2}}} +\frac{1}{1+\frac{F_{k-1}F_{2j+k-3}}{F_{2j+2k-2}}+\frac{F_{2m-2j-2k+3}}{F_{2m-2j-2k+4}} }\right)^{-1}
	}\\[6mm]
	\ds{
		=\left(\frac{{F_{2j+2k-2}}}{{F_{k}F_{2j+k-2}}} +\frac{{F_{2j+2k-2}F_{2m-2j-2k+4}} }{{F_{2m-2j-2k+4}F_{2j+2k-2} + F_{2m-2j-2k+4}F_{k-1}F_{2j+k-3}+F_{2m-2j-2k+3}F_{2j+2k-2}}}\right)^{-1}
	}\\[6mm]
	\ds{
		=\left(\frac{{F_{2j+2k-2}}}{{F_{k}F_{2j+k-2}}} +\frac{{F_{2j+2k-2}F_{2m-2j-2k+4}} }{{ F_{2m-2j-2k+4}F_{k-1}F_{2j+k-3}+F_{2m-2j-2k+5}F_{2j+2k-2}}}\right)^{-1}
	}\\[6mm]
	\ds{
		=\frac{(F_{2m-2j-2k+4}F_{k-1}F_{2j+k-3}+F_{2m-2j-2k+5}F_{2j+2k-2})F_{k}F_{2j+k-2}}{F_{2j+2k-2}(F_{2m-2j-2k+4}F_{k-1}F_{2j+k-3}+F_{2m-2j-2k+5}F_{2j+2k-2}) + F_{2j+2k-2}F_{2m-2j-2k+4}F_{k}F_{2j+k-2}} 
	}\\[6mm]
	\ds{
		=\frac{(F_{2m-2j-2k+4}F_{k-1}F_{2j+k-3}+F_{2m-2j-2k+5}F_{2j+2k-2})F_{k}F_{2j+k-2}}
		{F_{2j+2k-2}(F_{2m-2j-2k+4}F_{k-1}F_{2j+k-3}+F_{2m-2j-2k+5}F_{2j+2k-2}+ F_{2m-2j-2k+4}F_{k}F_{2j+k-2}) } 
	}\\[6mm]
	\ds{
		=\frac{(F_{2m-2j-2k+4}F_{k-1}F_{2j+k-3}+F_{2m-2j-2k+5}F_{2j+2k-2})F_{k}F_{2j+k-2}}
		{F_{2j+2k-2}(F_{2m-2j-2k+4}F_{2j+2k-3}+F_{2m-2j-2k+5}F_{2j+2k-2}) } 
	}\\[6mm]
	\ds{
		\frac{(F_{2m-2j-2k+4}F_{k-1}F_{2j+k-3}+F_{2m-2j-2k+5}F_{2j+2k-2})F_{k}F_{2j+k-2}}
		{F_{2j+2k-2}F_{2m+2} } 
	}\\[6mm]
%	\ds{
%	  	=\ds{  \left(\frac{F_{2k-4}}{F_{k-j}F_{k+j-2}} +(1+ \frac{F_{k-j-1}F_{k-j-1+2j-2}}{F_{2k-2j-2+2j-2}}+
%		  \frac{F_{m-k+1}F_{m-k+1+2j-2}}{F_{2m-2k+4+2j-1}} +  
%		   \frac{F_{m-k+2}F_{m-k+2+2j-2}}{F_{2m-2k+4+2j-2}})^{-1}\right)^{-1}}\\[2mm]
%%%%%%%=======%%%%
%	  	= \ds{ \left(\frac{{F_{2k-2}}}{F_{k-j}F_{k+j-2}} +\left(1+ \frac{F_{k-j-1}F_{k+j-3}}{F_{2k-2}}+
%		  \frac{F_{2m-2k+2+1}}{F_{2m-2k+2j+2}}\right)^{-1}\right)^{-1}}}\\[4mm]
%	  	= \ds{ \left(\frac{{F_{2k-2}}}{F_{k-j}F_{k+j-2}} + 
%		\frac{F_{2m-2k+2j+2}F_{2k-2}}{F_{2k-2}F_{2m-2k+2j+2}+F_{2m-2k+2j+2}F_{k-j-1}F_{k+j-3}+F_{2k-2}F_{2m-2k+2+1}}\right)^{-1}}\\[4mm]
%	  	= \ds{ \left(\frac{{F_{2k-2}}}{F_{k-j}F_{k+j-2}} + 
%		\frac{F_{2m-2k+2j+2}F_{2k-2}}
%		{F_{2k-2}F_{2m-2k+2j+3}+F_{2m-2k+2j+2}F_{k-j-1}F_{k+j-3}}\right)^{-1}}\\[4mm]
%	  	= \ds{ \frac{F_{k-j}F_{k+j-2}(F_{2k-2}F_{2m-2k+2j+3}+F_{2m-2k+2j+2}F_{k-j-1}F_{k+j-3})}
%		{F_{2k-2}(F_{2k-2}F_{2m-2k+2j+3}+F_{2m-2k+2j+2}F_{k-j-1}F_{k+j-3})+F_{2m-2k+2j+2}F_{2k-2}F_{k-j}F_{k+j-2}}
%		}\\[4mm]
%	  	= \ds{ \frac{F_{k-j}F_{k+j-2}(F_{2k-2}F_{2m-2k+2j+3}+F_{2m-2k+2j+2}F_{k-j-1}F_{k+j-3})}
%		{F_{2k-2}(F_{2k-2}F_{2m-2k+2j+3}+F_{2m-2k+2j+2}F_{k-j-1}F_{k+j-3}+F_{2m-2k+2j+2}F_{k-j}F_{k+j-2})}
%		}\\[4mm]
%	  	= \ds{ \frac{F_{k-j}F_{k+j-2}(F_{2k-2}F_{2m-2k+2j+3}+F_{2m-2k+2j+2}F_{k-j-1}F_{k+j-3})}
%		{F_{2k-2}(F_{2k-2}F_{2m-2k+2j+3}+F_{2m-2k+2j+2}F_{2k-3})}
%		}\\[4mm]
%	  	= \ds{ \frac{F_{k-j}F_{k+j-2}(F_{2k-2}F_{2m-2k+2j+3}+F_{2m-2k+2j+2}F_{k-j-1}F_{k+j-3})}
%		{F_{2k-2}F_{2m+2j}}
%		}\\[4mm]
	  \end{array}
\]
	 
	 \end{proof}

We are now prepared to prove our main result.
\begin{theorem}\label{thm:main}
 Given a straight linear 2-tree with $n$ vertices and $m = n-2$ cells, the resistance distance between node $j<n$ and node $j+k \leq n$, $k \geq 1$, is
	 \begin{multline}\label{eq:wayneformulagen}r_m(j,j+k)=\frac{\sum_{i=1}^{k}\left[F_iF_{i+2j-2} - F_{i-1}F_{i+2j-3}\right]F_{2m-2i-2j+5}}{F_{2m+2}}\\
	= \frac{F_{m+1}^2+F_k^2F_{m-2j-k+3}^2+\frac{F_{m+1}}{5}\left[{F_{m-k}}(kL_k-F_k)+{F_{m-k+1}}\left((k-5)F_{k+1}+(2k+2)F_{k}\right)\right]}{F_{2m+2}}.\end{multline}

%	 \begin{equation}\label{eq:wayneformulagen}r_m(j,k)=\frac{\sum_{i=1}^{k-j}\left[F_{i+j-3}F_{i+j}+2F_{j-1}^2(-1)^{i-1}\right]F_{2m-2i+5-2j}}{F_{2m+2}}.\end{equation}
	 \end{theorem}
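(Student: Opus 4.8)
The overall plan is to derive the first equality of \eqref{eq:wayneformulagen} from Lemma~\ref{lem43} and then to prove the second equality as a self-contained Fibonacci--Lucas identity for $S:=\sum_{i=1}^{k}c_iF_{2m-2i-2j+5}$, where $c_i:=F_iF_{i+2j-2}-F_{i-1}F_{i+2j-3}$. Both halves also need the boundary value $k=1$ (adjacent vertices), which is a short separate check: running the $\Delta$--Y reduction with $j-1$ transformations on the left and $m-j$ on the right and discarding the tails leaves two parallel edges whose combined resistance is $\bigl(\tfrac{F_{2j}}{F_{2j-1}}+\tfrac{F_{2m-2j+2}}{F_{2m-2j+3}}\bigr)^{-1}=\tfrac{F_{2j-1}F_{2m-2j+3}}{F_{2m+2}}$ by Lemma~\ref{lem:splusb} and Proposition~\ref{prop:splitsum}; this is the $k=1$ case of the first formula, and its agreement with the $k=1$ case of the closed form is exactly Catalan's identity (Proposition~\ref{prop:catalan}).

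\textbf{First equality ($k\ge2$).} Starting from Lemma~\ref{lem43}, I would establish $\sum_{i=1}^{k-1}t_{i,j-1}+(\text{final parallel term})=\tfrac1{F_{2m+2}}\sum_{i=1}^{k}c_iF_{2m-2i-2j+5}$, with $t_{i,j-1}=\tfrac{F_iF_{i+1}F_{i+2j-2}F_{i+2j-1}}{F_{2i+2j-2}F_{2i+2j}}$. Because each $t_{i,j-1}$ is independent of $m$ whereas its intended counterpart is not, the argument must manufacture a telescoping correction: using Catalan's identity with $F_{2\ell}=F_\ell L_\ell$ (Proposition~\ref{prop:f2m}) to rewrite $F_iF_{i+2j-2}=F_{i+j-1}^2-(-1)^iF_{j-1}^2$ and $F_{i+1}F_{i+2j-1}=F_{i+j}^2+(-1)^iF_{j-1}^2$, I would exhibit a term $u_i$ (a ratio with denominator $F_{2i+2j-2}$ carrying an $m$-dependent factor over $F_{2m+2}$) for which $t_{i,j-1}=\tfrac{c_iF_{2m-2i-2j+5}}{F_{2m+2}}+(u_i-u_{i+1})$; then $\sum_{i=1}^{k-1}$ collapses the correction to $u_1-u_k$, and a direct evaluation of the final parallel resistance via Lemmas~\ref{lem3g} and~\ref{lem:splusb} shows it equals $\tfrac{c_kF_{2m-2k-2j+5}}{F_{2m+2}}+u_k-u_1$. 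An equivalent route is induction on $k$: the right side gains exactly $c_{k+1}F_{2m-2k-2j+3}/F_{2m+2}$ under $k\mapsto k+1$, so it suffices to check that the left side's increment $t_{k,j-1}+(\text{new}-\text{old parallel term})$ matches --- one Fibonacci identity plus the $k=1$ base.

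\textbf{Second equality.} A short computation (two applications of Proposition~\ref{prop:splitsum} give $c_i=F_i^2F_{2j-1}-F_{i-1}^2F_{2j-3}$, or use Binet directly) yields $c_i=\tfrac15\bigl(L_{2i+2j-3}+2(-1)^{i+1}L_{2j-2}\bigr)$, and then $L_aF_b=F_{a+b}+(-1)^aF_{b-a}$ gives $c_iF_{2m-2i-2j+5}=\tfrac15\bigl(F_{2m+2}-F_{2m-4i-4j+8}+2(-1)^{i+1}(F_{2m-2i+3}+F_{2m-2i-4j+7})\bigr)$. Summing over $i$ then needs only the telescoped identities $\sum_{i=1}^{k}F_{b-4i}=\tfrac15(L_{b-2}-L_{b-4k-2})$ and $\sum_{i=1}^{k}(-1)^{i+1}F_{a-2i}=\tfrac15(L_{a-1}-(-1)^kL_{a-2k-1})$, both from Proposition~\ref{prop:15sum}, and produces $25\,S$ as an explicit combination of $F_{2m+2}$ and the Lucas numbers $L_{2m+2}$, $L_{2m-4j+6}$, $L_{2m-4k-4j+6}$, $L_{2m-2k+2}$, $L_{2m-2k-4j+6}$. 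Replacing each $L_{2n}$ by $5F_n^2+2(-1)^n$, using $F_{2n}=F_nL_n$ (Proposition~\ref{prop:f2m}), the rule $L_{2N+2k}+L_{2N-2k}=L_{2N}L_{2k}$, and Catalan's identity twice more (once to turn $F_{m-2j+3}^2+F_{m-2k-2j+3}^2$ into the $F_k^2F_{m-2j-k+3}^2$ term, once to turn $(-1)^{m+1}F_k^2-F_{m-k+1}^2$ into $-F_{m+1}F_{m-2k+1}$), the whole reduces to the residual identity $kL_{m+1}-3F_{m+1}-2(-1)^kF_{m-2k+1}=F_{m-k}(kL_k-F_k)+F_{m-k+1}\bigl((k-5)F_{k+1}+(2k+2)F_k\bigr)$, which is verified directly using Propositions~\ref{prop:splitsum} and~\ref{prop:lm1} together with $F_aL_b=F_{a+b}+(-1)^bF_{a-b}$; dividing $S$ by $F_{2m+2}$ then gives the closed form.

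\textbf{Expected obstacle.} The delicate point is the first equality: the tail denominators $F_{2i+2j-2}F_{2i+2j}$ do not divide $F_{2m+2}$, so the collapse to one fraction hinges on an exact cancellation between the telescoped tail sum and the final parallel resistance, and finding the right correction $u_i$ (equivalently, the right one-step increment identity) is the step demanding genuine insight rather than bookkeeping. The second equality, though long, is entirely mechanical once $c_i$ is in Binet form and the two summation identities are available --- it is a chain of substitutions culminating in the explicitly checkable residual identity.
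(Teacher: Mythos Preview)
Your plan for the first equality is essentially the paper's: the base case $k=1$ is handled exactly as you describe, and the paper then inducts on $k$ by computing $r_m(j,j+k+1)-r_m(j,j+k)$ from Lemma~\ref{lem43} and simplifying it to $(F_{k+1}F_{2j+k-1}-F_kF_{2j+k-2})F_{2m-2k-2j+3}/F_{2m+2}$, which is precisely the new summand $c_{k+1}F_{2m-2k-2j+3}/F_{2m+2}$. Your telescoping variant with a correction term $u_i$ is left schematic and is not needed once you commit to the increment approach; the paper's increment computation is more than ``one Fibonacci identity'' (it uses Proposition~\ref{prop:splitdiff} and a page of regrouping), but the strategy is the same as yours.

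For the second equality the routes genuinely diverge. The paper again inducts on $k$: setting $C_k$ equal to the closed-form numerator, it checks $C_1$ via Catalan and then shows $C_{k+1}-C_k$ equals the increment just found, reducing everything to two appendix identities (Propositions~\ref{app:simpA} and~\ref{app:simpB}) that factor the expressions $F_{k+1}F_{m-2j-k+2}^2+F_{m+1}F_{m-k+1}$ and $F_kF_{m-2j-k+3}^2+F_{m+1}F_{m-k}$. Your approach instead evaluates the sum $S$ directly by writing $c_i=\tfrac15\bigl(L_{2i+2j-3}+2(-1)^{i+1}L_{2j-2}\bigr)$ and summing via $L_aF_b=F_{a+b}+(-1)^aF_{b-a}$ and the telescoped Lucas sums. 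Both are valid. Your route is more systematic and makes the $j$-independence of everything except the $F_k^2F_{m-2j-k+3}^2$ term visible from the outset; the paper's route is shorter, avoids Binet-type manipulations entirely, and isolates the hard algebra in two self-contained propositions, at the cost of those propositions looking unmotivated.
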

	 \begin{proof}
We first show that 
%	 \begin{equation}\label{eq:wayneformulagensum}r_m(j,j+k)=\frac{\sum_{i=1}^{k}\left[F_{i+j-3}F_{i+j}+2F_{j-1}^2(-1)^{i-1}\right]F_{2m-2i-2j+5}}{F_{2m+2}}\end{equation}.
	 \begin{equation}
	 \label{eq:wayneformulagensum}r_m(j,j+k)=\frac{\sum_{i=1}^{k}\left[F_iF_{i+2j-2} - F_{i-1}F_{i+2j-3}\right]F_{2m-2i-2j+5}}{F_{2m+2}}.
	 \end{equation}

Fixing $n$ and $1\leq j < n$, we induct on $k$.  
	 
	\begin{figure}[h!]
\begin{center}
\begin{tikzpicture}[line cap=round,line join=round,>=triangle 45,x=1.0cm,y=1.0cm]
\draw [line width=.8pt] (-3.,2.)-- (-2.,2.);
\draw [line width=.8pt] (-2.,2.)-- (-1.,2.);
\draw [line width=.8pt,dotted] (-1.,2.)-- (0.,2.);
\draw [line width=.8pt] (0.,2.)-- (1.,2.);
\draw [line width=.8pt] (1.,2.)-- (2.,3.);
\draw [line width=.8pt] (1.,2.)-- (2.,1.);
\draw [line width=.8pt] (2.,1.)-- (3.5,1.);
\draw [line width=.8pt] (2.,3.)-- (3.5,3.);
\draw [line width=.8pt] (3.5,3.)-- (4.5,3.);
\draw [line width=.8pt,dotted] (4.5,3.)-- (5.5,3.);
\draw [line width=.8pt] (5.5,3.)-- (6.52,3.);
\draw [line width=.8pt] (3.5,1.)-- (3.5,3.);
\draw [line width=.8pt] (5.5,3.)-- (6.52,3.);
\begin{scriptsize}
\draw [fill=black] (-3.,2.) circle (1.5pt);
\draw[color=black] (-3.02,2.33) node {$1$};
\draw [fill=black] (-2.,2.) circle (1.5pt);
\draw[color=black] (-2.04,2.37) node {$2$};
\draw [fill=black] (-1.,2.) circle (1.5pt);
\draw[color=black] (-1.06,2.39) node {$3$};
\draw [fill=black] (0.,2.) circle (1.5pt);
\draw[color=black] (-0.06,2.37) node {$j-1$};
\draw [fill=black] (1.,2.) circle (1.5pt);
\draw[color=black] (0.92,2.35) node {$\ast$};
\draw [fill=black] (2.,3.) circle (1.5pt);
\draw[color=black] (1.96,3.35) node {$j+1$};
\draw [fill=black] (2.,1.) circle (1.5pt);
\draw[color=black] (1.8,0.85) node {$j$};
\draw [fill=black] (3.5,1.) circle (1.5pt);
\draw[color=black] (3.88,0.85) node {$j+2$};
\draw [fill=black] (3.5,3.) circle (1.5pt);
\draw[color=black] (3.52,3.36) node {$\#$};
\draw [fill=black] (4.5,3.) circle (1.5pt);
\draw[color=black] (4.5,3.41) node {$j+3$};
\draw [fill=black] (5.5,3.) circle (1.5pt);
\draw[color=black] (5.44,3.37) node {$n-1$};
\draw [fill=black] (6.52,3.) circle (1.5pt);
\draw[color=black] (6.48,3.33) node {$n$};
\draw[color=black] (-2.44,1.85) node {$t_1$};
\draw[color=black] (-1.44,1.85) node {$t_2$};
%\draw[color=black] (-0.44,1.85) node {$h$};
\draw[color=black] (0.56,1.85) node {$t_{j-1}$};
\draw[color=black] (1.84,2.43) node {$b_{j-1}$};
\draw[color=black] (1.28,1.37) node {$s_{j-1}$};
\draw[color=black] (2.8,0.85) node {$1$};
\draw[color=black] (2.72,2.87) node {$b_{m-j}$};
\draw[color=black] (4.06,2.85) node {$t_{m-j}$};
%\draw[color=black] (5.06,2.85) node {$t_2$};
\draw[color=black] (6.06,2.85) node {$t_1$};
\draw[color=black] (3.92,2.07) node {$s_{m-j}$};
\end{scriptsize}
\end{tikzpicture}
\end{center}
\caption{A linear 2-tree after $p=j-1\;$ $\Delta$--Y transformations are done on the left and $n-j-2=m-j$ transformations are done on the right.}
\label{fig:jplus1}
\end{figure}
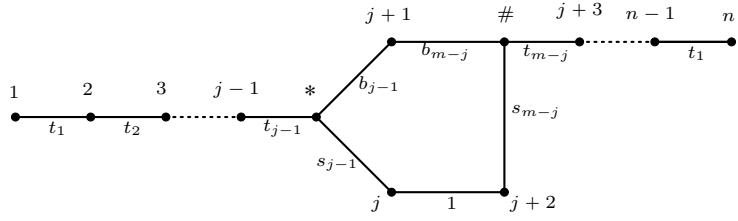 
	 
	 Base Case:  Let $k=1$.  Applying $\Delta$-Y transformations to both sides of the linear 2-tree yields the situation shown in Figure~\ref{fig:jplus1}.  The resistance distance $r_m(j,j+1)$ is given by a simple application of the parallel rule coupled with the cut vertex theorem.
	 \[\begin{array}{r}
\ds{
	 r_m(j,j+1)=\left(\frac{1}{{b_{j-1}}+{s_{j-1}}}+\frac{1}{{b_{m-j}}+{s_{m-j}}+1}\right)^{-1}
	 =\left(\frac{F_{2j}}{F_{2j-1}}+\frac{1}{\frac{F_{2m-2j+1}}{F_{2m-2j+2}}+1}\right)^{-1}
}
	 \\[6mm]
\ds{
	 =\left(\frac{F_{2j}}{F_{2j-1}}+\frac{F_{2m-2j+2}}{F_{2m-2j+3}}\right)^{-1}
	 =\frac{F_{2j-1}F_{2m-2j+3}}{F_{2j}F_{2m-2j+3}+F_{2m-2j+2}F_{2j-1}}
	 =\frac{F_{2j-1}F_{2m-2j+3}}{F_{2m+2}}
}
	\end{array}\]
	 as desired.

	We now assume that~\eqref{eq:wayneformulagensum} holds for $k$ and show it also holds for $k+1$.  By Lemma~\ref{lem43}
	
	\[\begin{aligned}r_m(j,j+k+1)-r_m(j,j+k) &=\dfrac{F_{k}F_{k+1}F_{k+2j-2}F_{k+2j-1}}{F_{2k+2j-2}F_{2k+2j}} \\
	&\qquad + \frac{(F_{2m-2j-2k+2}F_{k}F_{2j+k-2}+F_{2m-2j-2k+3}F_{2j+2k})F_{k+1}F_{2j+k-1}}{F_{2j+2k}F_{2m+2}}
		 \\
		&  \qquad \qquad - \frac{(F_{2m-2j-2k+4}F_{k-1}F_{2j+k-3}+F_{2m-2j-2k+5}F_{2j+2k-2})F_{k}F_{2j+k-2}}{F_{2j+2k-2}F_{2m+2}}.\\
	\end{aligned}\]
	After getting common denominators, we have 
	
	\begin{multline*}
	r_m(j,j+k+1)-r_m(j,j+k) =
	\frac{F_{2m+2}F_{k}F_{k+1}F_{k+2j-2}F_{k+2j-1} }{F_{2j+2k}F_{2j+2k-2}F_{2m+2}}\\
	+\frac{F_{2j+2k-2} (F_{2m-2j-2k+2}F_{k}F_{2j+k-2}+F_{2m-2j-2k+3}F_{2j+2k})F_{k+1}F_{2j+k-1}}{F_{2j+2k}F_{2j+2k-2}F_{2m+2}}\\
	-\frac{F_{2j+2k}(F_{2m-2j-2k+4}F_{k-1}F_{2j+k-3}+F_{2m-2j-2k+5}F_{2j+2k-2})F_{k}F_{2j+k-2}}{F_{2j+2k}F_{2j+2k-2}F_{2m+2}}
	\end{multline*}	
	
	\begin{multline}
	=\frac{F_{2m+2}F_{k}F_{k+1}F_{k+2j-2}F_{k+2j-1} +F_{2j+2k-2} F_{2m-2j-2k+2}F_{k}F_{2j+k-2}F_{k+1}F_{2j+k-1}}{F_{2j+2k}F_{2j+2k-2}F_{2m+2}}\\
	-\frac{F_{2j+2k}F_{2m-2j-2k+4}F_{k-1}F_{2j+k-3}F_{k}F_{2j+k-2}}{F_{2j+2k}F_{2j+2k-2}F_{2m+2}}\\
	+\frac{F_{2m-2j-2k+3}F_{k+1}F_{2j+k-1}-F_{2m-2j-2k+5}F_{k}F_{2j+k-2}}{F_{2m+2}}\\
	\end{multline}	
	
Given that $F_{2m+2} = F_{2m-2j-2k+4}F_{2j+2k} - F_{2m-2j-2k+2}F_{2j+2k-2}$ by Proposition~\ref{prop:splitdiff}, we combine the numerators of the first and second fractions to obtain 
\begin{multline}
F_{2j+2k-2}(F_{2m-2j-2k+2}F_{k}F_{2j+k-2}F_{k+1}F_{2j+k-1} - F_{2m-2j-2k+2}F_{k}F_{k+1}F_{k+2j-2}F_{k+2j-1} ) \\
+F_{2j+2k}(F_{2m-2j-2k+4}F_{k}F_{k+1}F_{k+2j-2}F_{k+2j-1}-F_{2m-2j-2k+4}F_{k-1}F_{2j+k-3}F_{k}F_{2j+k-2})\\
=F_{2j+2k}F_{2m-2j-2k+4}F_{k}F_{k+2j-2}(F_{k+1}F_{k+2j-1}-F_{k-1}F_{2j+k-3})\\
=F_{2j+2k}F_{2m-2j-2k+4}F_{k}F_{k+2j-2}(F_{2k+2j-2})\\
\end{multline}

Thus, 
\begin{multline*}
r_m(j,j+k+1)-r_m(j,j+k)
=	\frac{F_{2m-2j-2k+4}F_{k}F_{k+2j-2}+F_{2m-2j-2k+3}F_{k+1}F_{2j+k-1}-F_{2m-2j-2k+5}F_{k}F_{2j+k-2}}{F_{2m+2}}\\
=	\frac{\left[F_{k+1}F_{k+2j-1} - F_{k}F_{k+2j-2}\right]F_{2m-2k-2j+3}}{F_{2m+2}}\\
\end{multline*}

Adding this result to 
\[
\frac{\sum_{i=1}^{k}\left[F_iF_{i+2j-2} - F_{i-1}F_{i+2j-3}\right]F_{2m-2i-2j+5}}{F_{2m+2}}
\]
%$\displaystyle \frac{\sum_{i=1}^{k}\left[F_{i+j-3}F_{i+j}+2F_{j-1}^2(-1)^{i-1}\right]F_{2m-2i-2j+5}}{F_{2m+2}}$ 
yields
\[
\frac{\sum_{i=1}^{k+1}\left[F_iF_{i+2j-2} - F_{i-1}F_{i+2j-3}\right]F_{2m-2i+5-2j}}{F_{2m+2}}
\] as desired.  

%Apply Proposition~\ref{prop:tiein} yields the closed form.	

We now prove the second equality of the theorem by noting that if 
\[
C_k = \frac{F_{m+1}^2+F_k^2F_{m-2j-k+3}^2+F_{m+1}\left({F_{m-k}}(kL_k-F_k)+{F_{m-k+1}}\left((k-5)F_{k+1}+(2k+2)F_{k}\right)\right)/5}{F_{2m+2}}
\]	
then $\displaystyle C_1 = \frac{F_{m+1}^2+F_{m-2j+2}^2}{F_{2m+2}} = \frac{F_{2j-1}F_{2m-2j+3}}{F_{2m+2}}$ by Catalan's Identity and
\[\begin{aligned}
C_{k+1} - C_{k}% = 
&=\frac{F_{k+1}^2F_{m-2j-k+2}^2-F_k^2F_{m-2j-k+3}^2 }{F_{2m+2}}+\frac{F_{m+1}\left(F_{m-k-1}((k+1)L_{k+1}-F_{k+1})\right)}{5F_{2m+2}}\\
&\qquad +\frac{F_{m+1}}{5F_{2m+2}}\left(
F_{m-k}\left((k-4)F_{k+2}+(2k+4)F_{k+1}-kL_k+F_k\right)
-F_{m-k+1}\left((k-5)F_{k+1}+(2k+2)F_{k}\right)\right).\end{aligned}\]
Replacing $F_{m-k+1}$ by $F_{m-k} + F_{m-k-1}$, $F_{k+2}$ by $F_{k+1} + F_k$ and $L_{k+1}$ by $F_{k+2}+F_k$, and $L_k$ by $F_{k+1}+F_{k-1}$ gives \[\begin{aligned}
C_{k+1}=C_k
&=\frac{F_{k+1}^2F_{m-2j-k+2}^2-F_k^2F_{m-2j-k+3}^2 }{F_{2m+2}}\\
&\qquad+\frac{
F_{m+1}}{5F_{2m+2}}
\left(F_{m-k-1}((k+1)(F_{k+2}+F_k)-F_{k+1}-(k-5)F_{k+1}-(2k+2)F_{k})\right)\\
&\qquad\qquad+\frac{
F_{m+1}}{5F_{2m+2}}
\left(F_{m-k}\left(2kF_{k+1}+(k-3)F_k-kF_{k-1}-(k-5)F_{k+1}-(2k+2)F_{k}\right)\right)\\[2mm]
&=\frac{F_{k+1}^2F_{m-2j-k+2}^2-F_k^2F_{m-2j-k+3}^2 +F_{m+1}F_{m-k-1}F_{k+1}+F_{m+1}F_{m-k}F_{k-1}}{F_{2m+2}}\\[2mm]
&=\frac{F_{k+1}^2F_{m-2j-k+2}^2-F_k^2F_{m-2j-k+3}^2 +F_{m+1}F_{m-k+1}F_{k+1}-F_{m+1}F_{m-k}F_{k}}{F_{2m+2}}\\
\end{aligned}\]
Notice that the numerator above takes the form $F_{k+1}A - F_{k}B$, where

\[
A = F_{k+1}F_{m-2j-k+2}^2 + F_{m+1}F_{m-k+1}, \text{ and } B = F_kF_{m-2j-k+3}^2 + F_{m+1}F_{m-k}. 
\]
Proposition~\ref{app:simpA} and Proposition~\ref{app:simpB} demonstrate that 
\[
A = F_{2m-2j-2k+3}F_{2j+k-1}+F_kF_{m-2j-k+2}F_{m-2j-k+3}\]
and
\[B = F_{2j+k-2}  F_{2m-2j-2k+3}+ F_{k+1}F_{m-2j-k+3} F_{m-2j-k+2}.\]
Thus, 
\[\begin{aligned}
C_{k+1}-C_{k} &= (F_{k+1}A - F_kB)/F_{2m+2} \\
&= F_{2m-2j-2k+3}[F_{k+1}F_{2j+k-1}-F_k F_{2j+k-2} ]/F_{2m+2}\\
&=r_m(j,j+k+1) - r_m(j,j+k)
\end{aligned}\]
which gives the second equality. 

	\end{proof}

\section{Spanning trees and 2-forests in straight linear 2-trees}\label{sec:spanningtrees}

\begin{theorem}\label{thm:spanningtrees}
The number of spanning trees in the straight linear 2-tree with $m$ triangles is $F_{2m+2}$.
\end{theorem}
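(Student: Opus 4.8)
The plan is to prove the statement by a simultaneous induction on $m$ that couples the spanning tree count with an auxiliary count of certain spanning $2$-forests. For $m \ge 1$ let $t_m$ denote the number of spanning trees of the straight linear $2$-tree $G_{m+2}$, and let $g_m$ denote the number of spanning forests of $G_{m+2}$ having exactly two components that separate its last two vertices $m+1$ and $m+2$ (the low-degree vertices at the right end of Figure~\ref{fig:2tree}). I will show that $t_m = F_{2m+2}$ and, along the way, that $g_m = F_{2m+1}$.

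First I would set up two recursions by conditioning on the degree two vertex $n = m+2$, whose only edges are $\{n-2,n\}$ and $\{n-1,n\}$, and whose deletion from $G_n$ leaves $G_{n-1}$, the straight linear $2$-tree with $m-1$ triangles. For the spanning tree recursion: a spanning tree of $G_n$ cannot omit both edges at $n$; if it uses exactly one of them then $n$ is a leaf and deleting it leaves a spanning tree of $G_{n-1}$, contributing $2t_{m-1}$; if it uses both, then the path $n-1,n,n-2$ forces the restriction to $\{1,\dots,n-1\}$ to be a spanning $2$-forest of $G_{n-1}$ separating $n-1$ and $n-2$, contributing $g_{m-1}$. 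Hence $t_m = 2t_{m-1} + g_{m-1}$ for $m \ge 2$. For the $2$-forest recursion: in a separating spanning $2$-forest of $G_n$ the edge $\{n-1,n\}$ is never present, so either $n$ is isolated, in which case the other component is a spanning tree of $G_{n-1}$ and we get $t_{m-1}$ forests, or $n$ uses the edge $\{n-2,n\}$, in which case trimming this leaf leaves a spanning $2$-forest of $G_{n-1}$ separating $n-1$ and $n-2$, and we get $g_{m-1}$ forests. Hence $g_m = t_{m-1} + g_{m-1}$ for $m \ge 2$.

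With both recursions in hand the induction is routine. For the base case $m = 1$ we have $G_3 = K_3$, so $t_1 = 3 = F_4$, and $g_1 = 2 = F_3$ (the two separating $2$-forests are $\{1,2\}$ with $3$ isolated, and $\{1,3\}$ with $2$ isolated). Assuming $t_{m-1} = F_{2m}$ and $g_{m-1} = F_{2m-1}$, the defining Fibonacci recurrence $F_{k+1}=F_k+F_{k-1}$ gives
\[
t_m = 2F_{2m} + F_{2m-1} = F_{2m} + (F_{2m}+F_{2m-1}) = F_{2m}+F_{2m+1} = F_{2m+2},
\qquad
g_m = F_{2m}+F_{2m-1} = F_{2m+1},
\]
which closes the induction and yields $t_m = F_{2m+2}$ for all $m \ge 1$.

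The only real subtlety is the bookkeeping in the two conditioning arguments: one must verify that in each case the object left behind on $\{1,\dots,n-1\}$ is exactly a spanning tree, respectively a separating spanning $2$-forest, of the smaller straight linear $2$-tree, and that these correspondences are bijective (in particular that trimming the leaf $n$ neither creates nor destroys a component). Everything after that is a direct application of $F_{k+1}=F_k+F_{k-1}$. As an alternative endgame, eliminating $g_m$ between the two recursions gives $t_{m+1} = 3t_m - t_{m-1}$, which is the three-term recurrence satisfied by $(F_{2m+2})_{m\ge 1}$, so matching the two initial values $t_1 = 3 = F_4$ and $t_2 = 8 = F_6$ also finishes the proof.
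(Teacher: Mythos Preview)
Your argument is correct, and it takes a genuinely different route from the paper's.  The paper partitions the set of spanning trees according to the highest-indexed \emph{exterior} edge that is absent, argues that forcing the remaining exterior edges to be present kills all the interior diagonals to their right, and thereby reduces each block of the partition to a spanning-tree count on a shorter straight linear $2$-tree; summing gives $1+F_2+F_4+\cdots+F_{2m-2}+2F_{2m}=F_{2m+2}$ via the identity in Proposition~\ref{prop:longsum}.  Your approach instead deletes the degree-$2$ end vertex and couples the spanning-tree count $t_m$ with the auxiliary count $g_m$ of spanning $2$-forests separating the two rightmost vertices, obtaining the joint recursion $t_m=2t_{m-1}+g_{m-1}$, $g_m=t_{m-1}+g_{m-1}$.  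This is cleaner in that it yields the second-order recurrence $t_{m+1}=3t_m-t_{m-1}$ directly and, as a by-product, the identity $g_m=F_{2m+1}$, which is exactly the $k=1$, $j=m+1$ case of the paper's later $2$-forest count in Equation~\eqref{eq:wayneformulagen2}.  The paper's partition, on the other hand, gives a more ``unrolled'' combinatorial picture and uses only the single inductive hypothesis on $t$, at the cost of the somewhat delicate case analysis in Table~\ref{tab:tree}.  One small inaccuracy in your write-up: vertex $m+1=n-1$ is not a degree-$2$ vertex when $m\ge 2$, so the parenthetical description should be dropped; it does not affect the proof.
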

\begin{proof}
For ease of notation we will let $\mathscr{S}_m = $ the collection of spanning trees in a  linear 2-tree  with $m$ triangles. 
It is straightforward to verify that $|\mathscr{S}_1| = 3 = F_4$.  Now suppose that $|\mathscr{S}_k| = F_{2k+2}$ for all $k<m$. 
Consider a linear 2-tree $G_n$ with $n$ vertices and $m=n-2$ triangles, labeling the exterior edges from left to right, as shown in the figure below.  

\begin{center}
\begin{tikzpicture}[line cap=round,line join=round,>=triangle 45,x=1.0cm,y=1.0cm,scale = 1.2]
\draw (-2.,1.)-- (-0.7,1.);
\draw (-0.7,1.)-- (-1.35,2.1258330249197703);
\draw (-1.35,2.1258330249197703)-- (-2.,1.);
\draw (-0.05,2.1258330249197703)-- (-1.35,2.1258330249197703);
\draw (-0.05,2.1258330249197703)-- (-0.7,1.);
\draw (-0.7,1.)-- (0.6,1.);
\draw (-0.05,2.1258330249197703)-- (0.6,1.);
\draw[dotted] (-0.05,2.1258330249197703)-- (1.25,2.12583302491977);
\draw[dotted] (0.6,1.)-- (1.9,1.);
%\draw [fill=black] (.35,2.1258330249197703) circle (1pt);
%\draw [fill=black] (.6,2.1258330249197703) circle (1pt);
%\draw [fill=black] (.85,2.1258330249197703) circle (1pt);

%\draw [fill=black] (1,1.) circle (1pt);
%\draw [fill=black] (1.25,1.) circle (1pt);
%\draw [fill=black] (1.5,1.) circle (1pt);

\draw (1.9,1.)-- (1.25,2.12583302491977);
\draw (1.9,1.)-- (2.55,2.1258330249197694);
\draw (2.55,2.1258330249197694)-- (1.25,2.1258330249197703);
\begin{scriptsize}
\draw [fill=black] (-2.,1.) circle (1.5pt);
\draw [color = black] (-2.,.8) node {$1$};
\draw [fill=black] (-0.7,1.) circle (1.5pt);
\draw [color = black] (-.7,.8) node {$3$};
\draw[color=black] (-1.3558677685950407,0.8) node {$e_2$};
\draw [fill=black] (-1.35,2.1258330249197703) circle (1.5pt);
\draw [color = black] (-1.35,2.4) node {$2$};
\draw[color=black] (-1.8,1.725619834710745) node {$e_1$};
\draw [fill=black] (-0.05,2.1258330249197703) circle (1.5pt);
\draw [color = black] (-.05,2.4) node {$4$};
\draw[color=black] (-0.6285950413223138,2.4) node {$e_3$};
\draw [fill=black] (0.6,1.) circle (1.5pt);
\draw [color = black] (0.6,.8) node {$5$};
\draw[color=black] (-0.08314049586776859,0.8) node {$e_4$};
\draw [fill=black] (1.25,2.12583302491977) circle (1.5pt);
\draw [color = black] (1.25,2.4) node {$m$};
\draw [fill=black] (1.9,1.) circle (1.5pt);
\draw [color = black] (1.9,.8) node {$m+1$};
\draw [fill=black] (2.55,2.1258330249197694) circle (1.5pt);
\draw [color = black] (2.55,2.4) node {$m+2$};
\draw[color=black] (2.495371900826445,1.5768595041322329) node {$e_{m+2}$};
\draw [fill=black] (1.25,2.1258330249197703) circle (1.5pt);
\draw[color=black] (1.7680991735537184,2.4) node {$e_{m+1}$};
\end{scriptsize}
\end{tikzpicture}
\end{center}

Notice that no spanning tree can contain all of the exterior edges. 
Thus, we can partition $\mathscr{S}_m$ into $m+2$ sets as follows.  
\begin{center}
\begin{tabular}{ll}
$\mathscr{C}_1$ = trees in $\mathscr{S}_m$ which contain $e_2, e_3, \ldots, e_{m+2}$ but do not contain $e_1$. \\[2mm]
$\mathscr{C}_2$ = trees in $\mathscr{S}_m$ which contain $e_3, e_4, \ldots, e_{m+2}$ but do not contain $e_2$. \\[2mm]
$\mathscr{C}_3$ = trees in $\mathscr{S}_m$ which contain $e_4, e_5, \ldots, e_{m+2}$ but do not contain $e_3$.\\[2mm]
$\mathscr{C}_4$ = trees in $\mathscr{S}_m$ which contain $e_5, e_6, \ldots, e_{m+2}$ but do not contain $e_4$.\\ [2mm]
\ \ \ $\vdots$ \\[2mm]
$\mathscr{C}_{m+2}$ = trees in $\mathscr{S}_m$ which do not contain $e_{m+2}$.  \\ [2mm]
\end{tabular}
\end{center}

In general, if $k \leq m$ and the edges $e_{k+1}, \ldots, e_{m+2}$ are each contained in a spanning tree $T$ for $G_m$, then none of the diagonal edges $\{i,i+1\}$ for $i \geq k$ can be contained in $T$.  Thus, the trees contained in $\mathscr{C}_k$ for $2<k \leq m$ are exactly the same as the spanning trees of the graph $G_m$ with the edges $e_k$ and $\{i,i+1\}_{i\geq k}$ removed.   By examining the Table~\ref{tab:tree}, it is apparent that the number of spanning trees in $\mathscr{C}_k$ for $k \geq 2$ is the same as the number of spanning trees in $G_{k-2}$, which, according to our inductive hypothesis, is exactly $F_{2(k-2) + 2}$. 
Finally, we conclude that 
\[
|\mathscr{S}_m| = 1 + F_2 + F_4 + \ldots + F_{2m-2} + 2F_{2m} = F_{2m+2}.
\]
\end{proof}
Theorems \ref{thm:main} and  \ref{thm:spanningtrees} together with Lemma 2 in~\cite{BapatWheels} give the following combinatorial result. 

\begin{theorem}
Let $G$ be a straight linear 2-tree with $n = m+2$ vertices.  Then the number of spanning 2-forests of $G$ which separate nodes $j$ and $j+k$ is 

\begin{multline}\label{eq:wayneformulagen2}
	\sum_{i=1}^{k}\left[F_iF_{i+2j-2} - F_{i-1}F_{i+2j-3}\right]F_{2m-2i-2j+5}\\
	= F_{m+1}^2+F_k^2F_{m-2j-k+3}^2+\frac{F_{m+1}}{5}\left({F_{m-k}}(kL_k-F_k)
	+F_{m-k+1}\left((k-5)F_{k+1}+(2k+2)F_{k}\right)\right).
\end{multline}

\end{theorem}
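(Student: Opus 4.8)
The plan is to assemble the final statement directly from three ingredients that are already in hand: Theorem~\ref{thm:main}, Theorem~\ref{thm:spanningtrees}, and the classical combinatorial interpretation of effective resistance recorded in Lemma 2 of~\cite{BapatWheels}. That lemma says that for a connected graph $G$ carrying unit resistance on every edge, and for distinct vertices $a,b$,
\[
r_G(a,b) = \frac{f_G(a,b)}{\tau(G)},
\]
where $\tau(G)$ is the number of spanning trees of $G$ and $f_G(a,b)$ is the number of spanning $2$-forests of $G$ (spanning forests with exactly two components) in which $a$ and $b$ lie in different components, i.e. the spanning $2$-forests separating $a$ and $b$.

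First I would check the hypotheses: $G$ is connected, every edge has weight $1$ (the standing assumption throughout our treatment of the straight linear $2$-tree), and $j \neq j+k$ since $k \geq 1$. Applying the lemma with $a=j$, $b=j+k$ and rearranging gives
\[
f_G(j,j+k) = r_m(j,j+k)\,\tau(G).
\]
Next I would substitute the two quantities already computed: $\tau(G) = F_{2m+2}$ by Theorem~\ref{thm:spanningtrees}, and the two equal expressions for $r_m(j,j+k)$ supplied by Theorem~\ref{thm:main}, each written as a fraction with denominator $F_{2m+2}$. The factor $F_{2m+2}$ cancels, leaving precisely the two right-hand sides of~\eqref{eq:wayneformulagen2}. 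That the sum form and the closed form agree is already part of the conclusion of Theorem~\ref{thm:main}, so nothing further needs to be verified on that point.

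The only subtlety — and the closest thing to an obstacle — is bookkeeping: confirming that "spanning $2$-forest separating $j$ and $j+k$" as counted by Lemma 2 of~\cite{BapatWheels} is exactly the quantity named in the theorem (this is just the all-minors matrix-tree theorem packaged into that lemma), and observing that the resulting expression is automatically a nonnegative integer since it counts subgraphs, even though neither closed form is manifestly integral. Beyond invoking Theorems~\ref{thm:main} and~\ref{thm:spanningtrees}, there is no computation to carry out.
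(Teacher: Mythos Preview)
Your proposal is correct and follows exactly the paper's own argument: the theorem is stated as an immediate consequence of Theorems~\ref{thm:main} and~\ref{thm:spanningtrees} together with Lemma~2 of~\cite{BapatWheels}, with no additional computation. Your discussion of the bookkeeping (the meaning of separating $2$-forests and the automatic integrality) is more explicit than what the paper records, but the underlying approach is identical.
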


\begin{table}[!ht]
\begin{center}
\begin{tabular}{| c | c | c |}
\hline 
$k$ & ${G}_m$ with relevant edges removed & $|\mathscr{C}_k|$ \\\hline
1 & \begin{tikzpicture}[line cap=round,line join=round,>=triangle 45,x=1.0cm,y=1.0cm]
%\clip(-3.7029752066115678,-2.059504132231399) rectangle (5.8838016528925605,5.69256198347107);
\draw (-1.35,2.1258330249197703)-- (-0.05,2.1258330249197703);
\draw[dotted] (1.25,2.12583302491977)-- (-0.05,2.1258330249197703);
\draw (1.25,2.12583302491977)-- (2.55,2.1258330249197694);
\draw (2.55,2.1258330249197694)-- (1.9,1.);
\draw (-2.,1.)-- (-0.7,1.);
\draw[dotted] (-0.7,1.)-- (0.6,1.);
%\draw [fill=black] (-0.3,1.) circle (1pt);
%\draw [fill=black] (-0.05,1.) circle (1pt);
%\draw [fill=black] (0.2,1.) circle (1pt);

\draw (0.6,1.)-- (1.9,1.);
\begin{scriptsize}
\draw [fill=black] (-2.,1.) circle (1.5pt);
\draw [fill=black] (-0.7,1.) circle (1.5pt);
\draw [fill=black] (-1.35,2.1258330249197703) circle (1.5pt);
\draw [fill=black] (-0.05,2.1258330249197703) circle (1.5pt);
\draw [fill=black] (0.6,1.) circle (1.5pt);
\draw [fill=black] (1.25,2.12583302491977) circle (1.5pt);
\draw [fill=black] (1.9,1.) circle (1.5pt);
\draw [fill=black] (2.55,2.1258330249197694) circle (1.5pt);
\draw [fill=black] (1.25,2.1258330249197703) circle (1.5pt);
\draw[color=black] (-0.711239669421486,2.4) node {$e_3$};
%\draw[color=black] (0.6771900826446292,2.585123966942149) node {$q_1$};
%\draw [fill=black] (.35,2.1258330249197703) circle (1pt);
%\draw [fill=black] (.6,2.1258330249197703) circle (1pt);
%\draw [fill=black] (.85,2.1258330249197703) circle (1pt);

\draw[color=black] (1.8672727272727279,2.4) node {$e_{m+1}$};
\draw[color=black] (2.613553719008265,1.4942148760330594) node {$e_{m+2}$};
\draw[color=black] (-1.2732231404958658,0.8) node {$e_2$};
%\draw[color=black] (0.032561983471075764,0.93223140495868) node {$e_4$};
\draw[color=black] (1.3218181818181827,0.8) node {$e_m$};
\end{scriptsize}
\end{tikzpicture}
 & 1 \\   \hline
 2 & 
 \begin{tikzpicture}[line cap=round,line join=round,>=triangle 45,x=1.0cm,y=1.0cm]
 \draw (2.55,2.1258330249197694)-- (3.85,2.125833024919768);
\draw (-1.35,2.1258330249197703)-- (-0.05,2.1258330249197703);
\draw (1.25,2.12583302491977)-- (-0.05,2.1258330249197703);
\draw[dotted] (1.25,2.12583302491977)-- (2.55,2.1258330249197694);
%\draw [fill=black] (1.65,2.12583302491977) circle (1pt);
%\draw [fill=black] (1.9,2.12583302491977) circle (1pt);
%\draw [fill=black] (2.15,2.12583302491977) circle (1pt);

\draw (-0.7,1.)-- (0.6,1.);
\draw (3.2,1.)-- (3.85,2.125833024919768);
\draw[dotted] (0.6,1.)-- (1.9,1.);
\draw (1.9,1.)-- (3.2,1.);
%\draw [fill=black] (1,1) circle (1pt);
%\draw [fill=black] (1.25,1) circle (1pt);
%\draw [fill=black] (1.5,1) circle (1pt);

\draw (-1.35,2.1258330249197703)-- (-2.,1.);
\begin{scriptsize}
\draw [fill=black] (-2.,1.) circle (1.5pt);
\draw [fill=black] (-0.7,1.) circle (1.5pt);
\draw [fill=black] (-1.35,2.1258330249197703) circle (1.5pt);
\draw [fill=black] (-0.05,2.1258330249197703) circle (1.5pt);
\draw[color=black] (0.4623140495867786,2.4033057851239676) node {$e_5$};
\draw [fill=black] (0.6,1.) circle (1.5pt);
\draw[color=black] (-0.19884297520660965,0.8) node {$e_4$};
\draw [fill=black] (1.25,2.12583302491977) circle (1.5pt);
\draw [fill=black] (1.9,1.) circle (1.5pt);
\draw [fill=black] (2.55,2.1258330249197694) circle (1.5pt);
\draw [fill=black] (3.2,1.) circle (1.5pt);
\draw [fill=black] (3.85,2.125833024919768) circle (1.5pt);
\draw[color=black] (3.14,2.4) node {$e_{m+1}$};
\draw[color=black] (-0.7608264462809896,2.4) node {$e_3$};
%\draw[color=black] (1.8507438016528936,2.436363636363637) node {$r_1$};
\draw[color=black] (3.9342148760330583,1.626446280991737) node {$e_{m+2}$};
%\draw[color=black] (1.3218181818181831,0.93223140495868) node {$c_1$};
\draw[color=black] (2.5614876033057863,0.8) node {$e_m$};
\draw[color=black] (-1.8352066115702454,1.890909090909092) node {$e_1$};
\end{scriptsize}
\end{tikzpicture} & $F_2$ \\ \hline
%%%%%%%%%3%%%%%%%%%%%%
3 & 
%e_3
\begin{tikzpicture}[line cap=round,line join=round,>=triangle 45,x=1.0cm,y=1.0cm]
\draw (-2.,1.)-- (-0.7,1.);
\draw (-1.35,2.1258330249197703)-- (-0.7,1.);
\draw (2.55,2.1258330249197694)-- (3.85,2.125833024919768);
\draw (1.25,2.12583302491977)-- (-0.05,2.1258330249197703);
\draw[dotted] (1.25,2.12583302491977)-- (2.55,2.1258330249197694);
%\draw [fill=black] (1.65,2.12583302491977) circle (1pt);
%\draw [fill=black] (1.9,2.12583302491977) circle (1pt);
%\draw [fill=black] (2.15,2.12583302491977) circle (1pt);

\draw (-2.,1.)-- (-0.7,1.);
\draw (-0.7,1.)-- (0.6,1.);
\draw (3.2,1.)-- (3.85,2.125833024919768);
\draw[dotted] (0.6,1.)-- (1.9,1.);
%\draw [fill=black] (1,1) circle (1pt);
%\draw [fill=black] (1.25,1) circle (1pt);
%\draw [fill=black] (1.5,1) circle (1pt);

\draw (1.9,1.)-- (3.2,1.);
\draw (-1.35,2.1258330249197703)-- (-2.,1.);
\begin{scriptsize}
\draw [fill=black] (-2.,1.) circle (1.5pt);
\draw [fill=black] (-0.7,1.) circle (1.5pt);
\draw[color=black] (-1.3558677685950389,0.8) node {$e_2$};
\draw [fill=black] (-1.35,2.1258330249197703) circle (1.5pt);
\draw [fill=black] (-0.05,2.1258330249197703) circle (1.5pt);
\draw[color=black] (0.4623140495867786,2.4) node {$e_5$};
\draw [fill=black] (0.6,1.) circle (1.5pt);
%\draw[color=black] (-0.19884297520660965,0.8330578512396718) node {$E$};
\draw [fill=black] (1.25,2.12583302491977) circle (1.5pt);
\draw [fill=black] (1.9,1.) circle (1.5pt);
\draw [fill=black] (2.55,2.1258330249197694) circle (1.5pt);
\draw [fill=black] (3.2,1.) circle (1.5pt);
\draw [fill=black] (3.85,2.125833024919768) circle (1.5pt);
\draw[color=black] (3.14,2.4) node {$e_{m+1}$};
%\draw[color=black] (0.9747107438016543,2.9818181818181815) node {$q_1$};
%\draw[color=black] (1.8507438016528936,2.436363636363637) node {$r_1$};
\draw[color=black] (-1.8847933884297494,1.8082644628099185) node {$e_1$};
\draw[color=black] (0.032561983471076236,0.8) node {$e_4$};
\draw[color=black] (3.9342148760330583,1.626446280991737) node {$e_{m+2}$};
%\draw[color=black] (1.3218181818181831,0.93223140495868) node {$c_1$};
\draw[color=black] (2.5614876033057863,0.8991735537190106) node {$e_{m}$};
\end{scriptsize}
\end{tikzpicture}

& $F_{4}$ \\ \hline 

%%%%%%%%%%%%4%%%%%%%%%%%%%%%%%%

4 & %e_4%
\begin{tikzpicture}[line cap=round,line join=round,>=triangle 45,x=1.0cm,y=1.0cm]
%\clip(-3.702975206611567,-2.059504132231399) rectangle (5.883801652892562,5.69256198347107);
\draw (-2.,1.)-- (-0.7,1.);
\draw (-1.35,2.1258330249197703)-- (-0.7,1.);
\draw (-0.05,2.1258330249197703)-- (-0.7,1.);
\draw (2.55,2.1258330249197694)-- (3.85,2.125833024919768);
\draw (-1.35,2.1258330249197703)-- (-0.05,2.1258330249197703);
\draw (1.25,2.12583302491977)-- (-0.05,2.1258330249197703);
%\draw [fill=black] (1.65,2.12583302491977) circle (1pt);
%\draw [fill=black] (1.9,2.12583302491977) circle (1pt);
%\draw [fill=black] (2.15,2.12583302491977) circle (1pt);
\draw[dotted] (1.25,2.12583302491977)-- (2.55,2.1258330249197694);
\draw (-2.,1.)-- (-0.7,1.);
\draw (3.2,1.)-- (3.85,2.125833024919768);
\draw (0.6,1.)-- (1.9,1.);
\draw[dotted] (1.9,1.)-- (3.2,1.);
%\draw [fill=black] (2.3,1.) circle (1pt);
%\draw [fill=black] (2.55,1.) circle (1pt);
%\draw [fill=black] (2.8,1.) circle (1pt);
\draw (-1.35,2.1258330249197703)-- (-2.,1.);
\begin{scriptsize}
\draw [fill=black] (-2.,1.) circle (1.5pt);
\draw [fill=black] (-0.7,1.) circle (1.5pt);
\draw[color=black] (-1.3558677685950389,0.8) node {$e_2$};
\draw [fill=black] (-1.35,2.1258330249197703) circle (1.5pt);
\draw [fill=black] (-0.05,2.1258330249197703) circle (1.5pt);
\draw[color=black] (0.4623140495867786,2.4) node {$e_5$};
\draw[color=black] (-0.6451239669421467,2.3702479338842983) node {$e_3$};
\draw [fill=black] (0.6,1.) circle (1.5pt);
\draw [fill=black] (1.25,2.12583302491977) circle (1.5pt);
\draw [fill=black] (1.9,1.) circle (1.5pt);
\draw [fill=black] (2.55,2.1258330249197694) circle (1.5pt);
\draw [fill=black] (3.2,1.) circle (1.5pt);
\draw [fill=black] (3.85,2.125833024919768) circle (1.5pt);
\draw[color=black] (3.14,2.4) node {$e_{m+1}$};
%\draw[color=black] (1.8507438016528936,2.5190082644628102) node {$r_1$};
\draw[color=black] (-1.9509090909090883,1.8247933884297534) node {$e_1$};
\draw[color=black] (3.9342148760330583,1.626446280991737) node {$e_{m+2}$};
\draw[color=black] (1.3218181818181831,0.8) node {$e_{6}$};
%\draw[color=black] (2.5614876033057863,0.8991735537190106) node {$d_1$};
\end{scriptsize}
\end{tikzpicture}

& 

$F_6$ \\ \hline 

 & &  \\   
\vdots & \vdots & \vdots  \\ 
 & &   \\ \hline  
%%%%%%%%%%%%m-1%%%%%%%%%%%%%%%%%%

$m-1$ & \begin{tikzpicture}[line cap=round,line join=round,>=triangle 45,x=1.0cm,y=1.0cm]
%\clip(-3.7029752066115673,-2.059504132231399) rectangle (5.883801652892561,5.69256198347107);
%\draw(-2.,1.) -- (-0.7,1.) -- (-1.35,2.1258330249197703) -- cycle;
%\draw(-0.05,2.1258330249197703) -- (-0.7,1.) -- (0.6,1.) -- cycle;
\draw (-2.,1.)-- (-0.7,1.);
\draw (-0.7,1.)-- (-1.35,2.1258330249197703);
\draw (-1.35,2.1258330249197703)-- (-2.,1.);
\draw (-1.35,2.1258330249197703)-- (-0.7,1.);
\draw (-0.05,2.1258330249197703)-- (-0.7,1.);
%\draw (0.6,1.)-- (-0.05,2.1258330249197703);
%\draw (-0.05,2.1258330249197703)-- (0.6,1.);
\draw (1.25,2.12583302491977)-- (1.9,1.);
\draw (2.55,2.1258330249197694)-- (3.85,2.125833024919768);
\draw (-1.35,2.1258330249197703)-- (-0.05,2.1258330249197703);
\draw[dotted] (1.25,2.12583302491977)-- (-0.05,2.1258330249197703);
%\draw [fill=black] (.85,2.12583302491977) circle (1pt);
%\draw [fill=black] (.6,2.12583302491977) circle (1pt);
%\draw [fill=black] (.35,2.12583302491977) circle (1pt);
\draw (-2.,1.)-- (-0.7,1.);
\draw[dotted] (-0.7,1.)-- (0.6,1.);
%\draw [fill=black] (0.2,1.) circle (1pt);
%\draw [fill=black] (-0.05,1.) circle (1pt);
%\draw [fill=black] (-0.3,1.) circle (1pt);

\draw (3.2,1.)-- (3.85,2.125833024919768);
\draw (0.6,1.)-- (1.9,1.);
\draw (1.9,1.)-- (3.2,1.);
\draw (0.6,1.)-- (1.25,2.12583302491977);
\begin{scriptsize}
\draw [fill=black] (-2.,1.) circle (1.5pt);
\draw [fill=black] (-0.7,1.) circle (1.5pt);
\draw[color=black] (-1.3558677685950389,0.8) node {$e_2$};
\draw [fill=black] (-1.35,2.1258330249197703) circle (1.5pt);
\draw[color=black] (-1.836033057851237,1.725619834710745) node {$e_1$};
\draw [fill=black] (-0.05,2.1258330249197703) circle (1.5pt);
\draw [fill=black] (0.6,1.) circle (1.5pt);
%\draw[color=black] (-0.0831404958677667,0.9157024793388453) node {$p$};
\draw [fill=black] (1.25,2.12583302491977) circle (1.5pt);
\draw [fill=black] (1.9,1.) circle (1.5pt);
\draw [fill=black] (2.55,2.1258330249197694) circle (1.5pt);
\draw [fill=black] (3.2,1.) circle (1.5pt);
\draw [fill=black] (3.85,2.125833024919768) circle (1.5pt);
\draw[color=black] (3.14,2.4) node {$e_{m+1}$};
\draw[color=black] (-0.7608264462809896,2.4) node {$e_2$};
\draw[color=black] (4.0342148760330583,1.626446280991737) node {$e_{m+2}$};
\draw[color=black] (1.3218181818181831,0.8) node {$e_{m-2}$};
\draw[color=black] (2.5614876033057863,0.8) node {$e_m$};
%\draw[color=black] (0.5284297520661173,2.4528925619834716) node {$f_2$};
\end{scriptsize}
\end{tikzpicture}

& $F_{2(m-3) + 2}$ \\ \hline 

$m$ & 

\begin{tikzpicture}[line cap=round,line join=round,>=triangle 45,x=1.0cm,y=1.0cm]
\draw(-2.,1.) -- (-0.7,1.) -- (-1.35,2.1258330249197703) -- cycle;
\draw(-0.05,2.1258330249197703) -- (-0.7,1.) -- (0.6,1.) -- cycle;
\draw (-2.,1.)-- (-0.7,1.);
\draw (-0.7,1.)-- (-1.35,2.1258330249197703);
\draw (-1.35,2.1258330249197703)-- (-2.,1.);
\draw (-0.05,2.1258330249197703)-- (-0.7,1.);
\draw (-0.7,1.)-- (0.6,1.);
\draw (0.6,1.)-- (-0.05,2.1258330249197703);
\draw (1.25,2.12583302491977)-- (1.9,1.);
\draw (2.55,2.1258330249197694)-- (3.85,2.125833024919768);
\draw (-1.35,2.1258330249197703)-- (-0.05,2.1258330249197703);
\draw[dotted] (-0.05,2.1258330249197703)-- (1.25,2.1258330249197703);
%\draw [fill=black] (.35,2.1258330249197703) circle (1pt);
%\draw [fill=black] (.6,2.1258330249197703) circle (1pt);
%\draw [fill=black] (.85,2.1258330249197703) circle (1pt);

\draw[dotted] (0.6,1.)-- (1.9,1.);
%\draw [fill=black] (1,1) circle (1pt);
%\draw [fill=black] (1.25,1) circle (1pt);
%\draw [fill=black] (1.5,1) circle (1pt);

\draw (1.25,2.12583302491977)-- (2.55,2.1258330249197694);
\draw (2.55,2.1258330249197694)-- (1.9,1.);
\draw (-2.,1.)-- (-0.7,1.);
\draw (-0.7,1.)-- (0.6,1.);
\draw (3.2,1.)-- (3.85,2.125833024919768);
\begin{scriptsize}
\draw [fill=black] (-2.,1.) circle (1.5pt);
\draw [fill=black] (-0.7,1.) circle (1.5pt);
\draw[color=black] (-1.3558677685950389,0.8) node {$e_2$};
\draw [fill=black] (-1.35,2.1258330249197703) circle (1.5pt);
\draw [fill=black] (-0.05,2.1258330249197703) circle (1.5pt);
\draw [fill=black] (0.6,1.) circle (1.5pt);
\draw [fill=black] (1.25,2.12583302491977) circle (1.5pt);
\draw [fill=black] (1.9,1.) circle (1.5pt);
\draw [fill=black] (2.55,2.1258330249197694) circle (1.5pt);
\draw [fill=black] (3.2,1.) circle (1.5pt);
\draw [fill=black] (3.85,2.125833024919768) circle (1.5pt);
\draw[color=black] (3.14,2.4) node {$e_{m+1}$};
\draw[color=black] (-0.7608264462809896,2.4) node {$e_3$};
\draw[color=black] (1.8507438016528936,2.4) node {$e_{m-1}$};
\draw[color=black] (-1.8847933884297494,1.8082644628099185) node {$e_1$};
\draw[color=black] (0.032561983471076236,0.8) node {$e_4$};
\draw[color=black] (3.99342148760330583,1.626446280991737) node {$e_{m+2}$};
\end{scriptsize}
\end{tikzpicture}

& $F_{2(m-2) + 2}$ \\ \hline 

$m + 1$ & 

\begin{tikzpicture}[line cap=round,line join=round,>=triangle 45,x=1.0cm,y=1.0cm]
\draw(-2.,1.) -- (-0.7,1.) -- (-1.35,2.1258330249197703) -- cycle;
\draw(-0.05,2.1258330249197703) -- (-0.7,1.) -- (0.6,1.) -- cycle;
\draw (-2.,1.)-- (-0.7,1.);
\draw (-0.7,1.)-- (-1.35,2.1258330249197703);
\draw (-1.35,2.1258330249197703)-- (-2.,1.);
\draw (-0.05,2.1258330249197703)-- (-0.7,1.);
\draw (-0.7,1.)-- (0.6,1.);
\draw (0.6,1.)-- (-0.05,2.1258330249197703);
\draw (-1.35,2.1258330249197703)-- (-0.05,2.1258330249197703);
\draw (1.25,2.12583302491977)-- (-0.05,2.1258330249197703);
\draw (2.55,2.1258330249197694)-- (1.9,1.);
\draw (-2.,1.)-- (-0.7,1.);
\draw (-0.7,1.)-- (0.6,1.);
\draw (3.2,1.)-- (3.85,2.125833024919768);
\draw (1.9,1.)-- (3.2,1.);
\draw (3.2,1.)-- (2.55,2.1258330249197694);
\draw (0.6,1.)-- (1.25,2.12583302491977);
\begin{scriptsize}
\draw [fill=black] (-2.,1.) circle (1.5pt);
\draw [fill=black] (-0.7,1.) circle (1.5pt);
\draw[color=black] (-1.3558677685950389,0.8) node {$e_2$};
\draw [fill=black] (-1.35,2.1258330249197703) circle (1.5pt);
\draw [fill=black] (-0.05,2.1258330249197703) circle (1.5pt);
\draw [fill=black] (0.6,1.) circle (1.5pt);
\draw [fill=black] (1.25,2.12583302491977) circle (1.5pt);
\draw [fill=black] (1.9,1.) circle (1.5pt);
\draw [fill=black] (2.55,2.1258330249197694) circle (1.5pt);
\draw [fill=black] (3.2,1.) circle (1.5pt);
\draw [fill=black] (3.85,2.125833024919768) circle (1.5pt);
\draw[color=black] (-0.7608264462809896,2.5024793388429756) node {$e_3$};
\draw[color=black] (-1.8847933884297494,1.8082644628099185) node {$e_1$};
\draw[color=black] (0.032561983471076236,0.8) node {$e_4$};
\draw[color=black] (3.9942148760330583,1.626446280991737) node {$e_{m+2}$};
\draw[color=black] (2.5614876033057863,0.8991735537190106) node {$e_m$};
\draw[color=black] (0.5284297520661173,2.4528925619834716) node {$e_5$};
%\draw [fill=black] (1,1.) circle (1pt);
%\draw [fill=black] (1.25,1.) circle (1pt);
%\draw [fill=black] (1.5,1.) circle (1pt);
\draw[dotted] (.6,1)-- (1.9,1);

\draw[dotted] (1.25,2.12583302491977)-- (2.55,2.1258330249197703);
%\draw [fill=black] (1.65,2.1258330249197703) circle (1pt);
%\draw [fill=black] (1.9,2.1258330249197703) circle (1pt);
%\draw [fill=black] (2.15,2.1258330249197703) circle (1pt);

\end{scriptsize}
\end{tikzpicture}

& $F_{2(m-1) + 2}$ \\ \hline 

$m + 2$ & 
  \begin{tikzpicture}[line cap=round,line join=round,>=triangle 45,x=1.0cm,y=1.0cm]
%\draw(-2.,1.) -- (-0.7,1.) -- (-1.35,2.1258330249197703) -- cycle;
%\draw(-1.35,2.1258330249197703) -- (-0.7,1.) -- (-0.05,2.1258330249197703) -- cycle;
%\draw(-0.05,2.1258330249197703) -- (-0.7,1.) -- (0.6,1.) -- cycle;
%\draw(-0.05,2.1258330249197703) -- (0.6,1.) -- (1.25,2.12583302491977) -- cycle;
%\draw(2.55,2.1258330249197694) -- (1.9,1.) -- (3.2,1.) -- cycle;
\draw (-2.,1.)-- (-0.7,1.);
%\draw (-0.7,1.)-- (-1.35,2.1258330249197703);
\draw (-1.35,2.1258330249197703)-- (-2.,1.);
\draw (-1.35,2.1258330249197703)-- (-0.7,1.);
%\draw (-0.7,1.)-- (-0.05,2.1258330249197703);
\draw (-0.05,2.1258330249197703)-- (-1.35,2.1258330249197703);

\draw (-0.05,2.1258330249197703)-- (-0.7,1.);
\draw (-0.7,1.)-- (0.6,1.);
\draw (0.6,1.)-- (-0.05,2.1258330249197703);
%\draw (-0.05,2.1258330249197703)-- (0.6,1.);
\draw (0.6,1.)-- (1.25,2.12583302491977);
\draw[dotted] (0.6,1.)-- (1.9,1);
%\draw [fill=black] (1,1.) circle (1pt);
%\draw [fill=black] (1.25,1.) circle (1pt);
%\draw [fill=black] (1.5,1.) circle (1pt);

\draw (1.25,2.12583302491977)-- (-0.05,2.1258330249197703);
\draw[dotted] (1.25,2.12583302491977)-- (2.55,2.1258330249197703);
%\draw [fill=black] (1.65,2.1258330249197703) circle (1pt);
%\draw [fill=black] (1.9,2.1258330249197703) circle (1pt);
%\draw [fill=black] (2.15,2.1258330249197703) circle (1pt);

\draw (2.55,2.1258330249197694)-- (1.9,1.);
\draw (1.9,1.)-- (3.2,1.);
\draw (3.2,1.)-- (2.55,2.1258330249197694);
\draw (2.55,2.1258330249197694)-- (3.85,2.125833024919768);
\begin{scriptsize}
\draw [fill=black] (-2.,1.) circle (1.5pt);
\draw [fill=black] (-0.7,1.) circle (1.5pt);
\draw[color=black] (-1.3558677685950398,0.8) node {$e_2$};
\draw [fill=black] (-1.35,2.1258330249197703) circle (1.5pt);
\draw[color=black] (-1.736033057851238,1.725619834710745) node {$e_1$};
\draw [fill=black] (-0.05,2.1258330249197703) circle (1.5pt);
\draw[color=black] (-0.6285950413223129,2.4) node {$e_3$};
\draw [fill=black] (0.6,1.) circle (1.5pt);
\draw[color=black] (-0.08314049586776764,0.8) node {$e_4$};
\draw [fill=black] (1.25,2.12583302491977) circle (1.5pt);
\draw [fill=black] (1.9,1.) circle (1.5pt);
\draw [fill=black] (2.55,2.1258330249197694) circle (1.5pt);
\draw [fill=black] (1.25,2.1258330249197703) circle (1.5pt);
\draw[color=black] (0.7102479338842981,2.4) node {$e_5$};
\draw[color=black] (2.412727272727273,0.8) node {$e_m$};
\draw [fill=black] (3.2,1.) circle (1.5pt);
%\draw[color=black] (3.0904132231404953,1.7090909090909103) node {$J$};
\draw [fill=black] (3.85,2.125833024919768) circle (1.5pt);
\draw[color=black] (3.14,2.4) node {$e_{m+1}$};
\end{scriptsize}
\end{tikzpicture}

& $F_{2(m-1) + 2}$\\\hline 

\end{tabular}

\end{center}
\caption{Showing the number of spanning trees in $\mathscr{C}_k$ for $k \geq 2$ is the same as the number of spanning trees in $G_{k-2}$.}
\label{tab:tree}
\end{table}
\vspace{1cm}
\newpage
\section{Monotonicity, Minimal Resistance, and Link Prediction in the Straight Linear 2-tree}\label{sec:mono}
The first formula in Theorem~\ref{thm:main} gives
\begin{equation}\label{eq:mono1}
r_m(j,j+k+1)-r_m(j,j+k)=\frac{(F_{k+1}F_{2j+k-1}-F_kF_{2j+k-2})F_{2m-2j-2k+3}}{F_{2m+2}},\end{equation}
which for $k=1$ is
\[r_m(j,j+2)-r_m(j,j+1) = \frac{(F_{2j}-F_{2j-1})F_{2m-2j+1}}{F_{2m+2}}.\]
Since $j \leq m$, we have
\[r_m(j,j+2)-r_m(j,j+1) \begin{cases} =0 & \text {if $j=1$,}\\ > 0& \text{if $j > 1$.}\end{cases}\]
If $k \geq 2$ in~\eqref{eq:mono1}, $j+k+1 \leq m+2$, so $2m-2j+1 \geq 1$ and $F_{2m-2j+1} > 0.$ Then $r_m(j,j+k+1)-r_m(j,j+k) > 0.$
It follows that, 
\[
r_m(j,j+k+1)-r_m(j,j+k) : \left\{ \begin{array}{l}
=0 \text{ for } j = 1, \ k = 1\\
>0 \text{ otherwise.}
\end{array}\right.
\]
By the left-right symmetry of the linear 2-tree, 
\[
r_m(j-1,j+k-1)-r_m(j,j+k) : \left\{ \begin{array}{l}
=0 \text{ for } j = n-1, \ k = 1\\
>0 \text{ otherwise.}
\end{array}\right.
\]
This yields the following monotonicity result: 
\begin{theorem}\label{cor:monotonicity}
Let $1 \leq j' \leq j <j+k \leq j'+k' \leq n$, and let $m = n-2$.  Then 
\[
r_m(j',j'+k') - r_m(j,j+k) \geq 0
\] 
and equals 0 if and only if 
\begin{enumerate}
\item $j' = j$ and $k' = k$, 
\item $j' = j = 1$, $k = 1$, and $k' = 2$, or
\item $j' = n-2$, $j = n-1$, and $k = 1$,  $k' = 2$.
\end{enumerate}
\end{theorem}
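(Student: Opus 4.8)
The plan is to deduce the inequality from two elementary ``outward'' moves on intervals and then pin down the equality cases by tracking exactly when those moves are tight. The first move is the right-extension $r_m(j,j+k+1)-r_m(j,j+k)\ge 0$, which is already established above as a consequence of \eqref{eq:mono1} and vanishes precisely when $j=1$ and $k=1$. The second move is the left-extension $r_m(j-1,j+k)-r_m(j,j+k)\ge 0$; I would obtain it from the reflection automorphism $i\mapsto n+1-i$ of $G_n$, which sends the pair $(j-1,j+k)$ to $(a,a+k+1)$ and the pair $(j,j+k)$ to $(a,a+k)$ with $a=n-j-k+1$, so that this difference is itself an instance of \eqref{eq:mono1}; it is therefore $\ge 0$ and equals $0$ exactly when $a=1$ and $k=1$, i.e.\ when $j=n-1$ and $k=1$.

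Given $1\le j'\le j<j+k\le j'+k'\le n$, I would join the pair $(j,j+k)$ to $(j',j'+k')$ by the chain that first performs the $j-j'$ left-extensions
\[(j,j+k)\to(j-1,j+k)\to\cdots\to(j',j+k)\]
and then the $k'-(k+j-j')$ right-extensions
\[(j',j+k)\to(j',j+k+1)\to\cdots\to(j',j'+k').\]
Every index stays admissible since $j'\ge 1$ and $j'+k'\le n$, and every step is nonnegative by the two basic moves, so telescoping gives $r_m(j',j'+k')-r_m(j,j+k)\ge 0$.

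For the equality statement, observe that the sequence of resistance values along the chain is nondecreasing, so if its two endpoints agree then every single step must be an equality; it therefore suffices to analyze this one chain. A left-extension step $(l,l+w)\to(l-1,l+w)$ is tight only when $l+w=n$ and $w=1$, and along the chain the right endpoint is constantly $j+k$ while the length $w$ runs through $k,k+1,\dots$, so at most the first left-extension can be tight, and only if $j+k=n$ and $k=1$. Hence equality forces $j-j'\le 1$: if $j-j'=1$ then $j=n-1$, $k=1$, $j'=n-2$, and after that step the pair is $(n-2,n)$, which together with $j'+k'\le n$ leaves no room for any right-extension and gives case~(3); if $j-j'=0$ then $j'=j$ and the chain consists only of right-extensions $(j,j+q)\to(j,j+q+1)$, each tight only when $j=1$ and $q=1$, so at most the first is tight and only if $j=1$ and $k=1$, which means either there is no right-extension (case~(1)) or exactly one, forcing $j'=j=1$, $k=1$, $k'=2$ (case~(2)). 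Conversely, each of (1)--(3) is immediately an equality: (1) involves the same pair, (2) is the tight case of the right-extension, and (3) is the tight case of the left-extension.

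The main obstacle is precisely this last piece of equality bookkeeping: one must verify that the tightness conditions for consecutive elementary steps are mutually incompatible, so that no chain of two or more tight steps can occur, and that the left- and right-extension blocks cannot jointly contribute tight steps in any way that escapes the three listed cases (which is exactly what forces $j-j'\le 1$ and at most one right-extension). The rest is a routine telescoping argument resting on \eqref{eq:mono1} and the reflection symmetry of $G_n$.
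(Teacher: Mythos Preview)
Your proof is correct and follows essentially the same approach as the paper: deduce the inequality from the right-extension \eqref{eq:mono1} together with its reflected left-extension analogue, then telescope along a chain of elementary moves. You supply considerably more detail than the paper does---in particular the careful case analysis showing that at most one elementary step in the chain can be tight, which pins down the three equality cases---whereas the paper simply records the two one-step facts and asserts that the theorem follows; note also that the paper's displayed symmetric move contains a typo (it should read $r_m(j-1,j+k)-r_m(j,j+k)$, exactly as you use it).
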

\begin{corollary} The maximal resistance distance in a straight linear 2-tree on $n$ vertices is $r_m(1,n).$\end{corollary}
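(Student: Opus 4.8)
The plan is to obtain this as an immediate consequence of the monotonicity statement already established in Theorem~\ref{cor:monotonicity}. Given any two vertices of $G_n$, I would first dispose of the trivial case in which they coincide, where the resistance distance is $0$ and nothing is to prove, and then use the symmetry $r_m(a,b) = r_m(b,a)$ to write the two vertices as $j$ and $j+k$ with $1 \le j$, $k \ge 1$, and $j+k \le n$.

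The key step is to apply Theorem~\ref{cor:monotonicity} with the outer pair taken to be $j' = 1$ and $k' = n-1$, so that $j'$ and $j'+k'$ are exactly the extreme vertices $1$ and $n$. The hypothesis of that theorem, namely $1 \le j' \le j < j+k \le j'+k' \le n$, then collapses to $1 \le j < j+k \le n$, which holds by construction; hence the theorem yields $r_m(1,n) - r_m(j,j+k) \ge 0$. Since $j$ and $j+k$ form an arbitrary pair of distinct vertices, this shows that $r_m(1,n)$ dominates every resistance distance in $G_n$, which is the assertion.

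I do not anticipate any genuine obstacle: the content is entirely contained in Theorem~\ref{cor:monotonicity}, and the only things to verify are the bookkeeping reductions above (symmetry of resistance distance and the degenerate diagonal case). If one wants the sharper statement that the maximum is attained \emph{only} at the pair $\{1,n\}$, one can additionally inspect the equality cases (1)--(3) of Theorem~\ref{cor:monotonicity} and check that, when the outer pair is $\{1,n\}$ with $n \ge 4$, cases (2) and (3) cannot occur, so the maximizer is unique; this refinement is not needed for the corollary as stated but matches the wording used in the abstract.
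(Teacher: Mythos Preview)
Your proposal is correct and matches the paper's approach exactly: the paper states this corollary immediately after Theorem~\ref{cor:monotonicity} with no written proof, treating it as an obvious consequence of taking the outer pair to be $\{1,n\}$. Your handling of the trivial reductions and the optional uniqueness remark are fine additions but not required.
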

\begin{theorem}\label{thm:maxmin}
For a straight linear 2-tree with $n$ vertices and $m=n-2$ triangles, the function $g_{k,m}(j)=r_m(j,j+k)$ is unimodal for $j$ in $[1,n-k]$ with its maxima at the endpoints and minimum (minima) at 
\[
j=
\begin{cases}
\frac{m-k+3}2, & m-k \ {\rm odd},\\[2mm]
\frac{m-k+3+ 1}2, \frac{m-k+3- 1}2 &  m-k \ {\rm even.}
\end{cases}
\]

Moreover, it is strictly decreasing from the left endpoint to its (first) minimum, strictly increasing from its (second) minimum to its right endpoint, and $g_{k,m}(j)= g_{k,m}(n-k-j+1)$.
\end{theorem}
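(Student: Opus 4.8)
The plan is to reduce the entire statement to the behavior, as a function of $j$, of a single Fibonacci-square term in the closed form of Theorem~\ref{thm:main}. In that formula the only factor of $r_m(j,j+k)$ that depends on $j$ is $F_k^2F_{m-2j-k+3}^2$; the summand $F_{m+1}^2$, the bracket $\tfrac{F_{m+1}}{5}\big[F_{m-k}(kL_k-F_k)+F_{m-k+1}((k-5)F_{k+1}+(2k+2)F_k)\big]$, and the denominator $F_{2m+2}$ depend only on $m$ and $k$. So with $b=b(j)=m-2j-k+3$,
\[
g_{k,m}(j+1)-g_{k,m}(j)=\frac{F_k^2}{F_{2m+2}}\big(F_{b-2}^2-F_b^2\big).
\]
I would then simplify using the two-sided Fibonacci recurrence $F_b-F_{b-2}=F_{b-1}$, the identity $F_b+F_{b-2}=L_{b-1}$ (Corollary~\ref{cor:2fm1}), and $F_{2(b-1)}=L_{b-1}F_{b-1}$ (Proposition~\ref{prop:f2m}), which together give $F_b^2-F_{b-2}^2=F_{2b-2}$ and hence
\[
g_{k,m}(j+1)-g_{k,m}(j)=-\,\frac{F_k^2\,F_{2m-4j-2k+4}}{F_{2m+2}}.
\]

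Since $F_k^2>0$ and $F_{2m+2}>0$, the increment has the sign of $-F_{2(b-1)}$. Using $F_{2t}=L_tF_t$ with $L_t>0$ for every integer $t$, while $F_t>0$ for $t\ge1$, $F_0=0$, and $F_t<0$ for $t\le-1$, I obtain $F_{2(b-1)}>0\iff b\ge2$, $=0\iff b=1$, and $<0\iff b\le0$. As $b(j)=m-2j-k+3$ strictly decreases in $j$, this says: $g_{k,m}(j+1)-g_{k,m}(j)$ is strictly negative for $j\le\lfloor(m-k+1)/2\rfloor$, strictly positive for $j\ge\lceil(m-k+3)/2\rceil$, and zero exactly when $j=(m-k+2)/2$ (an integer iff $m-k$ is even). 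This is exactly unimodality, and it pins the extrema: for $m-k$ odd the value $b=1$ is never attained and the transition occurs at the single point $j=(m-k+3)/2$, the unique minimum; for $m-k$ even, $g_{k,m}$ is flat on the two consecutive integers $j=(m-k+2)/2$ and $j=(m-k+4)/2$ (where $b=1$ and $b=-1$), the two minima. A one-line inequality check confirms these $j$ lie in $[1,n-k]$ for all admissible $k$; and since $g_{k,m}$ is non-increasing up to its minimum and non-decreasing afterwards, its maximum on $[1,n-k]$ is attained at an endpoint.

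For the palindromic identity $g_{k,m}(j)=g_{k,m}(n-k-j+1)$ I would argue structurally: $i\mapsto n+1-i$ is an automorphism of $G_n$ carrying $\{j,j+k\}$ to $\{n+1-j-k,\,n+1-j\}$, and effective resistance is an isomorphism invariant, so $r_m(j,j+k)=r_m(n+1-j-k,\,n+1-j)=g_{k,m}(n-k-j+1)$ because $n+1-j-k=n-k-j+1$. (Equivalently, replacing $j$ by $n-k-j+1=m-k-j+3$ in the closed form negates the index $m-2j-k+3$, and $F_{-x}^2=F_x^2$ by Proposition~\ref{prop:fneg}.) In particular $g_{k,m}(1)=g_{k,m}(n-k)$, so both endpoints are maxima, which completes the proof.

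The step I expect to demand the most care is the parity bookkeeping around the ``flat spot'' $F_1^2=F_2^2$ of the Fibonacci squares, which must be prevented from creating a spurious plateau. This is precisely where the dichotomy $m-k$ odd versus even enters: consecutive values of $b(j)$ differ by $2$ and hence share a parity, so $|b|=1$ and $|b|=2$ never both occur among them, and the zero increment $b=1$ arises only in the even case, producing the length-two minimum recorded in the statement. Everything else is routine Fibonacci/Lucas algebra.
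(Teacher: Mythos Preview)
Your approach is essentially the paper's own: isolate $F_k^2F_{m-2j-k+3}^2$ as the only $j$-dependent piece of the closed form in Theorem~\ref{thm:main} and read off unimodality, symmetry, and the location of the minima from the shape of $b\mapsto F_b^2$; the paper compresses all of this into ``$F_{m-2j-k+3}^2$ clearly has the stated properties,'' while you supply the discrete-difference and parity details. One small slip worth fixing: it is not true that $L_t>0$ for every integer $t$ (e.g.\ $L_{-1}=-1$), nor that $F_t<0$ for all $t\le-1$ (e.g.\ $F_{-1}=1$); your sign conclusion for $F_{2(b-1)}$ is nonetheless correct and follows directly from $F_{-2s}=-F_{2s}$ (Proposition~\ref{prop:fneg}).
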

\begin{proof}Looking at the second formula in Theorem~\ref{thm:main}, we see that the only dependence on $j$ comes from the term $F_k^2 F_{m-2j-k+3}^2$.  Since $F_{m-2j-k+3}^2$ clearly has the stated properties and $F_k^2$ is constant and positive, so does $g_{k,m}$.
\end{proof}
\begin{corollary}\label{cor:minres}The minimal resistance distance in a straight linear 2-tree with $m$ triangles is  
\[
%\begin{aligned}
r_m\Big(\frac n2,\frac n2+1\Big)=\frac{F_{n-1}}{L_{n-1}}, \quad n \ {\rm even}\\[2mm]
\]
\[
r\Big(\frac{n-1}2,\frac{n+1}2\Big)=r\Big(\frac{n+1}2,\frac{n+3}2\Big)=\frac{F_{n-1}}{L_{n-1}}+\frac 1{F_{n-2}}, \quad n \ {\rm odd}.
%\end{aligned}  
\] 
\end{corollary}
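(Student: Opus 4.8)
The plan is to read the minimizing value of $j$ directly off Theorem~\ref{thm:maxmin} and then substitute it into the closed form of Theorem~\ref{thm:main} specialized to $k=1$. Almost everything needed has already been extracted in the proof of Theorem~\ref{thm:maxmin}, which isolates the only $j$-dependent part of $r_m(j,j+k)$ as $F_k^2F_{m-2j-k+3}^2$, so all that is really left is to evaluate the remaining $j$-independent expression when $k=1$ and to simplify it.

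First I would specialize to $k=1$, so that $m-k=m-1=n-3$. Theorem~\ref{thm:maxmin} then gives, for $n$ even (so $n-3$ odd), the unique minimizer $j=\tfrac{(m-1)+3}{2}=\tfrac n2$, and, for $n$ odd (so $n-3$ even), the two minimizers $j=\tfrac{(m-1)+3\pm1}{2}$, that is $j=\tfrac{n-1}{2}$ and $j=\tfrac{n+1}{2}$. In each parity these are integers in the admissible range $[1,n-1]$, and in the odd case the symmetry $g_{1,m}(j)=g_{1,m}(n-j)$ recorded in Theorem~\ref{thm:maxmin} already yields $r_m\big(\tfrac{n-1}{2},\tfrac{n+1}{2}\big)=r_m\big(\tfrac{n+1}{2},\tfrac{n+3}{2}\big)$, so it suffices to evaluate $r_m$ at one minimizer per parity.

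Next I would do the evaluation. Taking $k=1$ in the second formula of Theorem~\ref{thm:main}, the bracketed Lucas term vanishes (since $L_1=F_1=1$ and $(1-5)F_2+(2+2)F_1=0$), and $F_k^2F_{m-2j-k+3}^2$ becomes $F_{m-2j+2}^2$, leaving
\[
r_m(j,j+1)=\frac{F_{m+1}^2+F_{m-2j+2}^2}{F_{2m+2}}.
\]
For $n$ even and $j=\tfrac n2$ one has $m-2j+2=0$, so $r_m\big(\tfrac n2,\tfrac n2+1\big)=\dfrac{F_{m+1}^2}{F_{2m+2}}=\dfrac{F_{n-1}^2}{F_{2n-2}}$, and Proposition~\ref{prop:f2m} written as $F_{2n-2}=F_{2(n-1)}=L_{n-1}F_{n-1}$ reduces this to $\dfrac{F_{n-1}}{L_{n-1}}$. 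For $n$ odd and $j=\tfrac{n-1}{2}$ one has $m-2j+2=1$, so $r_m\big(\tfrac{n-1}{2},\tfrac{n+1}{2}\big)=\dfrac{F_{m+1}^2+1}{F_{2m+2}}=\dfrac{F_{n-1}^2+1}{F_{2n-2}}$, which by the same identity equals $\dfrac{F_{n-1}}{L_{n-1}}+\dfrac{1}{F_{2n-2}}$. (Alternatively one may begin from the base-case identity $r_m(j,j+1)=F_{2j-1}F_{2m-2j+3}/F_{2m+2}$ established inside the proof of Theorem~\ref{thm:main}; for $n$ odd this gives $F_{n-2}F_n/F_{2n-2}$, and Catalan's Identity, Proposition~\ref{prop:catalan}, rewrites $F_{n-2}F_n$ as $F_{n-1}^2+1$.)

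I do not expect a genuine obstacle here: after Theorem~\ref{thm:maxmin} the argument is a single substitution followed by one use of Proposition~\ref{prop:f2m}. The only points demanding care are the Fibonacci index arithmetic in the two substitutions, the verification in the first step that the minimizer handed down by Theorem~\ref{thm:maxmin} is an integer lying in $[1,n-1]$, and, if one follows the Catalan route instead, pinning down the parity so that the $\pm1$ comes out as $+1$ in the odd case; that last parity check is the one place a sign error could slip in.
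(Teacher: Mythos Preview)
Your computation is correct and mirrors the paper's proof almost line for line: specialize the closed form in Theorem~\ref{thm:main} to $k=1$, obtain $r_m(j,j+1)=\dfrac{F_{m+1}^2+F_{m-2j+2}^2}{F_{2m+2}}$, read the minimizing $j$ off Theorem~\ref{thm:maxmin}, and simplify with $F_{2n-2}=L_{n-1}F_{n-1}$. Your final value in the odd case, $\dfrac{F_{n-1}}{L_{n-1}}+\dfrac{1}{F_{2n-2}}$, is exactly what the paper's own proof produces; the $F_{n-2}$ in the displayed statement is a typo for $F_{2n-2}$.

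There is, however, one genuine gap. The corollary asks for the minimum of $r_m(i,j)$ over \emph{all} pairs of vertices, not just over $j$ for $k=1$. Theorem~\ref{thm:maxmin} only minimizes $r_m(j,j+k)$ in $j$ for a \emph{fixed} $k$, so by itself it does not tell you that the global minimum sits among the edges $\{j,j+1\}$. The paper fills this in with a one-line appeal to Theorem~\ref{cor:monotonicity}: if $k\ge 2$ then $r_m(j,j+k)>r_m(j,j+1)$ (with the two boundary exceptions $r_m(1,3)=r_m(1,2)$ and $r_m(n-2,n)=r_m(n-1,n)$, which are ties, not strict decreases). You should add exactly this sentence before ``specializing to $k=1$''; once that is in place your argument is complete and essentially identical to the paper's.
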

\begin{proof}
By Theorem~\ref{cor:monotonicity}, the minimum must occur for an edge of the form $\{j,j+1\}$.  By the second formula in Theorem~\ref{thm:main}, 
\[
r_m(j,j+1)=\frac{F_{m+1}^2+F_{m-2j+2}^2}{F_{2m+2}}.
\]

By Theorem~\ref{thm:maxmin}, the minimum occurs at $j=\frac{m+2}2$ for $m$ even and for $j=(m+2\pm1)/2$ for $m$ odd.  The respective minima are 

\[
r\Big(\frac n2,\frac n2 +1\Big)= \frac{F_{n-1}^2+F_0^2}{F_{2n-2}} =\frac{F_{n-1}^2}{F_{n-1}L_{n-1}}=\frac{F_{n-1}}{L_{n-1}}
\]

\noindent and

\[
r\Big(\frac{n-1}2,\frac{n+1}2\Big)=r\Big(\frac{n+1}2,\frac{n+3}2\Big)=\frac{F_{n-1}^2+F_1^2}{F_{2n-2}}=\frac{F_{n-1}}{L_{n-1}}+\frac 1{F_{n-2}}. 
\] 

\noindent as claimed.
\end{proof}

Not surprisingly the minimal resistance distance occurs uniquely (for $n$ even) at the most centered edge $\left\{\frac{n}{2},\frac{n}{2}+1\right\}.$  For $n$ odd it occurs at the two most centered edges.  Less obvious is that since the minimal resistance distance,
\[ \frac{F_{n-1}}{L_{n-1}}\ \  ({\rm or} \ \frac{F_{n-1}}{L_{n-1}}+\frac 1{F_{n-2}})\ \  \rightarrow \frac{1}{\sqrt{5}}\]
as $n \rightarrow \infty$, every resistance distance in the sequence of straight linear 2-trees is bounded uniformly away from zero.  
%The case in which $m$ is odd is similar, but now the minimal resistance distance is attained on two edges $\{\frac{n-1}{2},\frac{n+1}{2}\},\{\frac{n+1}{2},\frac{n+3}{2}\}$.

When resistance distance is used in link prediction, the resistance distance is calculated for each non-edge and the non-edges are ranked from lowest to highest resistance distance.  If it is desired to predict $\ell$ links, then the $\ell$ non-edges with lowest resistance distance are the ones predicted.  It is evident from Theorem~\ref{cor:monotonicity} that the non-edge with lowest resistance distance must have the form $\{j, j+3\}$.  From the second formula in Theorem~\ref{thm:main},
\[r_m(j,j+3)=\frac{F^2_{m+1}+2F_{m+1}F_{m-1}+4F_{m-2j}^2}{F_{2m+2}}\]
which for $m$ even is minimized at $j = \frac{m}{2}=\frac{n}{2}-1$ by Theorem~\ref{thm:maxmin}. Then the non-edge with smallest resistance distance is $\{\frac{n}{2}-1,\frac{n}{2}+2\}.$  Note that $\frac{n}{2}-1$ and $\frac{n}{2}+2$ are the most central non-adjacent vertices in the straight linear 2-tree.  Beginning with unit resistances on every edge this is a natural choice for the first link predicted.  (A similar conclusion holds for $m$ odd.)

In fact, the next theorem will enable us to determine the ranking given by resistance distance of \textit{all} non-edges in a straight linear 2-tree. 

\begin{theorem}\label{thm:linearity}
$r_m(j,j+k) < r_m(\ell, \ell+(k+1))$ for every $\ell$ and $j$.
\end{theorem}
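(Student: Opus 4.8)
The plan is to reduce the claim to a comparison of the largest value of the left family with the smallest value of the right family, and then to verify a short Fibonacci inequality. By Theorem~\ref{thm:maxmin}, $j\mapsto r_m(j,j+k)$ is unimodal on $[1,n-k]$ with its maximum at the endpoints, so $\max_j r_m(j,j+k)=r_m(1,1+k)$, and reading the second formula of Theorem~\ref{thm:main} at $j=1$ gives
\[
r_m(1,1+k)=\frac{F_{m+1}^2+F_k^2F_{m-k+1}^2+\tfrac{F_{m+1}}{5}\Phi(m,k)}{F_{2m+2}},
\]
where $\Phi(m,k):=F_{m-k}(kL_k-F_k)+F_{m-k+1}\bigl((k-5)F_{k+1}+(2k+2)F_k\bigr)$. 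Likewise $j\mapsto r_m(j,j+k+1)$ attains its minimum at the centre, and there the only $j$-dependent term, $F_{k+1}^2F_{m-2j-k+2}^2$, equals $F_{k+1}^2\delta^2$ where $\delta=0$ if $m-k$ is even and $\delta=1$ if $m-k$ is odd (Theorem~\ref{thm:maxmin} places the minimiser so that $m-2j-k+2\in\{0,\pm1\}$ according to the parity of $m-k$). Hence $\min_\ell r_m(\ell,\ell+k+1)=\bigl(F_{m+1}^2+F_{k+1}^2\delta^2+\tfrac{F_{m+1}}{5}\Phi(m,k+1)\bigr)/F_{2m+2}$, and since both quantities have denominator $F_{2m+2}$ and share the term $F_{m+1}^2$, it suffices to prove
\[
F_k^2F_{m-k+1}^2\;<\;F_{k+1}^2\delta^2+\tfrac{F_{m+1}}{5}\bigl(\Phi(m,k+1)-\Phi(m,k)\bigr).
\]

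The next step is to evaluate $\tfrac{F_{m+1}}{5}(\Phi(m,k+1)-\Phi(m,k))$ in closed form. The key observation is that this quantity does not depend on $j$, while subtracting the closed form of $r_m(j,j+k)$ from that of $r_m(j,j+k+1)$ and equating the difference with $F_{2m+2}$ times Equation~\eqref{eq:mono1} yields, for every $j$,
\[
\tfrac{F_{m+1}}{5}\bigl(\Phi(m,k+1)-\Phi(m,k)\bigr)=(F_{k+1}F_{2j+k-1}-F_kF_{2j+k-2})F_{2m-2j-2k+3}-F_{k+1}^2F_{m-2j-k+2}^2+F_k^2F_{m-2j-k+3}^2.
\]
Setting $j=1$ and invoking $F_{2m-2k+1}=F_{m-k}^2+F_{m-k+1}^2$ (Proposition~\ref{prop:fsumsq}), the right-hand side telescopes to $F_{k+1}^2F_{m-k+1}^2-F_k^2F_{m-k}^2$. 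Substituting this back, the inequality to be proved becomes $F_k^2(F_{m-k}^2+F_{m-k+1}^2)<F_{k+1}^2(F_{m-k+1}^2+\delta^2)$, i.e.
\[
F_k^2F_{2m-2k+1}\;<\;F_{k+1}^2\bigl(F_{m-k+1}^2+\delta^2\bigr).
\]

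Finally I would close this with elementary estimates, splitting on $m-k$. If $m=k$ (so $\delta=0$) it reduces to $F_k^2<F_{k+1}^2$. If $m-k\ge1$, then $F_{m-k}\le F_{m-k+1}$ gives $F_{2m-2k+1}=F_{m-k}^2+F_{m-k+1}^2\le 2F_{m-k+1}^2$, with equality only when $m-k=1$; and in that case $\delta=1$, so the right-hand side is $F_{k+1}^2(F_{m-k+1}^2+1)=2F_{k+1}^2$ and it again reduces to $F_k^2<F_{k+1}^2$. In all remaining cases the inequality follows from the strict bound $F_{k+1}^2>2F_k^2$, valid for all $k\ge2$ (indeed $F_{k+1}^2-2F_k^2=F_kF_{k-3}+F_{k-1}^2>0$). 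I expect the only place needing genuine care is the first paragraph: correctly extracting from Theorem~\ref{thm:maxmin} the parity-dependent minimal value $\delta$ and keeping the index shift $k\mapsto k+1$ consistent throughout; the telescoping identity for $\Phi$ and the closing estimates are routine given the catalogued Fibonacci and Lucas identities. (The hypothesis $k\ge2$ used in the last step is harmless, since the statement is applied to the ranking of non-edges, for which $k\ge3$.)
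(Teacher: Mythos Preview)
Your proof is correct and follows the same overall strategy as the paper's: reduce via Theorem~\ref{thm:maxmin} to comparing $\max_j r_m(j,j+k)=r_m(1,1+k)$ with $\min_\ell r_m(\ell,\ell+k+1)$, then verify the resulting Fibonacci inequality. The execution differs in two places. First, to evaluate $\tfrac{F_{m+1}}{5}(\Phi(m,k+1)-\Phi(m,k))$ the paper proves a dedicated identity (Lemma~\ref{amanda}), obtaining $F_{m+1}(F_{m-k+1}F_{k+1}-F_kF_{m-k})$ by direct manipulation of Lucas and Fibonacci terms; you instead extract it from the already-established increment formula~\eqref{eq:mono1} specialised at $j=1$, arriving at the equivalent (by Proposition~\ref{prop:splitsum}) form $F_{k+1}^2F_{m-k+1}^2-F_k^2F_{m-k}^2$. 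Your route is slicker and bypasses Lemma~\ref{amanda} entirely. Second, the closing positivity arguments are organised differently: the paper factors the difference down to $[F_kF_{k-1}+(-1)^k]F_{m-k+1}^2+F_k^2F_{m-k+2}F_{m-k-1}$ and reads off positivity, while you reduce to $F_k^2F_{2m-2k+1}<F_{k+1}^2(F_{m-k+1}^2+\delta^2)$ and close with a short parity case-split using $F_{k+1}^2>2F_k^2$. Both arguments need $k\ge2$; as you note, and as the paper's own proof also tacitly assumes, this suffices for the intended application to non-edges (indeed the strict inequality fails at $k=1$ because $r_m(1,2)=r_m(1,3)$).
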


Before giving the proof of this theorem we state and prove the following.
\begin{lemma}\label{amanda}
\begin{multline}\label{eq:lhs1}
\frac{F_{m+1}}{5}
\left[
F_{m-k-1}((k+1)L_{k+1} - F_{k+1}) + F_{m-k}((k-4)F_{k+2} + (2k+4)F_{k+1})\right.\\
\hspace{5cm}\left.-F_{m-k}(kL_k-F_k) - F_{m-k+1}((k-5)F_{k+1} + (2k+2)F_k)
\right]\\ = F_{m+1}(F_{m-k+1}F_{k+1}-F_{k}F_{m-k}).
\end{multline}
\end{lemma}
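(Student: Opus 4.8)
The plan is to cancel the common factor $F_{m+1}/5$ from both sides — legitimate since $F_{m+1}\neq 0$ in the relevant range, or one may simply read the claim as an identity among Fibonacci and Lucas numbers — so that it suffices to show the bracketed expression on the left equals $5\bigl(F_{m-k+1}F_{k+1}-F_kF_{m-k}\bigr)$. I would then reduce both sides to a common ``basis'' in the two large-index Fibonacci numbers $F_{m-k-1}$ and $F_{m-k}$ by the single substitution $F_{m-k+1}=F_{m-k}+F_{m-k-1}$, after which each side has the form $\alpha F_{m-k-1}+\beta F_{m-k}$ with $\alpha,\beta$ depending only on $F_{k-1},F_k,F_{k+1},F_{k+2},L_k,L_{k+1}$ and linear-in-$k$ coefficients. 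On the right, this rewriting gives $5F_{k+1}F_{m-k-1}+5(F_{k+1}-F_k)F_{m-k}=5F_{k+1}F_{m-k-1}+5F_{k-1}F_{m-k}$, so the targets are $\alpha=5F_{k+1}$ and $\beta=5F_{k-1}$.

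The computational core is to verify these two coefficient identities on the left-hand bracket. For the coefficient of $F_{m-k-1}$ one collects $(k+1)L_{k+1}-F_{k+1}-(k-5)F_{k+1}-(2k+2)F_k$; substituting $L_{k+1}=2F_k+F_{k+1}$ (Proposition~\ref{prop:lm1}) makes the $F_k$ terms cancel and the $F_{k+1}$ terms collapse to $5F_{k+1}$. For the coefficient of $F_{m-k}$ one collects $(k-4)F_{k+2}+(2k+4)F_{k+1}-kL_k+F_k-(k-5)F_{k+1}-(2k+2)F_k$; using $F_{k+2}=F_{k+1}+F_k$, $L_k=F_{k+1}+F_{k-1}$ (Corollary~\ref{cor:2fm1}), and $F_{k+1}-F_k=F_{k-1}$, this simplifies to $(k+5)F_{k-1}-kF_{k-1}=5F_{k-1}$. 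Matching both coefficients with the targets above completes the proof.

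The main obstacle is purely bookkeeping: the bracket is a sum of four terms, each carrying a linear-in-$k$ multiplier, and the only real risk is an arithmetic slip while regrouping the $F_{k+1}$, $F_k$, and $F_{k-1}$ contributions. Once one commits to expanding everything in the bases $\{F_{k-1},F_k,F_{k+1}\}$ and $\{F_{m-k-1},F_{m-k}\}$ there is no genuine difficulty — every step uses only the Fibonacci recurrence and the two elementary Lucas identities cited above.
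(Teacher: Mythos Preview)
Your proposal is correct and follows essentially the same route as the paper's proof: both reduce the bracket to the basis $\{F_{m-k-1},F_{m-k}\}$ via $F_{m-k+1}=F_{m-k}+F_{m-k-1}$, eliminate $L_{k+1}$ and $L_k$ using the identities in Proposition~\ref{prop:lm1} and Corollary~\ref{cor:2fm1}, and arrive at the intermediate form $5F_{k+1}F_{m-k-1}+5F_{k-1}F_{m-k}$. The only difference is organizational---you split into two independent coefficient checks while the paper simplifies the full expression step by step---but the underlying computation is the same.
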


\begin{proof}For ease of notation, define
\begin{multline*}LH(m,j,k)= \frac{F_{m+1}}{5}
\left[
F_{m-k-1}((k+1)L_{k+1} - F_{k+1}) + F_{m-k}((k-4)F_{k+2} + (2k+4)F_{k+1})\right.\\
\hspace{5cm}\left.-F_{m-k}(kL_k-F_k) - F_{m-k+1}((k-5)F_{k+1} + (2k+2)F_k)
\right].\end{multline*}
We expand $L_k = F_{k+1} + F_{k-1}$ to obtain
\[\begin{array}{c}
LH(m,j,k) = \frac{F_{m+1}}{5}
\left[
F_{m-k-1}((k+1)F_{k+2}+(k+1)F_{k} - F_{k+1}) - F_{m-k+1}((k-5)F_{k+1} + (2k+2)F_k)\right.\\[2mm]
\left.+F_{m-k}((k-4)F_{k+2} + (k+4)F_{k+1}-kF_{k-1}+F_k) 
\right]
\end{array}\]
Expanding the $F_{k+2}$ terms, this becomes

\[\begin{array}{c}
\frac{F_{m+1}}{5}
\left[
F_{m-k-1}(kF_{k+1}+(2k+2)F_{k}) - F_{m-k+1}((k-5)F_{k+1} + (2k+2)F_k)\right.\\[2mm]
\left.+F_{m-k}(2kF_{k+1} +(k-3)F_{k}  -kF_{k-1}) 
\right]
\end{array}\]
Next, $F_{m-k+1} = F_{m-k} + F_{m-k-1}$ gives
\[\begin{array}{c}
\frac{F_{m+1}}{5}
\left[
F_{m-k-1}(5F_{k+1})+F_{m-k}((k+5)F_{k+1} -(k+5)F_{k}  -kF_{k-1}) 
\right]\\[2mm]
=\frac{F_{m+1}}{5}
\left[
F_{m-k-1}(5F_{k+1})+F_{m-k}(5F_{k-1}  ) 
\right]
\end{array}\]
Finally, we note that $F_{k-1} = F_{k+1} - F_k$ to obtain \eqref{eq:lhs1}.

\end{proof}
\begin{proof} (of Theorem~\ref{thm:linearity})
By Theorem~\ref{thm:maxmin}, $r_m(1,1+k) > r_m(j,j+k)$ for all $j$ with $1<j<n-k$.  We first consider the case when $m-(k+1)$ is odd.  In this case the minimum of $r_m(\ell, \ell+(k+1))$ occurs at $\ell =(m-(k+1)+3)/2= (n-(k+1)+1)/2$ , so it suffices to show that 
\[r_m(1,1+k) < r_m\left(\frac{n-k}{2}, \frac{n-k}{2}+k+1\right).\]
Using Theorem~\ref{thm:main} and Lemma~\ref{amanda}
\[\begin{aligned}
&r_m((n-k)/2, (n-k)/2+(k+1))-r_m(1,1+k) \\[2mm]
&=F_{m+1}(F_{k+1}F_{m-k+1} - F_kF_{m-k})-F_k^2F_{m-k+1}^2 \\[2mm]
&=(F_{k+1}F_{m-k+1} + F_kF_{m-k})(F_{k+1}F_{m-k+1} - F_kF_{m-k})-F_k^2F_{m-k+1}^2 \\[2mm]
&=F_{k+1}^2F_{m-k+1}^2-F_k^2F_{m-k}^2-F_k^2F_{m-k+1}^2 \\[2mm]
&=(F_{k+1}^2-F_k^2)F_{m-k+1}^2-F_k^2F_{m-k}^2 \\[2mm]
&=(F_{k+1}+F_k)(F_{k+1}-F_k)F_{m-k+1}^2-F_k^2F_{m-k}^2 \\[2mm]
&=F_{k+2}F_{k-1}F_{m-k+1}^2-F_k^2F_{m-k}^2 \\[2mm]
&=(F_{k+2}-F_{k+1})F_{k-1}F_{m-k+1}^2+F_{k+1}F_{k-1}F_{m-k+1}^2-F_k^2F_{m-k}^2 \\[2mm]
&=F_kF_{k-1}F_{m-k+1}^2+[F_k^2+(-1)^k]F_{m-k+1}^2-F_k^2F_{m-k}^2 \\[2mm]
&=[F_kF_{k-1}+(-1)^k]F_{m-k+1}^2+F_k^2(F_{m-k+1}^2-F_{m-k}^2)\\[2mm]
&=[F_kF_{k-1}+(-1)^k]F_{m-k+1}^2+F_k^2F_{m-k+2}F_{m-k-1}.
\end{aligned}\]

The first term is greater than $0$ for all $k \ge 2$ and the second term is greater than or equal to $0$ for all $k \ge 2$ and equals 0 if and only if $m=k+1$.
A similar argument can be used to show that $r_m(1,1+k) < r_m(\ell,\ell+k)$ for all $\ell$ in the case that $m-(k+1)$ is even.
\end{proof}
Using Theorem~\ref{thm:maxmin} and Theorem~\ref{thm:linearity} together the non-edges of a straight linear 2-tree can easily be put in order according to resistance distance.  One begins with the list of non-edges with nodes that are three apart selecting first the center-most non-edge(s) and moving toward the ends in both directions. Next one considers the list of non-edges with nodes four apart selecting again the center-most non-edge and moving towards the ends in both directions.  One continues in this manner until selecting the last non-edge $\{1,n\}$.

For example, given the straight linear 2-tree on 9 vertices, the non-edges ranked from smallest to greatest resistance distance, with an \& sign if they are tied, are: 
 
\[\{3,6\} \& \{4,7\}, \{2,5\} \& \{5,8\}, \{1,4\} \& \{6,9\}, \{3,7\}, \{2,6\} \& \{4,8\}, \{1,5\} \& \{5,9\}, \ldots, \{1,8\} \& \{2,9\}, \{1,9\}.\]
%\{2,7\} \& \{3,8\}, \{1,6\} \& \{3,9\}, \{2,8\}, \{1,7\} \& \{3,9\}, 
In order to predict $\ell$ links simply take the first $\ell$ non-edges on the list, breaking a tie arbitrarily if necessary. 

Other than trees and cycles, these linear 2-trees are the only arbitrarily large graphs we know of for which all non-edges can be totally ordered by resistance distance.  Of course, this infinite family we have considered is highly structured.  It would be interesting to find regularity in the ranked predicted links for less structured graphs.  Nevertheless, it is now evident that  the extremely regular behavior of resistance distance in straight linear 2-trees shares nothing in common with that of the random geometric graphs in~\cite{lostinspace}.  We believe many other graph classes exhibit this kind of regularity.
\section{Conclusions and conjectures}
Theorem~\ref{cor:onefifth} implies that if $G$ is a straight linear 2-tree with $n$ vertices, then $r_G(1,n)\rightarrow \infty$  as $n\rightarrow \infty$.  Rayleigh's monotonicity Law (see for example~\cite[Lemma D]{KleinRandic}) states that if $H$ is an electrical circuit with a resistance on each edge and we create a new circuit $K$ from $H$ by lowering the resistance on an existing edge or inserting a new edge, then $r_K(i,j) \leq r_H(i,j)$ for
 all vertices $i$ and $j$ of $H$. This fact coupled with Theorem~\ref{cor:onefifth} imply for any connected subgraph of this straight linear 2-tree containing vertices $1$ and $n$, that $r(1,n)\rightarrow \infty$ also as $n\rightarrow\infty$.   This is the opposite behavior from~\cite{lostinspace} which would say (by Equation~\ref{eqn_LIS}) that $r(1,n)\rightarrow 1$.

\begin{definition}
A straight linear $k$-tree is a graph $G_n$ with $n$ vertices with adjacency matrix that is symmetric, banded, with the first through $k$th subdiagonals equal to one and first through $k$th superdiagonals equal to one, and all other entries equal to zero.  In other words $G_n$ is the graph whose (0,1)-adjacency matrix is defined by 
\[
a_{ij}=\begin{cases}1 &\text{if $0 < |i-j| \leq k$}\\0 & \text{otherwise.}\end{cases}\]
\end{definition}
Empirical evidence has allowed us to make the following conjecture:
\begin{conjecture}
	Let $G$ be the straight linear k-tree, $k \geq 1$, with $n$ vertices and $H$ be the straight linear k-tree with $n+1$ vertices.%given by $H=(\{1, \ldots, n+1\}, E(G) \cup (n-1,n+1) \cup (n+1,n)).$  
	Then 
	\[\lim_{n\rightarrow \infty} r_{H} (1, n+1) - r_G(1,n) = \frac{6}{k(k+1)(2k+1)}.\]
	\end{conjecture}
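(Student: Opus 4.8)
The plan is to identify the limit as the \emph{bulk per-cell voltage drop} of the bi-infinite straight linear $k$-tree and then to prove that $r_{G_n}(1,n)$ differs from an affine function of $n$ by a quantity that stabilizes as $n\to\infty$. The starting observation is arithmetic: $\tfrac{6}{k(k+1)(2k+1)} = 1/\sum_{\ell=1}^{k}\ell^2$, a value that already agrees with the path ($k=1$, where $r_{G_n}(1,n)=n-1$ and the difference is $1$) and with Theorem~\ref{cor:onefifth} ($k=2$, where the difference is $\tfrac15$). So the target is $\lim_{n\to\infty}\bigl[r_{G_{n+1}}(1,n+1)-r_{G_n}(1,n)\bigr] = 1/\sum_{\ell=1}^{k}\ell^2 =: \delta$.

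The clean part of the argument is the computation of $\delta$. On the bi-infinite graph with unit conductances consider the affine potential $v_p = -\delta p$. It is harmonic at every vertex, since the edges joining $p$ to $p-\ell$ and to $p+\ell$ (for $1\le\ell\le k$) contribute $(v_p-v_{p-\ell})+(v_p-v_{p+\ell}) = -\delta\ell+\delta\ell = 0$. The current it induces across the cut separating $\{\dots,p\}$ from $\{p+1,\dots\}$ equals the sum of the drops over the crossing edges; those edges are exactly the $\binom{k+1}{2}$ edges $\{p-a+1,\,p+b\}$ with $a,b\ge 1$ and $a+b\le k+1$, and the edge of this form has length $a+b-1$, hence carries current $\delta(a+b-1)$. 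Grouping crossing edges by length $\ell\in\{1,\dots,k\}$, there are exactly $\ell$ of each length, so the total crossing current is $\delta\sum_{\ell=1}^{k}\ell^2$. Normalizing it to one unit forces $\delta = 1/\sum_{\ell=1}^k\ell^2 = \tfrac{6}{k(k+1)(2k+1)}$.

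The obstacle is showing that the finite graph's behavior is governed by this bulk solution. Following the $\Delta$--Y / Kron-reduction philosophy of the earlier sections, one eliminates the interior vertices of $G_n$ from the left and from the right, keeping $1$, $n$, and a middle window; the reduced network should decompose as a fixed ``left boundary layer,'' a translation-invariant chain of $\Theta(n)$ cells, and a fixed ``right boundary layer,'' so that $r_{G_n}(1,n)$ is the sum of three resistances, the middle one affine in $n$ with slope $\delta$ up to a geometrically decaying error. The key technical input is that the sequence of $k\times k$ Schur complements produced by successively eliminating vertices $1,2,3,\dots$ from the semi-infinite $k$-tree converges geometrically to the unique fixed point of the ``eliminate-one-vertex-and-shift'' map; I would obtain this from a contraction estimate for that map on the cone of boundary Laplacians (for instance in the Thompson metric), or equivalently from a Perron--Frobenius argument showing the associated transfer matrix has a simple, strictly dominant eigenvalue $\lambda$. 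Granting this convergence, passing from $G_n$ to $G_{n+1}$ lengthens the middle chain by exactly one cell while perturbing the two boundary layers by $O(\lambda^{-cn})$, whence $r_{G_{n+1}}(1,n+1)-r_{G_n}(1,n)\to\delta$.

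An alternative, more combinatorial route runs through Theorem~\ref{thm:spanningtrees}: establish via a forest-transfer-matrix argument that $D_n:=\tau(G_n)$ obeys a linear recurrence with a simple dominant root $\lambda$, and that $N_n$, the number of spanning $2$-forests of $G_n$ separating $1$ and $n$, obeys a recurrence in which $\lambda$ occurs with multiplicity two; then $N_n=(An+B)\lambda^n+o(\lambda^n)$ and $D_n=C\lambda^n+o(\lambda^n)$, so $r_{G_n}(1,n)=N_n/D_n=(A/C)n+O(1)$ and the limit equals $A/C$. This trades the metric-geometry input for heavier bookkeeping of forest-counting transfer matrices and an explicit evaluation of $A/C$, which for $k\ge 3$ looks more painful than the electrical argument; for the write-up I would use the electrical argument and keep the combinatorial picture only as a small-$k$ check. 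In either approach the hard part is the uniform control of the end effects — proving that the boundary-layer contributions converge and therefore cancel in the difference — whereas the identification of the constant is pinned down immediately by the cut-and-Ohm's-law computation above.
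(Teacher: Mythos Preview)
The paper does not prove this statement: it is explicitly labeled a \emph{Conjecture} and is introduced only by the sentence ``Empirical evidence has allowed us to make the following conjecture.'' There is therefore nothing in the paper to compare your argument against.

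On the merits: your identification of the constant is correct and is the genuinely new content of your proposal. The rewriting $\tfrac{6}{k(k+1)(2k+1)}=1/\sum_{\ell=1}^k \ell^2$ together with the cut computation on the bi-infinite $k$-tree is clean and convincing as a heuristic for \emph{why} the limit should be this value, and it recovers the known cases $k=1$ (paths) and $k=2$ (Theorem~\ref{cor:onefifth}) on the nose.

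However, what you have written is a programme, not a proof. You yourself flag the gap: ``Granting this convergence\ldots'' and ``the hard part is the uniform control of the end effects.'' Neither the contraction estimate on the cone of boundary Laplacians nor the Perron--Frobenius/transfer-matrix statement is actually established; you only assert that one \emph{would} obtain them. For $k=2$ the paper carries this out by explicit Fibonacci/Lucas identities (Lemma~\ref{lem3}, Theorem~\ref{thm:effRes}), which do not generalize in any obvious closed form to $k\ge 3$. Until the geometric convergence of the Schur complements (or equivalently the simple dominance of the transfer-matrix eigenvalue) is proved for general $k$, the argument remains a plausible outline. If you can supply that step rigorously, you would be proving what the paper only conjectures.
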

	
We have also done work on the broader class of linear 2-trees~\cite{bef2}.
We recall that a $k$-tree is constructed inductively by starting with a complete graph on $k + 1$ vertices and connecting each new vertex to the vertices of an existing clique on $k$ vertices. 
For example, the five 2-trees on 6 vertices are given in Figure \ref{fig:0}. 

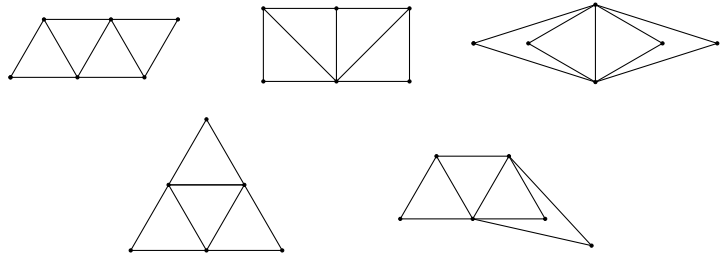
\begin{figure}[ht]
\begin{center}
\begin{tikzpicture}[scale = .3, line cap=round,line join=round,>=triangle 45,x=1.0cm,y=1.0cm]
\draw (-1.3569135281606972,0.6673644767836225)-- (0.1270096766510902,3.2375948620480752);
\draw (0.1270096766510902,3.2375948620480752)-- (1.6109328814628765,0.6673644767836225);
\draw (1.6109328814628765,0.6673644767836225)-- (3.094856086274665,3.237594862048074);
\draw (3.094856086274665,3.237594862048074)-- (4.578779291086451,0.6673644767836205);
\draw (4.578779291086451,0.6673644767836205)-- (6.06270249589824,3.2375948620480717);
\draw (6.06270249589824,3.2375948620480717)-- (3.094856086274665,3.237594862048074);
\draw (3.094856086274665,3.237594862048074)-- (0.1270096766510902,3.2375948620480752);
\draw (-1.3569135281606972,0.6673644767836225)-- (1.6109328814628765,0.6673644767836225);
\draw (1.6109328814628765,0.6673644767836225)-- (4.578779291086451,0.6673644767836205);
\draw (9.84889663182346,3.7251335656213667)-- (9.84889663182346,0.4872241579558654);
\draw (9.84889663182346,0.4872241579558654)-- (13.08680603948896,0.4872241579558653);
\draw (13.08680603948896,0.4872241579558653)-- (13.08680603948896,3.725133565621366);
\draw (13.08680603948896,3.725133565621366)-- (9.84889663182346,3.7251335656213667);
\draw (13.08680603948896,3.725133565621366)-- (16.32471544715446,3.725133565621366);
\draw (16.32471544715446,3.725133565621366)-- (16.32471544715446,0.4872241579558654);
\draw (16.32471544715446,0.4872241579558654)-- (13.08680603948896,0.4872241579558653);
\draw (9.84889663182346,3.7251335656213667)-- (13.08680603948896,0.4872241579558653);
\draw (13.08680603948896,0.4872241579558653)-- (16.32471544715446,3.725133565621366);
\draw (24.563559718763077,3.891443382031552)-- (24.563559718763077,0.458111183044603);
\draw (24.563559718763077,3.891443382031552)-- (21.59020681480929,2.174777282538077);
\draw (21.59020681480929,2.174777282538077)-- (24.563559718763077,0.458111183044603);
\draw (24.563559718763077,0.458111183044603)-- (27.536912622716866,2.174777282538078);
\draw (27.536912622716866,2.174777282538078)-- (24.563559718763077,3.891443382031552);
\draw (24.563559718763077,3.891443382031552)-- (19.163864533083597,2.1747772825380767);
\draw (19.163864533083597,2.1747772825380767)-- (24.563559718763077,0.458111183044603);
\draw (24.563559718763077,3.891443382031552)-- (29.963254904442575,2.174777282538078);
\draw (29.963254904442575,2.174777282538078)-- (24.563559718763077,0.458111183044603);
\draw (9.006222295606864,-4.100651066546037)-- (5.649150127344132,-4.100651066546037);
\draw (9.006222295606864,-4.100651066546037)-- (5.649150127344132,-4.100651066546037);
\draw (7.327686211475499,-1.193341286492803)-- (9.006222295606864,-4.100651066546037);
\draw (7.327686211475499,-1.193341286492803)-- (5.649150127344132,-4.100651066546037);
\draw (5.649150127344132,-4.100651066546037)-- (3.9706140432127657,-7.0079608465992695);
\draw (3.9706140432127657,-7.0079608465992695)-- (7.3276862114754975,-7.007960846599271);
\draw (7.3276862114754975,-7.007960846599271)-- (5.649150127344132,-4.100651066546037);
\draw (7.3276862114754975,-7.007960846599271)-- (9.006222295606864,-4.100651066546037);
\draw (9.006222295606864,-4.100651066546037)-- (10.68475837973823,-7.007960846599272);
\draw (10.68475837973823,-7.007960846599272)-- (7.3276862114754975,-7.007960846599271);
\draw (20.731648274611473,-2.8279958742653273)-- (24.392377272176738,-6.795455417778197);
\draw (24.392377272176738,-6.795455417778197)-- (19.12609202022495,-5.608900881272767);
\draw (17.520535765838424,-2.827995874265328)-- (15.914979511451902,-5.608900881272767);
\draw (15.914979511451902,-5.608900881272767)-- (19.12609202022495,-5.608900881272767);
\draw (19.12609202022495,-5.608900881272767)-- (17.520535765838424,-2.827995874265328);
\draw (17.520535765838424,-2.827995874265328)-- (20.731648274611473,-2.8279958742653273);
\draw (20.731648274611473,-2.8279958742653273)-- (19.12609202022495,-5.608900881272767);
\draw (19.12609202022495,-5.608900881272767)-- (22.33720452899799,-5.608900881272771);
\draw (22.33720452899799,-5.608900881272771)-- (20.731648274611473,-2.8279958742653273);
\begin{scriptsize}
\draw [fill=black] (-1.3569135281606972,0.6673644767836225) circle (2.0pt);
\draw [fill=black] (1.6109328814628765,0.6673644767836225) circle (2.0pt);
\draw [fill=black] (0.1270096766510902,3.2375948620480752) circle (2.0pt);
\draw [fill=black] (-1.3569135281606983,0.6673644767836237) circle (2.0pt);
\draw [fill=black] (3.094856086274665,3.237594862048074) circle (2.0pt);
\draw [fill=black] (4.578779291086451,0.6673644767836205) circle (2.0pt);
\draw [fill=black] (6.06270249589824,3.2375948620480717) circle (2.0pt);
\draw [fill=black] (13.08680603948896,0.4872241579558653) circle (2.0pt);
\draw [fill=black] (13.08680603948896,3.725133565621366) circle (2.0pt);
\draw [fill=black] (9.84889663182346,3.7251335656213667) circle (2.0pt);
\draw [fill=black] (9.84889663182346,0.4872241579558654) circle (2.0pt);
\draw [fill=black] (16.32471544715446,0.4872241579558654) circle (2.0pt);
\draw [fill=black] (16.32471544715446,3.725133565621366) circle (2.0pt);
\draw [fill=black] (24.563559718763077,3.891443382031552) circle (2.0pt);
\draw [fill=black] (24.563559718763077,0.458111183044603) circle (2.0pt);
\draw [fill=black] (27.536912622716866,2.174777282538078) circle (2.0pt);
\draw [fill=black] (21.59020681480929,2.174777282538077) circle (2.0pt);
\draw [fill=black] (19.163864533083597,2.1747772825380767) circle (2.0pt);
\draw [fill=black] (19.163864533083597,2.1747772825380767) circle (2.0pt);
\draw [fill=black] (29.963254904442575,2.174777282538078) circle (2.0pt);
\draw [fill=black] (9.006222295606864,-4.100651066546037) circle (2.0pt);
\draw [fill=black] (5.649150127344132,-4.100651066546037) circle (2.0pt);
\draw [fill=black] (7.327686211475499,-1.193341286492803) circle (2.0pt);
\draw [fill=black] (7.3276862114754975,-7.007960846599271) circle (2.0pt);
\draw [fill=black] (3.9706140432127657,-7.0079608465992695) circle (2.0pt);
\draw [fill=black] (10.68475837973823,-7.007960846599272) circle (2.0pt);
\draw [fill=black] (15.914979511451902,-5.608900881272767) circle (2.0pt);
\draw [fill=black] (19.12609202022495,-5.608900881272767) circle (2.0pt);
\draw [fill=black] (17.520535765838424,-2.827995874265328) circle (2.0pt);
\draw [fill=black] (20.731648274611473,-2.8279958742653273) circle (2.0pt);
\draw [fill=black] (22.33720452899799,-5.608900881272771) circle (2.0pt);
\draw [fill=black] (24.392377272176738,-6.795455417778197) circle (2.0pt);
\end{scriptsize}
\end{tikzpicture}
\caption{All the 2-trees on 6 vertices} \label{fig:0}
\end{center}
\end{figure}

\begin{definition}[linear 2-tree]\label{def:lin2tree}
A linear 2-tree is a graph $G$ that is constructed inductively by starting with a triangle and connecting each new vertex to the vertices of an existing edge that includes a vertex of degree 2.  In other words, a linear 2-tree is a 2-tree with exactly two vertices of degree 2. \end{definition}

So just two of the graphs in Figure \ref{fig:0} are linear 2-trees.  It is straightforward to modify the argument in Section \ref{sec:spanningtrees} to obtain the following theorem. 

\begin{theorem}
The number of spanning trees in a linear 2-tree with $m$ triangles is $F_{2m+2}$. 
\end{theorem}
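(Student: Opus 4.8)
The plan is to prove, by strong induction on $m$, the stronger statement that \emph{every} linear 2-tree with $m$ triangles has exactly $F_{2m+2}$ spanning trees; the straight linear 2-tree is then the special case already handled in Theorem~\ref{thm:spanningtrees}, and the general induction is carried out in the same spirit of partitioning $\mathscr{S}_m$, but organized recursively at a degree-2 vertex rather than through the boundary cycle. For the base cases, a single triangle ($m=1$) has $3=F_4$ spanning trees, and it is convenient to also allow the degenerate ``linear 2-tree with $0$ triangles'', namely the single edge $K_2$, which has $1=F_2$ spanning tree.

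For the inductive step, fix a linear 2-tree $G$ with $m\ge 2$ triangles and let $v$ be one of its two degree-2 vertices. Since $\deg v = 2$, the neighbourhood of $v$ is a single edge $\{a,b\}$ and $v$ lies in exactly one triangle $\{v,a,b\}$, which (by the inductive construction of linear 2-trees, no edge of which lies in three triangles) must be an end triangle, so that $\{a,b\}$ is the edge it shares with the next triangle. Relabel $a,b$ if necessary so that $a$ is the vertex whose degree drops to $2$ in $G-v$ (for $m=2$ either choice works). Now partition the spanning trees $T$ of $G$ by which of $\{v,a\},\{v,b\},\{a,b\}$ lie in $T$: since $\{v,a,b\}$ is a cycle, not all three can occur, and since $T$ must reach $v$, at least one of $\{v,a\},\{v,b\}$ occurs. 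This yields exactly three classes: (A) $\{v,a\}\in T,\ \{v,b\}\notin T$; (B) $\{v,b\}\in T,\ \{v,a\}\notin T$; (C) $\{v,a\},\{v,b\}\in T$ (forcing $\{a,b\}\notin T$). In classes (A) and (B) the vertex $v$ is a leaf of $T$, so deleting it gives a bijection with the spanning trees of $G-v$; each of these classes therefore has $\tau(G-v)$ members. In class (C), removing $v$ from $T$ leaves a spanning $2$-forest of $G-v$ separating $a$ and $b$, and restoring the edge $\{a,b\}$ sets up a bijection with the spanning trees of $G-v$ that use $\{a,b\}$; their number is $\tau(G-v)-\tau\bigl((G-v)\setminus\{a,b\}\bigr)$, and since deleting $\{a,b\}$ from $G-v$ makes $a$ pendant, this equals $\tau(G-v)-\tau(G-v-a)$. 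Summing the three classes gives $\tau(G)=3\,\tau(G-v)-\tau(G-v-a)$.

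Deleting the apex of an end triangle reverses one step of the construction, so $G-v$ is a linear 2-tree with $m-1$ triangles, and deleting its new degree-2 vertex $a$ gives a linear 2-tree with $m-2$ triangles (interpreting the $m-2=0$ case as $K_2$). By the induction hypothesis $\tau(G-v)=F_{2m}$ and $\tau(G-v-a)=F_{2m-2}$, whence $\tau(G)=3F_{2m}-F_{2m-2}$. Finally, $3F_{2m}-F_{2m-2}=2F_{2m}+(F_{2m}-F_{2m-2})=2F_{2m}+F_{2m-1}=F_{2m}+F_{2m+1}=F_{2m+2}$, completing the induction. The routine parts are the base-case verification and the bijections in classes (A), (B), (C); the only point requiring care is the structural bookkeeping, namely confirming that $v$ is the apex of an end triangle, choosing $a$ to be the neighbour of $v$ that becomes a degree-2 vertex of $G-v$ (so that $G-v-a$ is again linear), and checking the small case $m=2$, where $G-v$ is a triangle and $G-v-a$ is a single edge. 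This is the main obstacle only in the sense of being easy to get subtly wrong; there is no genuine difficulty.
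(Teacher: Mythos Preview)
Your proof is correct, and it takes a genuinely different route from the paper. The paper's detailed argument (Theorem~\ref{thm:spanningtrees}) is only for the straight case: it partitions $\mathscr{S}_m$ according to the first exterior edge $e_k$ that a spanning tree omits, shows each class $\mathscr{C}_k$ has size $F_{2(k-2)+2}$ (for $k\ge 2$), and then invokes the identity $1+F_2+F_4+\cdots+F_{2m-2}+2F_{2m}=F_{2m+2}$. For the general linear 2-tree the paper merely asserts that this argument can be ``straightforwardly modified,'' without spelling out how the exterior-cycle partition adapts when the 2-tree is bent.

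Your approach sidesteps the exterior cycle entirely. By deleting a degree-$2$ vertex $v$ and analysing the three possible intersections of a spanning tree with the end triangle $\{v,a,b\}$, you obtain the clean two-term recurrence $\tau(G)=3\,\tau(G-v)-\tau(G-v-a)$, which is exactly the well-known recurrence $F_{2m+2}=3F_{2m}-F_{2m-2}$ for even-indexed Fibonacci numbers. This argument is shorter, uses only local structure at an end triangle, and applies verbatim to every linear 2-tree, not just the straight one. The only places to be careful are exactly the ones you flag: that one neighbour of $v$ has degree $3$ (so that $G-v-a$ is again a linear 2-tree), and the small cases $m\le 2$. Both are handled correctly. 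In short, your proof actually fills in what the paper leaves as an exercise, and does so with a more transparent mechanism.
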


%Note that a 2-tree on $n$ vertices has $m = n-2$ triangles.  

\begin{definition}[bent linear 2-tree]\label{def:lin2treestb}
A bent linear 2-tree is a graph with $n$ vertices such that for some $k \in \{3,4, \ldots, n-3\}$, its adjacency matrix is
\[a_{ij} = \begin{cases} 1 & \text{if $j = i+1$}\\
1 & \text{if $j = i-1$}\\
1 & \text{if $j = i+2$ and $ i \neq k+1$}\\
1 & \text{if $j = i-2$ and $i \neq k+3$}\\
1 & \text{if $j = i+3$ and $i = k$}\\
1 & \text{if $j = i-3$ and $i = k+3$}\\
0 & \text{otherwise.}\end{cases}\]
 See Figure~\ref{fig:bent}.
\end{definition}
We have the following result for a linear 2-tree that is straight except for the existence of a bend at vertex $k$ (see Figure~\ref{fig:bent}).
\begin{theorem}
Given a bent linear 2-tree with $n$ vertices, $m = n-2$ cells, and one bend located at vertex $k$, the resistance distance between node 1 and node $n$ is
	 \begin{equation}\label{eq:univconf}r_{m,k}(1,n)=\frac{m+1}{5}+\frac{4F_{m+1}}{5L_{m+1}}\frac{\sum_{j=3}^{k}\left[(-1)^jF_{m-2j+3}(F_{m+2}+F_{j-2}F_{m-j+1})\right]}{F_{2m+2}}.\end{equation}
	 \end{theorem}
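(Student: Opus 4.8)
The strategy is to extend the $\Delta$--Y peeling argument of Theorem~\ref{thm:effRes}, isolating the bend inside a bounded ``core''. First I would run $k-1$ $\Delta$--Y transformations from vertex $1$. Because the cells $\{1,2,3\},\dots,\{k-1,k,k+1\}$ are identical to those of a straight linear $2$-tree, Lemma~\ref{lem3} applies unchanged, and these transformations peel off a series tail of total resistance $\sum_{i=1}^{k-1}\frac{F_iF_{i+1}}{L_iL_{i+1}}$ ending at the center $L$ of the $(k-1)$st $Y$, which is a cut vertex. Symmetrically, the subgraph on vertices $k+2,k+3,\dots,n$ is again a straight linear $2$-tree, so running $\Delta$--Y transformations from vertex $n$ until one cell remains peels off a second tail $\sum_{i=1}^{m-k-2}\frac{F_iF_{i+1}}{L_iL_{i+1}}$ ending at a cut vertex $R$ (with the obvious degenerate reading when $k$ is near $n-3$ and this segment is short). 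By the Cut Vertex theorem, $r_{m,k}(1,n)=\sum_{i=1}^{k-1}\frac{F_iF_{i+1}}{L_iL_{i+1}}+\sum_{i=1}^{m-k-2}\frac{F_iF_{i+1}}{L_iL_{i+1}}+r_{\mathrm{core}}(L,R)$, where the core is the $6$-vertex graph on $L,k,k+1,k+2,k+3,R$ whose edge weights are: $L$-$k$ and $L$-$(k+1)$ equal to the $s$ and $b$ quantities of Lemma~\ref{lem3} at index $k-1$; $R$-$(k+3)$ and $R$-$(k+2)$ their mirror images at index $m-k-2$; and the four unit edges $\{k,k+2\},\{k,k+3\},\{k+1,k+2\},\{k+2,k+3\}$ where the bend sits.

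I would then evaluate $r_{\mathrm{core}}(L,R)$ by elementary reductions. In the core the vertex $k+1$ has degree two, so a series move absorbs it; the resulting triangle $\{L,k,k+2\}$ is eliminated by a $\Delta$--Y move, after which $k$ has degree two and is absorbed; the resulting triangle is eliminated by a second $\Delta$--Y move, leaving a pair of parallel edges joined to $L$ and $R$ by short series paths. A parallel move and two more series moves then give $r_{\mathrm{core}}(L,R)$ as an explicit ratio of products of Fibonacci and Lucas numbers; at each step the simplification is a one- or two-line application of the identities of Section~\ref{sec:note}, chiefly Propositions~\ref{prop:f2m},~\ref{prop:splitsum},~\ref{prop:splitdiff},~\ref{prop:catalan},~\ref{prop:2fm1} and Corollaries~\ref{cor:splitsum},~\ref{cor:2fm1}.

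It then remains to add the three pieces and recognize the stated closed form. The two partial sums of $\sum\frac{F_iF_{i+1}}{L_iL_{i+1}}$ are treated exactly as the second equality of Theorem~\ref{thm:effRes} was treated, and together with $r_{\mathrm{core}}$ they produce the $\frac{m+1}{5}+\frac{4F_{m+1}}{5L_{m+1}}(\cdots)$ shape. To obtain the alternating sum $\sum_{j=3}^{k}(-1)^jF_{m-2j+3}(F_{m+2}+F_{j-2}F_{m-j+1})$ in the numerator it is cleanest to induct on the bend position: since the bent-$k$ and bent-$(k+1)$ trees agree away from their respective bends, $r_{m,k+1}(1,n)-r_{m,k}(1,n)$ reduces to a bounded local comparison, which I would evaluate and match to the single summand indexed by $j=k+1$; the base case $k=3$ is the direct computation above.

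The main obstacle is the bend itself. It makes $\deg(k)=5$ and $\deg(k+1)=3$ rather than $4$, so the clean ``one new $t_i$ per cell'' recursion of Lemma~\ref{lem3} breaks down exactly where the bend meets the peeling front; one must track carefully which vertex becomes degree two (it is $k+1$, not $k$, once the bend edge $\{k,k+3\}$ is present) and what the reconfigured triangle weights become. Once that local analysis is pinned down, the core reduction and the final identification with the $(-1)^j$ sum are routine but lengthy, which is why recasting the last step as an induction on $k$ is worthwhile.
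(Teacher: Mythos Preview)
The paper does not prove this theorem. It is stated in Section~7 (Conclusions and conjectures) as a result obtained in the authors' companion work~\cite{bef2}, with no argument given here beyond the figure and the formula. So there is no in-paper proof to compare your proposal against.

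On the merits of your plan: the two-sided $\Delta$--Y peeling followed by reduction of a bounded ``core'' around the bend is the natural extension of the method used for Theorem~\ref{thm:effRes} and Lemma~\ref{lem43}, and your diagnosis of the local degrees ($\deg(k)=5$, $\deg(k+1)=3$) is correct. One bookkeeping point to watch: Lemma~\ref{lem3} runs cleanly only while the vertex absorbed at each merge step genuinely has degree~$2$. In the bent graph the edge $\{k+1,k+3\}$ is \emph{absent} and $\{k,k+3\}$ is \emph{present}, so the last clean left-hand step is the $(k-2)$nd, not the $(k-1)$st; the $(k-1)$st transform already sees the altered incidence at vertex~$k$. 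Consequently your core should be set up from one step earlier than you wrote (or, equivalently, the $(k-1)$st transform must be redone by hand as part of the core reduction). You essentially flag this in your final paragraph, but the discrepancy between ``$k-1$ unchanged transforms'' in the first paragraph and the caveat at the end should be resolved. The induction on the bend position~$k$ for identifying the alternating sum is a sensible device; note that the base case $k=3$ is the smallest admissible bend by Definition~\ref{def:lin2treestb}, and there the ``straight'' left arm has only two cells, so the core calculation carries most of the weight.
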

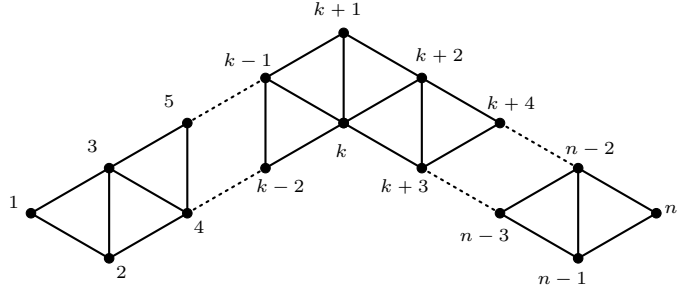
\begin{figure}
\begin{center}
\begin{tikzpicture}[line cap=round,line join=round,>=triangle 45,x=1.0cm,y=1.0cm,scale = 1.2]
\draw [line width=.8pt] (-5.464101615137757,2.)-- (-4.598076211353318,1.5);
\draw [line width=.8pt] (-4.598076211353318,1.5)-- (-4.598076211353318,2.5);
\draw [line width=.8pt] (-4.598076211353318,2.5)-- (-3.732050807568879,2.);
\draw [line width=.8pt] (-3.732050807568879,2.)-- (-3.7320508075688785,3.);
%\draw [line width=.8pt,dotted] (-3.7320508075688785,3.)-- (-2.8660254037844393,2.5);
\draw [line width=.8pt] (-2.8660254037844393,2.5)-- (-2.866025403784439,3.5);
\draw [line width=.8pt] (-2.866025403784439,3.5)-- (-2.,3.);
\draw [line width=.8pt] (-2.,3.)-- (-2.,4.);
\draw [line width=.8pt] (-2.,4.)-- (-1.1339745962155612,3.5);
\draw [line width=.8pt] (-1.1339745962155612,3.5)-- (-2.,3.);
\draw [line width=.8pt] (-2.,3.)-- (-1.1339745962155616,2.5);
\draw [line width=.8pt] (-1.1339745962155616,2.5)-- (-1.1339745962155612,3.5);
\draw [line width=.8pt] (-1.1339745962155612,3.5)-- (-0.2679491924311225,3.);
\draw [line width=.8pt] (-0.2679491924311225,3.)-- (-1.1339745962155616,2.5);
\draw [line width=.8pt,dotted] (-1.1339745962155616,2.5)-- (-0.26794919243112303,2.);
%\draw [line width=.8pt,dotted] (-0.26794919243112303,2.)-- (-0.2679491924311225,3.);
\draw [line width=.8pt,dotted] (-0.2679491924311225,3.)-- (0.5980762113533165,2.5);
\draw [line width=.8pt] (0.5980762113533165,2.5)-- (-0.26794919243112303,2.);
\draw [line width=.8pt] (-0.26794919243112303,2.)-- (0.5980762113533152,1.5);
\draw [line width=.8pt] (0.5980762113533152,1.5)-- (0.5980762113533165,2.5);
\draw [line width=.8pt] (0.5980762113533165,2.5)-- (1.464101615137755,2.);
\draw [line width=.8pt] (1.464101615137755,2.)-- (0.5980762113533152,1.5);
\draw [line width=.8pt] (-2.,4.)-- (-2.866025403784439,3.5);
\draw [line width=.8pt,dotted] (-2.866025403784439,3.5)-- (-3.7320508075688785,3.);
\draw [line width=.8pt] (-3.7320508075688785,3.)-- (-4.598076211353318,2.5);
\draw [line width=.8pt] (-4.598076211353318,2.5)-- (-5.464101615137757,2.);
\draw [line width=.8pt] (-4.598076211353318,1.5)-- (-3.732050807568879,2.);
\draw [line width=.8pt,dotted] (-3.732050807568879,2.)-- (-2.8660254037844393,2.5);
\draw [line width=.8pt] (-2.8660254037844393,2.5)-- (-2.,3.);
\begin{scriptsize}
\draw [fill=black] (-2.,4.) circle (1.5pt);
\draw[color=black] (-2.06648828953202,4.259331085745072) node {$k+1$};
\draw [fill=black] (-1.1339745962155612,3.5) circle (1.5pt);
\draw[color=black] (-0.9407359844212153,3.7428094398707032) node {$k+2$};
\draw [fill=black] (-2.,3.) circle (1.5pt);
\draw[color=black] (-2.0267558552339913,2.6989231016718021) node {$k$};
\draw [fill=black] (-2.866025403784439,3.5) circle (1.5pt);
\draw[color=black] (-3.0597991469827295,3.689832860806665) node {$k-1$};
\draw [fill=black] (-2.8660254037844393,2.5) circle (1.5pt);
\draw[color=black] (-2.702207238300474,2.2919066737377372) node {$k-2$};
\draw [fill=black] (-1.1339745962155616,2.5) circle (1.5pt);
\draw[color=black] (-1.3248161826354898,2.2919066737377372) node {$k+3$};
\draw [fill=black] (-2.,3.) circle (1.5pt);
\draw [fill=black] (-0.2679491924311225,3.) circle (1.5pt);
\draw[color=black] (-0.14608729846064736,3.213043649230324) node {$k+4$};
\draw [fill=black] (-3.7320508075688785,3.) circle (1.5pt);
\draw[color=black] (-3.9339127015393545,3.2395319387623434) node {$5$};
\draw [fill=black] (-3.732050807568879,2.) circle (1.5pt);
\draw[color=black] (-3.6028090823891175,1.8488967383313488) node {$4$};
\draw [fill=black] (-0.26794919243112303,2.) circle (1.5pt);
\draw[color=black] (-0.4374584833128556,1.7488967383313488) node {$n-3$};
\draw [fill=black] (-4.598076211353318,2.5) circle (1.5pt);
\draw[color=black] (-4.78153796656396,2.7494985824199927) node {$3$};
\draw [fill=black] (-4.598076211353318,1.5) circle (1.5pt);
\draw[color=black] (-4.463678492179733,1.345619237222989) node {$2$};
\draw [fill=black] (-5.464101615137757,2.) circle (1.5pt);
\draw[color=black] (-5.655651521120585,2.1270237784175476) node {$1$};
\draw [fill=black] (0.5980762113533165,2.5) circle (1.5pt);
\draw[color=black] (0.7280262560959774,2.709766148121964) node {$n-2$};
\draw [fill=black] (0.5980762113533152,1.5) circle (1.5pt);
\draw[color=black] (0.4234109264777597,1.2721075267550078) node {$n-1$};
\draw [fill=black] (1.464101615137755,2.) circle (1.5pt);
\draw[color=black] (1.628628100184621,2.0475589098214906) node {$n$};
\end{scriptsize}
\end{tikzpicture}
\end{center}
\caption{A linear 2-tree with $n$ vertices and single bend at vertex $k$.}
\label{fig:bent}
\end{figure}

With this result in mind we have the following conjecture
\begin{conjecture}
Let $G$ be a linear 2-tree.  If the diameter of the 2-tree tends to infinity, the maximal resistance distance of the linear 2-tree is also unbounded.
\end{conjecture}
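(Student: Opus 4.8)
The plan is to reduce the conjecture to a single effective-resistance estimate. Suppose $(G_\nu)_{\nu\ge 1}$ is a sequence of linear $2$-trees with $\operatorname{diam}(G_\nu)\to\infty$, and for each $\nu$ pick vertices $u_\nu,v_\nu$ realizing the diameter $D_\nu:=d_{G_\nu}(u_\nu,v_\nu)$. Since the maximal resistance distance of $G_\nu$ is at least $r_{G_\nu}(u_\nu,v_\nu)$, it suffices to prove an inequality of the form $r_G(u,v)\ge c\,d_G(u,v)-C$ with universal constants $c>0$ and $C\ge 0$, valid for every linear $2$-tree $G$ and every pair $u,v$ of its vertices. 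The two extremes of the family frame the problem: for the straight linear $2$-tree, Theorem~\ref{thm:effRes} gives $r_m(1,n)\sim m/5$ while $d(1,n)\sim m/2$, so such an inequality holds there; and the fan is a linear $2$-tree of diameter $2$ whose resistance distances are uniformly bounded, which is exactly why the hypothesis $\operatorname{diam}\to\infty$ cannot be dropped.

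First I would fix the combinatorial skeleton. By Definition~\ref{def:lin2tree} a linear $2$-tree has a clique path $T_1,T_2,\dots,T_m$ of triangles with $T_i\cap T_{i+1}$ a single edge, and the construction is encoded by a sequence of binary choices (which of the two available edges the next triangle is glued onto). Grouping that sequence into maximal runs splits $G$ into consecutive \emph{straight segments} (alternating choices, locally the straight linear $2$-tree of Definition~\ref{def:lin2treest}) and \emph{fan segments} (constant choices, locally a fan with one high-degree hub); the one-bend computation behind~\eqref{eq:univconf} is the case of exactly two straight segments. The strategy is then: (i) a straight segment of length $\ell$ contributes at least $c\ell$ to $r_G(1',n')$, where $1',n'$ are the two degree-$2$ vertices, via the Nash--Williams inequality applied to the size-$3$ ``column'' cuts internal to that segment (every other one being edge-disjoint), which is the argument behind Theorem~\ref{thm:effRes} localized; (ii) each fan segment contributes at least a fixed $c_0>0$; and (iii) these contributions add, so that $r_G(1',n')$ is at least a constant times their sum. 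Combined with the elementary upper bound $\operatorname{diam}(G)\lesssim\sum_{\text{straight}}\ell_s+\#\{\text{segments}\}$, this would give $r_G(1',n')\ge c'\operatorname{diam}(G)-C'$ and hence the conjecture, in fact with a linear rather than merely divergent rate.

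The main obstacle is step (iii), and with it the honest content of step (ii): a fan segment is attached to its neighbours along edges, not cut vertices, so the cut-vertex theorem does not isolate it, and when the hub of a \emph{large} fan segment sits at an interface, the natural cutsets separating the two ends of the strip near that interface have unbounded size, so a term-by-term Nash--Williams (or Dirichlet-principle) bound degenerates there even though the true resistance across such a segment is still $\Theta(1)$, as the fan itself shows. Overcoming this seems to require a global device: either a ``generalized $\Delta$--Y'' sweep of the strip, block by block, in the spirit of Sections~3--4, proving that the accumulated tail resistance increases by at least $c_0$ per block irrespective of the block's type or size; or a probabilistic argument bounding the commute time between $1'$ and $n'$ from below. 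A reasonable first milestone is to prove the conjecture under the extra hypothesis that the fan-segment lengths are uniformly bounded: then $G$ has bounded degree, all the separating cutsets one needs (column cuts inside straight segments, interface cuts near fans) have uniformly bounded size, Nash--Williams applies cleanly to a single nested edge-disjoint family, and one recovers $r_G(1',n')\ge c'\operatorname{diam}(G)-C'$; the monotonicity arguments of Section~\ref{sec:mono} should then transfer with only cosmetic changes.
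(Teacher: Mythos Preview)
This statement is listed in the paper as an open \emph{conjecture}; the paper gives no proof, only the single-bend formula~\eqref{eq:univconf} and the straight case as supporting evidence. There is therefore no paper proof to compare your attempt against, and what you have submitted is explicitly a plan rather than a proof.

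Within that plan you locate the obstruction correctly. The straight/fan run decomposition is sound, and Nash--Williams on the size-$3$ column cuts inside a straight run gives the contribution you claim. The gap is exactly your step~(iii): a long fan block has a hub of unbounded degree, so no bounded-size edge cut crosses it, the cut-vertex theorem does not apply at the interfaces (blocks meet in an edge, not a vertex), and there is no general principle forcing the block contributions to add. Your bounded-fan-length milestone is a strict special case and leaves the conjecture open, since a sequence of linear $2$-trees with diameter tending to infinity may well carry fan blocks of unbounded length. One further caution on the alternative you float: a $\Delta$--$Y$ sweep along $T_1,\dots,T_m$ will \emph{not} show that the accumulated tail grows by a fixed amount per triangle, because in a pure fan the end-to-end resistance stays bounded as $m\to\infty$; any such argument must track the bend pattern, not just count triangles. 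A more promising bookkeeping parameter than $m$ is the number of ``straight'' steps in the binary word (equivalently, the number of times the retained separator vertex changes): this bounds the diameter from above, since each fan block contributes $O(1)$ to distance, and at each such step the vertex-index partition gives a size-$3$ edge cut separating $1'$ from $n'$. Showing that enough of these cuts are pairwise edge-disjoint to run Nash--Williams and recover a lower bound comparable to the diameter is a concrete route, but carrying it out is still the work to be done.
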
	 
A graph has tree width $\leq k$ if it is the subgraph of some $k$-tree. 
We have %also 
done computer calculations for the triangular grid graph on $3, 6, 10, 15, 21, \ldots$ vertices shown in Figure~\ref{fig:supertriangle} (for 15 vertices) which does not have bounded tree-width as the number of vertices goes to infinity.  Nevertheless, assuming unit resistances on each edge, we have the following conjecture about the growth of the resistance distance.
\begin{figure}
\begin{center}
\begin{tikzpicture}[scale = .6, line cap=round,line join=round,>=triangle 45,x=1.0cm,y=1.0cm,scale = 1.2]
\draw [line width=.8pt,color=black] (-2.,10.)-- (-3.,11.732050807568879);
\draw [line width=.8pt,color=black] (-2.,10.)-- (-4.,10);
\draw [line width=.8pt,color=black] (-3.,11.732050807568879)-- (-4.,10.);
\draw [line width=.8pt,color=black] (-4.,10.)-- (-3.,8.267949192431121);
\draw [line width=.8pt,color=black] (-3.,8.267949192431121)-- (-2.,10.);
\draw [line width=.8pt,color=black] (-4.,10.)-- (-5.,8.267949192431123);
\draw [line width=.8pt,color=black] (-5.,8.267949192431123)-- (-3.,8.267949192431121);
\draw [line width=.8pt,color=black] (-3.,8.267949192431121)-- (-1.,8.267949192431121);
\draw [line width=.8pt,color=black] (-1.,8.267949192431121)-- (-2.,10.);
\draw [line width=.8pt,color=black] (-3.,8.267949192431121)-- (-2.,6.535898384862243);
\draw [line width=.8pt,color=black] (-3.,8.267949192431121)-- (-4.,6.535898384862244);
\draw [line width=.8pt,color=black] (-1.,8.267949192431121)-- (-2.,6.535898384862244);
\draw [line width=.8pt,color=black] (-4.,6.535898384862244)-- (-2.,6.535898384862243);
\draw [line width=.8pt,color=black] (-3.,8.267949192431121)-- (-5.,8.267949192431121);
\draw [line width=.8pt,color=black] (-5.,8.267949192431121)-- (-4.,6.535898384862244);
\draw [line width=.8pt,color=black] (-2.,6.535898384862243)-- (0.,6.535898384862242);
\draw [line width=.8pt,color=black] (0.,6.535898384862242)-- (-1.,8.267949192431121);
\draw [line width=.8pt,color=black] (-5.,8.267949192431123)-- (-6.,6.535898384862246);
\draw [line width=.8pt,color=black] (-6.,6.535898384862246)-- (-4.,6.535898384862244);
\draw [line width=.8pt,color=black] (-2.,6.535898384862243)-- (-1.,4.803847577293364);
\draw [line width=.8pt,color=black] (-1.,4.803847577293364)-- (0.,6.535898384862242);
\draw [line width=.8pt,color=black] (-4.,6.535898384862244)-- (-3.,4.8038475772933635);
\draw [line width=.8pt,color=black] (-3.,4.8038475772933635)-- (-2.,6.535898384862243);
\draw [line width=.8pt,color=black] (-6.,6.535898384862246)-- (-5.,4.803847577293364);
\draw [line width=.8pt,color=black] (-5.,4.803847577293364)-- (-4.,6.535898384862244);
\draw [line width=.8pt,color=black] (-6.,6.535898384862246)-- (-7.,4.80384757729337);
\draw [line width=.8pt,color=black] (1.,4.803847577293362)-- (0.,6.535898384862242);
\draw [line width=.8pt,color=black] (-7.,4.80384757729337)-- (-5.,4.803847577293364);
\draw [line width=.8pt,color=black] (-3.,4.80384757729337)-- (-5.,4.803847577293364);
\draw [line width=.8pt,color=black] (-3.,4.80384757729337)-- (-1.,4.803847577293364);
\draw [line width=.8pt,color=black] (-1.,4.803847577293364)-- (1.,4.803847577293362);
%\draw [line width=.8pt,color=black] (-1.,4.803847577293364)-- (0.,3.071796769724484);
%\draw [line width=.8pt,color=black] (0.,3.071796769724484)-- (1.,4.803847577293362);
%\draw [line width=.8pt,color=black] (-3.,4.8038475772933635)-- (-2.,3.071796769724486);
%\draw [line width=.8pt,color=black] (-2.,3.071796769724486)-- (-1.,4.803847577293364);
%\draw [line width=.8pt,color=black] (-5.,4.803847577293364)-- (-4.,3.0717967697244837);
%\draw [line width=.8pt,color=black] (-4.,3.0717967697244837)-- (-3.,4.8038475772933635);
\draw [line width=.8pt,color=black] (-5.,4.803847577293364)-- (-7.,4.80384757729337);
%\draw [line width=.8pt,color=black] (-7.,4.80384757729337)-- (-6.,3.071796769724488);
%\draw [line width=.8pt,color=black] (-6.,3.071796769724488)-- (-5.,4.803847577293364);
%\draw [line width=.8pt,color=black] (-7.,4.80384757729337)-- (-8.,3.071796769724494);
%\draw [line width=.8pt,color=black] (-8.,3.071796769724494)-- (-6.,3.071796769724488);
%\draw [line width=.8pt,color=black] (0.,3.071796769724484)-- (2.,3.0717967697244815);
%\draw [line width=.8pt,color=black] (2.,3.0717967697244815)-- (1.,4.803847577293362);
%\draw [line width=.8pt,color=black] (-1.,4.803847577293364)-- (-2.,3.0717967697244877);
%\draw [line width=.8pt,color=black] (-2.,3.0717967697244877)-- (0.,3.071796769724484);
%\draw [line width=.8pt,color=black] (-3.,4.8038475772933635)-- (-4.,3.0717967697244863);
%\draw [line width=.8pt,color=black] (-4.,3.0717967697244863)-- (-2.,3.071796769724486);
%\draw [line width=.8pt,color=black] (-5.,4.803847577293364)-- (-6.,3.0717967697244846);
%\draw [line width=.8pt,color=black] (-6.,3.0717967697244846)-- (-4.,3.0717967697244837);

\begin{scriptsize}
\draw [fill=black] (-4.,10.) circle (2.5pt);
\draw[color=black] (-3,12.1) node {$a$};
\draw [fill=black] (-2.,10.) circle (2.5pt);
\draw [fill=black] (-3.,11.732050807568879) circle (2.5pt);
\draw [fill=black] (-3.,8.267949192431121) circle (2.5pt);
\draw [fill=black] (-5.,8.267949192431123) circle (2.5pt);
\draw [fill=black] (-1.,8.267949192431121) circle (2.5pt);
\draw [fill=black] (-2.,6.535898384862243) circle (2.5pt);
\draw [fill=black] (-4.,6.535898384862244) circle (2.5pt);
\draw [fill=black] (-5.,8.267949192431121) circle (2.5pt);
\draw [fill=black] (0.,6.535898384862242) circle (2.5pt);
\draw [fill=black] (-6.,6.535898384862246) circle (2.5pt);
\draw [fill=black] (-1.,4.803847577293364) circle (2.5pt);
\draw [fill=black] (-3.,4.8038475772933635) circle (2.5pt);
\draw [fill=black] (-5.,4.803847577293364) circle (2.5pt);
\draw [fill=black] (-7.,4.80384757729337) circle (2.5pt);
\draw [fill=black] (1.,4.803847577293362) circle (2.5pt);
%\draw [fill=black] (0.,3.071796769724484) circle (2.5pt);
%\draw [fill=black] (-2.,3.071796769724486) circle (2.5pt);
%\draw [fill=black] (-4.,3.0717967697244837) circle (2.5pt);
%\draw [fill=black] (-6.,3.071796769724488) circle (2.5pt);
%\draw [fill=black] (-8.,3.071796769724494) circle (2.5pt);
%\draw [fill=black] (2.,3.0717967697244815) circle (2.5pt);
%\draw [fill=black] (-2.,3.0717967697244877) circle (2.5pt);
%\draw [fill=black] (-4.,3.0717967697244863) circle (2.5pt);
%\draw [fill=black] (-6.,3.0717967697244846) circle (2.5pt);
\draw[color=black] (-7.3,4.5) node {$b$};

\end{scriptsize}
\end{tikzpicture}
\end{center}
\caption{A triangular grid with $4$ rows.}
\label{fig:supertriangle}
\end{figure}
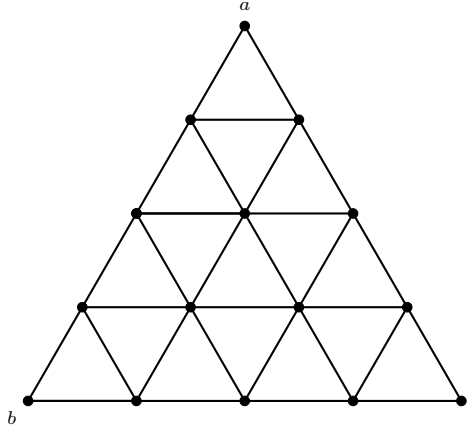

 \begin{conjecture}
Let $T_n$ be the triangular grid graph shown in Figure~\ref{fig:supertriangle} with $n$ rows and $m=n^2$ cells.  Moreover let $a$ and $b$ be distinct vertices with degree 2 and let $r_n(a,b)$ be the resistance distance between $a$ and $b$ in $T_n$.  Then
\[\lim_{n\rightarrow \infty} \exp(r_{n+1}(a,b)) -\exp(r_n(a,b)) = C > 0.\]
Moreover $\lim_{n\rightarrow \infty} r_n(a,b) = \infty.$
\end{conjecture}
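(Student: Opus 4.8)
Unlike a linear $2$-tree, the triangular grid $T_n$ admits no finite sequence of series, parallel, and $\Delta$--Y reductions that collapses it: the cut separating $T_n$ from the strip of $2n+1$ triangles one adjoins to obtain $T_{n+1}$ is a path on $n+1$ vertices, so no finite-state recursion in the single scalar $r_n(a,b)$ can hold, and one is driven to a potential-theoretic analysis. The plan is the following. First, the dihedral symmetry group of $T_n$ (order $6$) acts transitively on the three degree-$2$ corners, so $r_n(a,b)$ does not depend on which pair of corners is chosen; write $\rho_n$ for the common value. Second, the second assertion is a formal consequence of the first: if $\exp(\rho_{n+1})-\exp(\rho_n)\to C>0$ then Stolz--Ces\`aro gives $\exp(\rho_n)/n\to C$, hence $\exp(\rho_n)\to\infty$ and $\rho_n\to\infty$. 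Thus everything reduces to a sharp expansion of the form
\[
\rho_n=\ln n+\gamma+\delta n^{-1}+o(n^{-1})
\]
for constants $\gamma,\delta$ (with $\delta$ possibly $0$), which yields $\exp(\rho_n)=e^{\gamma}n+e^{\gamma}\delta+o(1)$ and hence $\exp(\rho_{n+1})-\exp(\rho_n)\to e^{\gamma}=:C>0$.

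To obtain this expansion I would realise $T_n$ as the portion of the triangular lattice (unit conductances) that fills an equilateral triangle of side $n$, and use the identity
\[
\rho_n = L^\dagger_{aa}+L^\dagger_{bb}-2L^\dagger_{ab},
\]
valid since $\rho_n=(\e_a-\e_b)^{T}L^\dagger(\e_a-\e_b)$ as in Section~\ref{sec:note}. The point is that the logarithmic divergence of $\rho_n$ resides entirely in the diagonal ``self-energy'' terms $L^\dagger_{aa},L^\dagger_{bb}$: a unit current injected at a degree-$2$ vertex sitting at the apex of a lattice wedge of interior angle $\theta$ produces a potential near that vertex exceeding the bulk potential by $\tfrac{c(\theta)}{\sigma}\ln n+O(1)$, where $\sigma$ is the conductivity of the triangular lattice and $c(\theta)$ depends only on $\theta$ (heuristically, the conformal map $z\mapsto z^{\pi/\theta}$ straightens the wedge to a half-plane and rescales image-charge distances by the power $\pi/\theta$); whereas $a$ and $b$ lie at Euclidean distance $\asymp n$, so $L^\dagger_{ab}=O(1)$ carries no logarithm. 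Specialising to $\theta=\pi/3$ at both corners then gives the leading term, and a careful accounting of the remaining $O(1)$ pieces --- the lattice-regularised self-energies, the image terms, and the smooth (regular) part of the corner Green's function of the triangular domain --- would produce $\gamma$ and $\delta$. The analytic input needed is convergence, with controlled rate, of the discrete Green's function of $T_n$ to the continuum Neumann Green's function of the triangular domain, including its corner expansions.

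The main obstacle --- and the reason this is only conjectured --- is twofold. First, one needs genuinely sharp asymptotics of the \emph{discrete} Green's function up to and including a corner of angle $<\pi$ of a triangular-lattice domain, with a remainder small enough to feed the increment computation; such second-order estimates are standard for the square lattice in smooth domains but are delicate at non-smooth boundary points and for the triangular lattice. Second, and more dangerously, the conjecture as stated forces the coefficient $\tfrac{2c(\pi/3)}{\sigma}$ of $\ln n$ to equal exactly $1$; this is an arithmetic identity between a corner constant and the triangular-lattice conductivity that must be verified, for if it fails one instead gets $\exp(\rho_n)\asymp n^{2c(\pi/3)/\sigma}$ with \emph{divergent} increments. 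Part of the proof is therefore to pin this exponent down exactly (or to reinterpret the normalisation implicit in ``$n$ rows'') and thereby either confirm the conjecture or replace it by the correct growth law.

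Finally, a more combinatorial route in the spirit of Section~\ref{sec:spanningtrees} would use $\rho_n=\kappa_2(T_n;a,b)/\kappa(T_n)$, the number of spanning $2$-forests separating $a$ and $b$ divided by the number of spanning trees, evaluating both by transfer matrices along the rows of the grid in Figure~\ref{fig:supertriangle}. I expect this to meet the same difficulty: both counts behave like $e^{cn^2+o(n^2)}$, so the logarithmic behaviour of $\rho_n$ sits in a fine lower-order cancellation invisible to crude enumeration, and extracting it appears to require exactly the potential-theoretic estimates above.
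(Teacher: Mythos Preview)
The statement you are attempting to prove is labelled \emph{Conjecture} in the paper, and the paper offers no proof whatsoever --- only the remark that it is supported by ``computer calculations.'' There is therefore no argument in the paper to compare your proposal against.

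As for the proposal itself, you are candid that it is a research outline rather than a proof, and your own diagnosis of the obstacles is accurate. The decisive gap is the one you flag second: your scheme requires the asymptotic $\rho_n=\ln n+\gamma+o(1)$ with leading coefficient \emph{exactly} $1$, i.e.\ the identity $2c(\pi/3)/\sigma=1$ between the corner constant and the triangular-lattice conductivity. You do not verify this, and if it fails the conjecture is simply false in the form stated (the increments $\exp(\rho_{n+1})-\exp(\rho_n)$ would diverge or tend to $0$). Even granting the coefficient, the step from $\rho_n=\ln n+\gamma+o(1)$ to a convergent sequence of increments needs the error to be $o(1/n)$ after exponentiation, which is a genuinely second-order estimate at a reentrant-free but non-smooth corner of a triangular-lattice domain; such estimates are not, to my knowledge, available off the shelf. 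Your proposal is therefore a plausible line of attack on an open problem, not a proof, and the paper does not claim otherwise.
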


These examples convincingly demonstrate that resistance distance should not be discounted as a method for link prediction.  A difficult, subtle problem is to make a judgment about a given real-life network as to whether it is more like a random geometric graph, a subgraph of a linear $k$-tree, a subgraph of a triangular grid, or in some other class.  

\begin{appendices}
\section{Appendix}\label{app}

The following two propositions complete the proof of Theorem \ref{thm:effRes}
\begin{proposition}\label{prop:rm1nA}
For $m \geq 1$, 
\[
 \sum_{i = 1}^{m} \frac{F_i F_{i+1}}{L_i L_{i+1}} = \frac{(m+1) L_{m+1} - F_{m+1}}{5 L_{m+1}}.
\]
\end{proposition}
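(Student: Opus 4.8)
The plan is to prove the identity by induction on $m$. For the base case $m=1$ one checks directly that the left-hand side is $F_1F_2/(L_1L_2)=\tfrac{1\cdot 1}{1\cdot 3}=\tfrac13$, while the right-hand side is $(2L_2-F_2)/(5L_2)=\tfrac{6-1}{15}=\tfrac13$, so equality holds.

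For the inductive step, assume the formula holds for $m$. Adding the $(m+1)$st term $F_{m+1}F_{m+2}/(L_{m+1}L_{m+2})$ to both sides, it suffices to verify
\[
\frac{(m+1)L_{m+1}-F_{m+1}}{5L_{m+1}}+\frac{F_{m+1}F_{m+2}}{L_{m+1}L_{m+2}}=\frac{(m+2)L_{m+2}-F_{m+2}}{5L_{m+2}}.
\]
Multiplying through by the common denominator $5L_{m+1}L_{m+2}$ and cancelling the terms $(m+1)L_{m+1}L_{m+2}$ on both sides, the claim reduces to
\[
L_{m+1}(L_{m+2}-F_{m+2})=F_{m+1}(5F_{m+2}-L_{m+2}).
\]
Now I would use Corollary~\ref{cor:2fm1} in the form $L_k=F_{k-1}+F_{k+1}$ together with the Fibonacci recurrence: writing $L_{m+2}=F_{m+1}+F_{m+3}=F_{m+2}+2F_{m+1}$ gives $L_{m+2}-F_{m+2}=2F_{m+1}$, and hence $5F_{m+2}-L_{m+2}=4F_{m+2}-2F_{m+1}=2(F_{m+2}+F_m)=2L_{m+1}$, again by Corollary~\ref{cor:2fm1}. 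Substituting, both sides of the displayed identity equal $2F_{m+1}L_{m+1}$, which completes the induction.

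A second, equivalent approach is to telescope: setting $c_i=F_{i+1}/(5L_{i+1})$ (so $c_0=\tfrac15$), the claim is equivalent to $\sum_{i=1}^m\bigl(\tfrac15-\tfrac{F_iF_{i+1}}{L_iL_{i+1}}\bigr)=c_m-c_0$, and each summand equals $c_i-c_{i-1}$ precisely when $L_iL_{i+1}-5F_iF_{i+1}=F_{i+1}L_i-F_iL_{i+1}$ — both sides evaluating to $2(-1)^i$ via Catalan's identity (Proposition~\ref{prop:catalan}) and Corollary~\ref{cor:2fm1}, which is the same elementary manipulation used in the inductive step. Either way, the argument is entirely routine; the only place that needs a moment's care is the collapse of the inductive step to the trivial identity $2F_{m+1}L_{m+1}=2F_{m+1}L_{m+1}$, i.e. the two elementary evaluations $L_{m+2}-F_{m+2}=2F_{m+1}$ and $5F_{m+2}-L_{m+2}=2L_{m+1}$.
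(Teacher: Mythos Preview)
Your proof is correct and follows essentially the same approach as the paper: induction on $m$ with the same base case and the same strategy of adding the $(m+1)$st term. The only cosmetic difference is in the inductive-step algebra---the paper invokes Proposition~\ref{prop:15sum} to rewrite $5F_{m+1}F_{m+2}$ as $F_{m+1}(L_{m+1}+L_{m+3})$, whereas you cancel the $(m+1)L_{m+1}L_{m+2}$ terms first and reduce to the two evaluations $L_{m+2}-F_{m+2}=2F_{m+1}$ and $5F_{m+2}-L_{m+2}=2L_{m+1}$ via Corollary~\ref{cor:2fm1}; these are equivalent elementary manipulations. Your telescoping remark is a pleasant alternative viewpoint that does not appear in the paper.
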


\begin{proof}
For $m = 1$: 
\[
\frac{2 \cdot L_2 - F_2}{5L_2} = \frac{2\cdot 3 - 1}{5 \cdot 3} = \frac{5}{15} = \frac{1}{3},
\text{ and }
 \frac{F_1 F_{2}}{L_1 L_{2}} =  \frac{1}{3}.
\]
Define
\[S_m= \sum_{i = 1}^{m} \frac{F_i F_{i+1}}{L_i L_{i+1}}\]
and assume that 
\[
 S_m = \frac{(m+1) L_{m+1} - F_{m+1}}{5 L_{m+1}}.
\]
Then, 
\[
S_{m+1}=\sum_{i = 1}^{m+1} \frac{F_i F_{i+1}}{L_i L_{i+1}} = \frac{(m+1) L_{m+1} - F_{m+1}}{5 L_{m+1}} + \frac{F_{m+1}F_{m+2}}{L_{m+1}L_{m+2}}
= \frac{(m+1) L_{m+1}L_{m+2} - F_{m+1}L_{m+2}+ 5F_{m+1}F_{m+2}}{5L_{m+1}L_{m+2}}.
\]
Applying Proposition~\ref{prop:15sum} yields, 

\[\begin{aligned}
S_{m+1} &= \frac{(m+1) L_{m+1}L_{m+2} - F_{m+1}L_{m+2}+ F_{m+1}L_{m+1} + F_{m+1} L_{m+3}}{5L_{m+1}L_{m+2}}.\\
& = \frac{(m+1) L_{m+1}L_{m+2} + 2F_{m+1}L_{m+1}}{5L_{m+1}L_{m+2}}
= \frac{(m+1) L_{m+2} + 2F_{m+1}}{5L_{m+2}}\\
&=  \frac{(m+2) L_{m+2} -F_{m+2}}{5L_{m+2}}.
\end{aligned}\]
Since $2F_{m+1} = F_{m+1} + F_{m+3}  -F_{m+2} = L_{m+2} - F_{m+2}$ by Corollary~\ref{cor:2fm1}.\end{proof}

\begin{proposition}\label{prop:rm1nB}
\[
\frac{2F_{m+1}^2}{L_m L_{m+1}} + \frac{mL_{m} - F_{m}}{5 L_{m}}= \frac{m+1}{5} +  \frac{4F_{m+1}}{5L_{m+1}}.
\]
\end{proposition}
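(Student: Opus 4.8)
The plan is to clear denominators and reduce the claimed identity to a single $m$-free polynomial identity in Fibonacci numbers, which then follows from one substitution.

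First I would multiply both sides of the asserted equality by $5L_mL_{m+1}$ (nonzero, since every Lucas number appearing here is positive), obtaining the equivalent statement
\[
10F_{m+1}^2 + L_{m+1}\bigl(mL_m - F_m\bigr) = (m+1)L_mL_{m+1} + 4F_{m+1}L_m .
\]
The only term carrying the free parameter $m$ on the right is $(m+1)L_mL_{m+1} = mL_mL_{m+1} + L_mL_{m+1}$, and $mL_mL_{m+1}$ cancels the corresponding term on the left. Hence the assertion is equivalent to the $m$-free identity
\[
10F_{m+1}^2 = L_mL_{m+1} + F_mL_{m+1} + 4F_{m+1}L_m .
\]

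Next I would eliminate the Lucas numbers in favour of Fibonacci numbers. By Proposition~\ref{prop:2fm1} we have $L_m = 2F_{m+1} - F_m$, and applying the Fibonacci recurrence to this gives $L_{m+1} = 2F_{m+2} - F_{m+1} = F_{m+1} + 2F_m$. Substituting both expressions into the right-hand side yields
\[
(2F_{m+1}-F_m)(F_{m+1}+2F_m) + F_m(F_{m+1}+2F_m) + 4F_{m+1}(2F_{m+1}-F_m),
\]
and expanding, the $F_m^2$ terms cancel ($-2F_m^2 + 2F_m^2$), the $F_mF_{m+1}$ terms cancel ($3F_mF_{m+1} + F_mF_{m+1} - 4F_mF_{m+1}$), and what remains is $2F_{m+1}^2 + 8F_{m+1}^2 = 10F_{m+1}^2$. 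This proves the identity, and hence the proposition.

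The only obstacle is bookkeeping: carrying out the expansion above without a sign error. There is no conceptual content and no induction is needed, although one could alternatively check $m=1$ directly and induct using the recurrences for $F$ and $L$; the direct computation is shorter.
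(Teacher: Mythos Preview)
Your proof is correct. Both you and the paper begin by putting everything over the common denominator $5L_mL_{m+1}$, but you diverge in how the resulting numerator identity is verified. You immediately cancel the terms carrying the free factor $m$, then eliminate the Lucas numbers entirely via the substitutions $L_m = 2F_{m+1}-F_m$ and $L_{m+1} = F_{m+1}+2F_m$ (from Propositions~\ref{prop:2fm1} and~\ref{prop:lm1}), reducing the problem to a bare polynomial expansion in $F_m, F_{m+1}$. The paper instead keeps the Lucas numbers in play: it uses Proposition~\ref{prop:15sum} to write $10F_{m+1}^2 = 2F_{m+1}(L_m + L_{m+2})$, then repeatedly regroups via the Lucas recurrence $L_{m+2}=L_m+L_{m+1}$ until a factor of $L_m$ can be cancelled, finishing with Corollary~\ref{cor:2fm1}. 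Your route is shorter and entirely mechanical once the substitutions are made; the paper's route is a bit more structured but requires spotting the right identity at each step. Either is perfectly adequate here.
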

\begin{proof}
Using Proposition~\ref{prop:15sum} we obtain
 \begin{multline*}
    \frac{2F_{m+1}^2}{L_m L_{m+1}} + \frac{m L_{m} - F_{m}}{5 L_{m}}  \\
=   \frac{10F_{m+1}^2 + m L_{m}L_{m+1} - F_{m}L_{m+1}}{5L_m L_{m+1}} 
=\frac{2F_{m+1}(L_m+L_{m+2})+ m L_{m}L_{m+1} - F_{m}L_{m+1}}{5L_m L_{m+1}} \\
=   \frac{L_m(2F_{m+1} + m L_{m+1}) + 2F_{m+1}L_{m+2} - F_{m}L_{m+1}}{5L_m L_{m+1}} 
=   \frac{L_m(2F_{m+1} + m L_{m+1}) + 2F_{m+1}(L_{m}+L_{m+1}) - F_{m}L_{m+1}}{5L_m L_{m+1}} \\
  = \frac{L_m(4F_{m+1} + m L_{m+1}) + 2F_{m+1}L_{m+1} - F_{m}L_{m+1}}{5L_m L_{m+1}}
 = \frac{L_m(4F_{m+1} + m L_{m+1}) + L_{m+1}(2F_{m+1} - F_{m})}{5L_m L_{m+1}} \\
 =    \frac{L_m(4F_{m+1} + m L_{m+1}) + L_{m+1}(L_m)}{5L_m L_{m+1}} 
   = \frac{4F_{m+1} + (m+1) L_{m+1}}{5 L_{m+1}} \\
 = \frac{m+1}{5} +  \frac{4F_{m+1}}{5L_{m+1}}.  
\end{multline*}
\end{proof}

The following two propositions complete the proof of Theorem \ref{thm:main}
\begin{proposition}\label{app:simpA}
\begin{equation}\label{eq:appA}
F_{k+1}F_{m-2j-k+2}^2 + F_{m+1}F_{m-k+1}=F_{2m-2j-2k+3}F_{2j+k-1}+F_kF_{m-2j-k+2}F_{m-2j-k+3}
\end{equation}
\end{proposition}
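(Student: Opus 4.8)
The plan is to reduce \eqref{eq:appA} to a single sign cancellation. Abbreviate $a=m-2j-k+2$ and $b=m-k+1$, so that $a+1=m-2j-k+3$, $b+k=m+1$, $b+k-a=2j+k-1$, and $a+b=2m-2j-2k+3$; then \eqref{eq:appA} reads
\[
F_{k+1}F_a^2+F_{b+k}F_b=F_{a+b}F_{b+k-a}+F_kF_aF_{a+1},
\]
and it suffices to show that the two differences
\[
\Delta_1:=F_{k+1}F_a^2-F_kF_aF_{a+1}\qquad\text{and}\qquad \Delta_2:=F_{b+k}F_b-F_{a+b}F_{b+k-a}
\]
satisfy $\Delta_1+\Delta_2=0$. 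Since $a$ and $k-a$ may be nonpositive, all Fibonacci identities from Section~\ref{sec:note} will be used in their full integer-index form, which is legitimate by Proposition~\ref{prop:fneg}.

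First I would handle $\Delta_1$: factoring out $F_a$ and applying d'Ocagne's identity (Proposition~\ref{prop:doca}) with $n=a$ and $m=k$ gives $F_{k+1}F_a-F_kF_{a+1}=(-1)^kF_{a-k}$, so $\Delta_1=(-1)^kF_aF_{a-k}$. Next I would attack $\Delta_2$ by expanding $F_{a+b}$, $F_{b+k}$, and $F_{b+k-a}$ with Proposition~\ref{prop:splitsum} along the split at $b$ (for instance $F_{a+b}=F_{a+1}F_b+F_aF_{b-1}$), multiplying out $F_{a+b}F_{b+k-a}-F_bF_{b+k}$, and simplifying the coefficients of $F_b^2$, $F_bF_{b-1}$, and $F_{b-1}^2$; each of these simplifications is itself an application of Proposition~\ref{prop:splitsum}, and the three coefficients collapse to $-F_aF_{k-a}$, $F_aF_{k-a}$, and $F_aF_{k-a}$ respectively. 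Hence $F_{a+b}F_{b+k-a}-F_bF_{b+k}=F_aF_{k-a}\bigl(F_{b-1}^2+F_{b-1}F_b-F_b^2\bigr)$, and since $F_{b-1}^2+F_{b-1}F_b-F_b^2=F_{b-1}F_{b+1}-F_b^2=(-1)^b$ by Catalan's identity (Proposition~\ref{prop:catalan} with $r=1$), we obtain $\Delta_2=-(-1)^bF_aF_{k-a}$.

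To finish, I would convert $F_{a-k}$ and $F_{k-a}$ into one another using Proposition~\ref{prop:fneg}, which turns both $\Delta_1$ and $\Delta_2$ into multiples of $F_aF_{k-a}$ with signs $(-1)^{a+1}$ and $(-1)^{b+1}$ respectively, so that $\Delta_1+\Delta_2=\bigl[(-1)^{a+1}+(-1)^{b+1}\bigr]F_aF_{k-a}$. Because $a+b=2m-2j-2k+3$ is odd, $a$ and $b$ have opposite parity, the bracket vanishes, and $\Delta_1+\Delta_2=0$, which is exactly \eqref{eq:appA}.

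The only substantive step is the reduction of $\Delta_2$: once the three coefficients in the expansion of $F_{a+b}F_{b+k-a}-F_bF_{b+k}$ are simplified, everything is forced, so the care is needed in that bookkeeping together with the closing parity observation; the remaining manipulations are direct applications of identities already recorded in Section~\ref{sec:note}.
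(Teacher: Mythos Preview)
Your argument is correct. The substitution $a=m-2j-k+2$, $b=m-k+1$ is well chosen, the computation of $\Delta_1$ via d'Ocagne is immediate, and the reduction of $\Delta_2$ checks out line by line: after expanding $F_{a+b}$, $F_{b+k}$, $F_{b+k-a}$ at $b$ via Proposition~\ref{prop:splitsum}, the three coefficients are indeed $-F_aF_{k-a}$, $F_aF_{k-a}$, $F_aF_{k-a}$, and $F_{b-1}^2+F_{b-1}F_b-F_b^2=F_{b-1}F_{b+1}-F_b^2=(-1)^b$ by Proposition~\ref{prop:catalan}. The parity closing step is clean.

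This is a genuinely different route from the paper's proof. The paper keeps the original indices $m,j,k$ throughout and works toward the right-hand side by repeatedly inserting the specific expansions $F_{m-2j-k+2}=F_{m-k}F_{-2j+1}+F_{m-k+1}F_{-2j+2}$, $F_{2m-2j-2k+3}=F_{m-2j-k+2}F_{m-k}+F_{m-2j-k+3}F_{m-k+1}$, $F_{2j+k-1}=F_{-2j+1}F_{k+1}-F_kF_{-2j+2}$, together with a four-term expansion of $F_{m+1}$; the identity then falls out after a long chain of regroupings. Your approach, by contrast, abstracts the index arithmetic into two parameters, splits the identity into two symmetric pieces $\Delta_1$ and $\Delta_2$, and reduces each to a single known identity (d'Ocagne, Cassini) before cancelling via the parity of $a+b$. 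The payoff is a shorter, more transparent argument that makes the mechanism of cancellation visible; the paper's computation, while heavier, has the minor advantage that the same expansion of $F_{m+1}$ is reused verbatim in the companion Proposition~\ref{app:simpB}.
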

\begin{proof}
Let $A$ be the left-hand side of~\eqref{eq:appA}.  We transform $A$ using Proposition~\ref{prop:splitsum} and the relations $F_{-n}=(-1)^{n+1}F_n$, $F_{m-2j-k+2}F_{m-k} +F_{m-2j-k+3}F_{m-k+1} = F_{2m-2j-2k+3}$ and $F_{2j+k-1} = F_{-2j+1}F_{k+1} - F_{k}F_{-2j+2}$. 
\[\begin{aligned}
A &= F_{k+1}F_{m-2j-k+2}(F_{m-k}F_{-2j+1}+F_{m-k+1}F_{-2j+2}) + F_{m+1}F_{m-k+1}\\[2mm]
&= F_{m-2j-k+2}F_{m-k}F_{-2j+1}F_{k+1}+F_{m-k+1}(F_{k+1}F_{m-2j-k+2}F_{-2j+2} + F_{m+1})\\[2mm]
&= F_{2m-2j-2k+3}F_{-2j+1}F_{k+1}-F_{m-2j-k+3}F_{m-k+1}F_{-2j+1}F_{k+1}+F_{m-k+1}(F_{k+1}F_{m-2j-k+2}F_{-2j+2} + F_{m+1})\\[2mm]
&= F_{2m-2j-2k+3}(F_{2j+k-1} + F_{-2j+2}F_{k})\\
& \qquad + F_{m-k+1}(-F_{m-2j-k+3}F_{-2j+1}F_{k+1}+F_{k+1}F_{m-2j-k+2}F_{-2j+2} + F_{m+1})\\[2mm]
&= F_{2m-2j-2k+3}F_{2j+k-1} + F_{2m-2j-2k+3}F_{-2j+2}F_{k}\\
& \qquad + F_{m-k+1}(-F_{m-2j-k+3}F_{-2j+1}F_{k+1}+F_{k+1}F_{m-2j-k+2}F_{-2j+2} + F_{m+1})\\[2mm]
\end{aligned}\]

We note that since 
\begin{multline}\label{mplus1}
F_{m+1} = F_{m-2j-k+3}F_{2j+k-1} + F_{m-2j-k+2}F_{2j+k-2}\\
=F_{m-2j-k+3}(F_{k+1}F_{2j-1} + F_k F_{2j-2}) + F_{m-2j-k+2}(F_{2j-2}F_{k+1} + F_{2j-3}F_k)
\end{multline}
 then 
\[\begin{aligned}
A 
&= F_{2m-2j-2k+3}F_{2j+k-1} + F_{2m-2j-2k+3}F_{-2j+2}F_{k}+F_{m-k+1}(F_{m-2j-k+3} F_k F_{2j-2} + F_{m-2j-k+2}F_{2j-3}F_k)\\[2mm]
&= F_{2m-2j-2k+3}F_{2j+k-1}+F_k \left[F_{-2j+2}( F_{2m-2j-2k+3}-F_{m-k+1}F_{m-2j-k+3} 
)+F_{m-k+1} F_{m-2j-k+2}F_{2j-3}\right]\\[2mm]
&= F_{2m-2j-2k+3}F_{2j+k-1}+F_k\left[ F_{-2j+2}F_{m-k}F_{m-2j-k+2}+F_{m-k+1} F_{m-2j-k+2}F_{2j-3}\right]\\[2mm]
&= F_{2m-2j-2k+3}F_{2j+k-1}+F_kF_{m-2j-k+2}( F_{-2j+2}F_{m-k}+F_{m-k+1} F_{3-2j})\\[2mm]
&= F_{2m-2j-2k+3}F_{2j+k-1}+F_kF_{m-2j-k+2}(F_{m-2j-k+3})\\[2mm]
\end{aligned}\]
\end{proof}

\begin{proposition}\label{app:simpB}
\begin{equation}\label{eq:AppSideB}
F_kF_{m-2j-k+3}^2 + F_{m+1}F_{m-k}  =F_{2j+k-2}  F_{2m-2j-2k+3}+ F_{k+1}F_{m-2j-k+3} F_{m-2j-k+2}
\end{equation}
\end{proposition}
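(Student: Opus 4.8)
Write $B$ for the left-hand side of~\eqref{eq:AppSideB}. The plan is to reproduce the argument of Proposition~\ref{app:simpA} with the indices shifted by one; concretely, three moves should do it.

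First I would use Proposition~\ref{prop:splitsum} together with $F_{-n}=(-1)^{n+1}F_n$ to expand one of the two factors $F_{m-2j-k+3}$ in the first summand of $B$ as $F_{m-2j-k+3}=F_{m-k+1}F_{-2j+3}+F_{m-k}F_{-2j+2}$, and to record the companion splittings $F_{2m-2j-2k+3}=F_{m-k+1}F_{m-2j-k+3}+F_{m-k}F_{m-2j-k+2}$ and a splitting of $F_{2j+k-2}$ into a combination of $F_k$ and $F_{k+1}$ times negative-index Fibonacci numbers. After this substitution $B$ becomes a sum of products, one of which is already the target term $F_{2j+k-2}F_{2m-2j-2k+3}$, with the rest organized around the factors $F_{m-k}$ and $F_{m-k+1}$.

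Second, I would replace the bare $F_{m+1}$ by $F_{m+1}=F_{m-2j-k+3}F_{2j+k-1}+F_{m-2j-k+2}F_{2j+k-2}$ and then expand $F_{2j+k-1}$ and $F_{2j+k-2}$ further, again by Proposition~\ref{prop:splitsum}, into combinations of $F_k$ and $F_{k+1}$ with $F_{2j-\bullet}$ terms, exactly as in the displayed identity~\eqref{mplus1}; every summand then carries a factor $F_k$ or $F_{k+1}$. Third, I would collect the $F_{k+1}$-part and the $F_k$-part separately: using the companion identity for $F_{2m-2j-2k+3}$ once more, together with the negative-index addition formulas, the $F_{k+1}$-part should telescope to $F_{k+1}F_{m-2j-k+3}F_{m-2j-k+2}$ and the $F_k$-part to $F_{2j+k-2}F_{2m-2j-2k+3}$, which together give~\eqref{eq:AppSideB}. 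The main obstacle is entirely one of bookkeeping: no individual step is deep, but one must choose the negative-index splittings so that the many intermediate products cancel in pairs, and apply $F_{m-k+1}F_{m-2j-k+3}+F_{m-k}F_{m-2j-k+2}=F_{2m-2j-2k+3}$ at exactly the right moments to keep the number of terms under control, just as in the proof of Proposition~\ref{app:simpA}.

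As a slightly shorter alternative, since Proposition~\ref{app:simpA} is already established one may subtract~\eqref{eq:appA} from~\eqref{eq:AppSideB}, reducing the claim to
\begin{multline*}
F_kF_{m-2j-k+3}^2 - F_{k+1}F_{m-2j-k+2}^2 - F_{m+1}F_{m-k-1}\\
= F_{k-1}F_{m-2j-k+3}F_{m-2j-k+2} - F_{2m-2j-2k+3}F_{2j+k-3},
\end{multline*}
which carries one fewer large product on each side; I would then prove this reduced identity by the same splitting-and-collecting strategy (expanding $F_{m+1}$ and $F_{2m-2j-2k+3}$ by Proposition~\ref{prop:splitsum} and simplifying), which should be marginally less error-prone.
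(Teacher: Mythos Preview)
Your plan is correct and is essentially the paper's own argument: the paper expands one copy of $F_{m-2j-k+3}$ as $F_{m-k+1}F_{-2j+3}+F_{m-k}F_{-2j+2}$, substitutes the companion splittings of $F_{2m-2j-2k+3}$ and $F_{2j+k-2}$, replaces $F_{m+1}$ via~\eqref{mplus1}, and then collects terms exactly as you describe. One small correction to your bookkeeping: after the substitution from~\eqref{mplus1} the $F_k$--terms cancel to zero (rather than assembling into $F_{2j+k-2}F_{2m-2j-2k+3}$, which has already been split off earlier), and it is the remaining $F_{k+1}$--terms that collapse, via $F_{-2j+2}F_{m-k+1}+F_{m-k}F_{1-2j}=F_{m-2j-k+2}$, to $F_{k+1}F_{m-2j-k+3}F_{m-2j-k+2}$.
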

\begin{proof}
We use the relations  $F_{m-2j-k+3} = F_{m-k+1}F_{-2j+3} + F_{m-k}F_{-2j+2}$, $F_{2m-2j-2k+3} = F_{m-2j-k+3}F_{m-k+1} + F_{m-k}F_{m-2j-k+2}$, and $F_{2j+k-2} = F_{-2j+3}F_{k} - F_{k+1}F_{-2j+2}$. Let $B$ equal the left-hand side of~\eqref{eq:AppSideB}.

\[\begin{aligned}
B &=  F_kF_{m-2j-k+3}(F_{m-k+1}F_{-2j+3} + F_{m-k}F_{-2j+2}) + F_{m+1}F_{m-k}\\[2mm]
&= F_kF_{m-2j-k+3}F_{m-k+1}F_{-2j+3} + F_{m-k}(F_kF_{m-2j-k+3}F_{-2j+2} + F_{m+1})\\[2mm]
&= F_kF_{-2j+3} (F_{2m-2j-2k+3}-F_{m-k}F_{m-2j-k+2})+ F_{m-k}(F_kF_{m-2j-k+3}F_{-2j+2} + F_{m+1})\\[2mm]
&= (F_{2j+k-2} +F_{k+1}F_{-2j+2} )F_{2m-2j-2k+3}+ F_{m-k}(F_kF_{m-2j-k+3}F_{-2j+2} + F_{m+1}-F_kF_{-2j+3}F_{m-2j-k+2})\\[2mm]
&= F_{2j+k-2}F_{2m-2j-2k+3}+ F_{k+1}F_{-2j+2} F_{2m-2j-2k+3}\\[1mm]
& \qquad + F_{m-k}(F_kF_{m-2j-k+3}F_{-2j+2} + F_{m+1}-F_kF_{-2j+3}F_{m-2j-k+2})\\[2mm]
\end{aligned}\]

We use \eqref{mplus1} again, to verify that 

\[\begin{aligned}
B 
&= F_{2j+k-2}  F_{2m-2j-2k+3}+ F_{k+1}F_{-2j+2} F_{2m-2j-2k+3} \\[1mm]
&\qquad +F_{m-k}(F_{m-2j-k+3}F_{k+1}F_{2j-1} + F_{m-2j-k+2}F_{2j-2}F_{k+1})\\[2mm]
&= F_{2j+k-2}  F_{2m-2j-2k+3}+ F_{k+1}F_{-2j+2} ( F_{m-2j-k+3}F_{m-k+1} + F_{m-2j-k+2}F_{m-k})\\[1mm]
& \qquad +F_{m-k}(F_{m-2j-k+3}F_{k+1}F_{2j-1} + F_{m-2j-k+2}F_{2j-2}F_{k+1})\\[2mm]
&= F_{2j+k-2}  F_{2m-2j-2k+3}+ F_{m-2j-k+3} ( F_{k+1}F_{-2j+2}F_{m-k+1} +F_{m-k}F_{k+1}F_{2j-1})\\[1mm]
& \qquad +F_{m-2j-k+2}(F_{k+1}F_{-2j+2} F_{m-k} +F_{m-k} F_{2j-2}F_{k+1})\\[2mm]
&= F_{2j+k-2}  F_{2m-2j-2k+3}+ F_{m-2j-k+3}F_{k+1} (F_{-2j+2}F_{m-k+1} +F_{m-k}F_{1-2j})\\[2mm]
&= F_{2j+k-2}  F_{2m-2j-2k+3}+ F_{k+1}F_{m-2j-k+3} F_{m-2j-k+2}\\[2mm]
\end{aligned}\]

\end{proof}
\end{appendices}
\bibliography{references}{}
\bibliographystyle{plain} %%use halpha instead of alpha, because it does a better job with arXiv refs.

\end{document}